\newcommand{\skl}[1]{(#1)}
\newcommand{\cc}{{\hspace{-0.8mm}\text{\scalebox{0.8}{$C$}}}}
\newcommand{\zz}{(\hspace{-0.4mm}z\hspace{-0.3mm})}
\newtheorem{theorem}{Theorem}
\newtheorem{corollary}[theorem]{Corollary}
\newtheorem{example}[theorem]{Example}
\newtheorem{assumption}[theorem]{Assumption}
\theoremstyle{definition}
\newtheorem{definition}[theorem]{Definition}
\newcommand{\al}{\alpha}
\newcommand{\R}{\mathbb R}
\newcommand{\XX}{\mathbb X}
\newcommand{\YY}{\mathbb Y}
\newcommand{\DD}{\mathbb D }
\newcommand{\EE}{\mathbb E }
\newcommand{\MM}{\mathbb M}
\newcommand{\NN}{\mathcal N}
\newcommand{\pr}{\mathbf{P}}
\newcommand{\Fo}{\mathbf{F}}
\newcommand{\Ro}{\mathbf R}
\newcommand{\Go}{\mathbf{G}}
\newcommand{\Ho}{\mathbf{H}}
\newcommand{\reg}{\mathcal{R}}
\newcommand{\tik}{\mathcal{T}}
\newcommand{\nn}{{\bm \Phi}}
\newcommand{\enorm}{\left\|\;\cdot\;\right\|}
\newcommand\norm[1]{\Vert#1\Vert}
\newcommand\set[1]{{\{#1\}}}
\newcommand\lset[1]{\left\{#1\right\}}
\newcommand\snorm[1]{\Vert#1\Vert}
\newcommand{\range}{\operatorname{Im}}
\DeclareMathOperator*{\argmin}{arg\,min}
\DeclareMathOperator{\id}{Id}
\DeclareMathOperator{\fix}{Fix}
\newcommand{\unet}{\mathbf{U}}
\newcommand{\signal}{x}
\newcommand{\data}{y}
\newcommand{\alstar}{{\al^*\skl{\delta,\data^\delta}}}
\newcommand{\deltadata}{_\alstar}
\colorlet{lred}{red!40}
\colorlet{lgreen}{green!40}
\colorlet{lblue}{blue!40}
\colorlet{lpurple}{purple!40}
\definecolor{lightbluee}{RGB}{181, 210, 247}
\definecolor{lightgreenn}{RGB}{132,228,132}
\definecolor{redd}{RGB}{215,30,30}
\author{Yoeri~E.~Boink}
\affil{Department of Applied Mathematics, University of Twente \authorcr
Multi-Modality Medical Imaging group, University of Twente\authorcr
Drienerlolaan 5, 7522 NB Enschede, Netherlands\authorcr
E-mail: {\tt y.e.boink@utwente.nl}}
\author{Markus~Haltmeier}
\affil{Department of Mathematics, University of Innsbruck\authorcr
Technikerstrasse 13, 6020 Innsbruck, Austria\authorcr
E-mail: {\tt markus.haltmeier@uibk.ac.at}}
\author{Sean Holman}
\affil{School of Mathematics, University of Manchester\authorcr
Oxford Road M13 9PL\authorcr
E-mail: {\tt sean.holman@manchester.ac.uk}}
\author{Johannes~Schwab}
\affil{Department of Mathematics, University of Innsbruck\authorcr
Technikerstrasse 13, 6020 Innsbruck, Austria\authorcr
E-mail: {\tt johannes.schwab@uibk.ac.at}}
\numberwithin{equation}{section}
\numberwithin{figure}{section}
\numberwithin{theorem}{section}
\date{\today}
\title{Data-consistent neural networks for solving nonlinear inverse problems}
\begin{document}
\maketitle

\begin{abstract}
Data assisted reconstruction algorithms, incorporating trained neural networks, are a novel paradigm for solving inverse problems. One approach is to first apply a classical reconstruction method and then apply a neural network to improve its solution. Empirical evidence shows that such two-step methods provide high-quality reconstructions, but they lack a convergence analysis. In this paper we formalize the use of such two-step approaches with classical regularization theory. We propose data-consistent neural networks that we combine with classical regularization methods. This yields a data-driven regularization method for which we provide a full convergence analysis with respect to noise. Numerical simulations show that compared to standard two-step deep learning methods, our approach provides better stability with respect to structural changes in the test set, while performing similarly on test data similar to the training set. Our method provides a stable solution of inverse problems that exploits both the known nonlinear forward model as well as the desired solution manifold from data.

\end{abstract}


\section{Introduction}
Let $(\XX, \enorm)$ and $(\YY, \enorm)$ be Banach
spaces \footnote{You can take $\XX$ and $\YY$ as finite dimensional spaces $\R^n$ and $\R^q$ with the Euclidian norm if you are not familiar with infinite dimensional Banach spaces.}  and let  $\Fo \colon \DD  \subseteq \XX \to \YY$
be a continuous, possibly nonlinear, mapping.
In this paper we study the stable solution of the  inverse
problem
\begin{equation} \label{eq:ip}
	\text{ Recover $x \in \DD$ from }  \Fo (\signal) = \data  \,,
\end{equation}
where $\data  \in \Fo(\DD)$ are exact data. We are especially interested in the noisy data case where data $\data^\delta \in \YY$ are given
with  $\norm{\data - \data^\delta} \leq \delta$.
For solving problems of this form we introduce and analyze
convergent regularization methods comprising deep neural networks.

An inversion method for exact data is a right inverse $\Go _0 \colon \Fo(\DD) \rightarrow \DD$ for  $\Fo$,
\begin{equation} \label{eq:inverse}
	\forall  y \in \Fo(\DD)  \colon \quad
	\Fo( \Go_0 (\data) ) =  y  \,.
\end{equation}
The inversion  method  $\Go_0$ therefore recovers
elements in $\Go_0\Fo(\DD) = \fix( \Go_0\Fo )$,
the set of fixed points of  $\Go_0\Fo \colon \DD \to \DD$.
We are mainly interested in the case where
\eqref{eq:ip} is ill-posed, where no continuous right inverse $\Go_0$
exists. If noisy data $\data^\delta \in \YY$ are given
with  $\norm{\Fo (\signal) - \data^\delta} \leq \delta$ for $x \in \Go_0\Fo(\DD)$,
then $\Go_0 (\data^\delta)$   is either not well  defined or arbitrary far away
from $x$. In this  case one has to apply  regularization methods to the data,
which are stable approximations to $\Go_0$ and continuous on all of $\YY$.

\subsection*{Background}
Well established regularization methods are   quadratic Tikhonov regularization \cite{engl1996regularization,Mor84,TikArs77} and
iterative regularization  \cite{BakKok04,KalNeuSch08}.
Both methods are  designed to approximate  minimum norm solutions
$\Go_0(\data) \in \argmin \set{\norm{\signal - \signal_0} \mid \Fo(\signal) =  y}$ with
fixed $\signal_0 \in \XX$.
However, for most applications minimum norm solutions are not the desired ones. One way to overcome  this issue is by convex variational regularization \cite{scherzer2009variational}, where one takes
\begin{equation} \label{eq:nett}
\Go_\alpha (y^ \delta)  \in
 \argmin \lset{ \frac{1}{2} \norm{\Fo (\signal)- \data^\delta}^2
 + \alpha \reg(\signal)  \mid x \in \DD } \,,
\end{equation}
which approximate $\reg$-minimizing solutions
$\Go_0(\data) \in \argmin \set{\reg(\signal) \mid \Fo(\signal) =  y}$. Here the
regularization functional  $\reg \colon \XX \to [0, \infty]$ incorporates a-priori information and takes the role of the norm.
There are still several challenges related  to  variational regularization
techniques. First,  computing  $\reg$-minimizing solutions requires time consuming  iterative minimization schemes. Second,
finding a regularization functional  that well models  solutions  of interest is a difficult issue. Typical choices such as total variation  or the
$\ell^q$-norm with respect to some frame enforce strong handcrafted prior assumptions that are often not met in  practical applications.
One solution for this problem was proposed in \cite{li2020nett,lunz2018adversarial}, where a regularization term in the form of a neural network is learned before it is applied in the classical setting.

Recently, several deep learning methods to solve inverse problems were proposed. Some approaches apply iterative methods, which alternate between data consitency steps and neural network updates, \cite{adler2017solving,chang2017one,kobler2017variational,sun2016deep,kofler2018u,boink2019partially}, while others aim for a fully learned reconstruction scheme \cite{zhu2018image,boink2019learned}. Further, a popular approach for imaging problems is using a neural network as a second step after some initial reconstruction. Several such post processing methods have been considered in the literature \cite{he2016deep,jin2017deep,kobler2017variational,rivenson2017deep}. 
In these two-step approaches, the reconstruction network takes the form $\Ro  = \nn  \Go $ where  $\Go \colon \YY \to \XX$
maps the data to the reconstruction space (reconstruction layer or backprojection) and $\nn \colon \XX \to \XX$ is a neural network (NN) whose free parameters are  adjusted to the training data. In particular, so called residual networks $\nn  = \id_{\XX}  +  \unet$, where only the residual part $\unet$ is trained  \cite{hamilton2018deep,jin2017deep,lee2017deep,antholzer2019deep}
show very accurate  results for solving inverse problems. Due to the huge amount of recent research in this field our discussion of related work is necessarily incomplete, although we have tried to mention the foundational works. 

In order to address the ill-posedness of linear inverse problems,
regularizing networks of the form $\Ro_\al  = \nn_\al  \Go_\al $ were introduced in \cite{schwab2018big,schwab2019deep}. Here
 $\Go_\al\colon \YY \to \XX$  defines any  regularization and
 $\nn_\al \colon \XX  \to \XX$ are trained neural networks
 approximating  data-consistent  networks. In the linear case these networks are called nullspace  networks because they only add parts in the kernel  of $\Fo$ as proposed in  \cite{mardani2017deep,schwab2019deep}. In this paper 
 we   derive convergence and convergence rates for data consistent network families $(\Ro_\alpha)_{\alpha >0}$ for nonlinear problems and provide some numerical examples.

\subsection*{Regularizing networks for nonlinear problems}
Let $\Go_0$ be any right inverse of $\Fo$ and $(\Go_\al)_{\al >0}$
a  regularization of $\Go_0$,  for example classical Tikhonov regularization. Let  $(\nn_\al)_{\al  >0}$
be a family of Lipschitz continuous mappings $\nn_\al \colon \XX \to \XX$.
In this paper we show that under suitable assumptions the reconstruction networks
\begin{equation} \label{eq:recnet}
	\Ro_\al  \coloneqq  \nn_\al   \Go_\al \colon \YY \to \XX \,,
\end{equation}
define a convergent regularization method. Additionally, we derive convergence rates (quantitative error estimates) for the reconstruction error.

A main condition for these results is that
$\nn_\al \Go_\al \Fo$  converges pointwise to a network of the form
  $\nn_0  \Go_0 \Fo$ where   $\nn_0 \colon \XX \to \XX$ is data-consistent in the sense  that $\Fo \nn_0  z  = \Fo z$   for $z \in  \Go_0 \Fo (\DD)$. The latter property implies that $\nn_0$  preserves data consistency of $\Go_0$, meaning that if $\Go_0(\data)$ is a solution of \eqref{eq:ip}, then $\nn_0 \Go_0(\data)$ is a solution of  \eqref{eq:ip} too. Hence the goal of the learned mapping $\nn_0$ is to improve solutions of the inverse problem, while keeping data-consistency of the initial solution. We prove that the family of reconstruction  networks $(\nn_\al \Go_\al)_{\al>0}$  is a convergent regularization method, and we provide convergence rates.

The benefits of $(\nn_\al \Go_\al)_{\al >0}$ over $(\Go_\al)_{\al >0}$
are twofold. First,  in the limit  $\al \to 0$, the network
$\nn_0  \Go_0 \Fo$ selects solutions  in  $\nn_0 \Go_0\Fo(\DD)$
that can be trained to better reflect the desired image class
than $\Go_0\Fo(\DD)$. Second, for   $\al > 0$, the networks $\nn_\al $ can be trained to undo the smoothing   effect of $\Go_\al \Fo $ and thereby  allow for obtaining  convergence rates  for less regular elements than the original regularization method $(\Go_\al)_{\al >0}$. The operator $\nn_0  \Go_0$ can be seen as a right inverse that is learned from a suitable class of training data.

\subsection*{Outline}
The paper is organized as follows. In section 2 we describe the regularization of nonlinear inverse problems and we define the proposed two-step data-driven regularization method. We introduce data-consistent networks, which allow to define the regularization method called \emph{regularizing networks}, which approximate a data-driven right inverse. We investigate under which assumptions these networks generate a convergent regularization method and we give examples of how such regularizing networks can be constructed. In section 3 we present a convergence analysis and derive convergence rates for the proposed method. Section 4 presents the mathematical description of the inverse problems considered in the numerical simulations. These simulations are explained in detail in section 5, after which results are shown in section 6. Additional simulation results can be found in the appendices. The paper concludes with a short summary of the established theory and the numerical simulations.

\section{Convergence of regularizing networks}
Throughout the rest of this paper, let $\Fo \colon \DD  \subseteq \XX \to \YY$ be  a continuous mapping between Banach spaces $\XX$ and $\YY$.
We study the stable solution  of the inverse problem \eqref{eq:ip}.
In this section we introduce the regularizing networks
and present the convergence analysis.

\subsection{Regularization of inverse problems}
Let $\Go_0 \colon \Fo(\DD)  \to \DD$ be any right inverse for $\Fo$.
If \eqref{eq:ip} is ill-posed and $\data^\delta$ are noisy
data with $\norm{\Fo (\signal) - \data^\delta} \leq \delta $,
then the reconstruction method $\Go_0 (\data^\delta)$ is unstable, meaning arbitrarily far away from $\Go_0 \Fo (\signal) $ or not defined.  To obtain meaningful approximations of $\Go_0 \Fo (\signal)$,  one has to apply regularization methods defined as follows.

\begin{definition}[Regularization method]\label{def:regularization}
Let $\skl{\Go_\al}_{\al >0}$ be a
family  of continuous mappings $\Go_\al  \colon \YY \to \XX$. If for all $x \in \Go_0 \Fo(\DD)$ there exists a parameter choice function $\al^* \colon \skl{0, \infty} \times \YY
\to \skl{0, \infty}$ such that
\begin{align*}
0 &=
\lim_{\delta\to 0}  \sup \set{ \al^*\skl{\delta, \data^\delta}
\mid
\data^\delta \in B_\delta (\Fo (\signal) )} \\
0 &= \lim_{\delta\to 0} \sup \set{\snorm{ x  -  \Go_{\al^*\skl{\delta, \data^\delta}} (\data^\delta)}
  \mid \data^\delta \in B_\delta (\Fo (\signal) ) } \,,
\end{align*}
where $B_\delta(\Fo(x))$ is the ball with radius $\delta$ around $\Fo(x)$, we call $(\skl{\Go_\al}_{\al >0}, \al^*)$ a regularization method for $\Go_\al$. If $(\skl{\Go_\al}_{\al >0}, \al^*)$
is a  regularization method for $\Go_0$, we call $\skl{\Go_\al}_{\al >0}$
a regularization of $\Go_0$ and $\al^*$ an admissible parameter choice.
\end{definition}

Probably the best known regularization is quadratic
Tikhonov regularization in Hilbert spaces \cite{engl1996regularization}.
Under the assumption that $\Fo$ is weakly sequentially closed, one shows that there exist solutions of \eqref{eq:ip} with minimal
distance  to a given point $\signal_0 \in \XX$ and that
  \begin{equation} \label{eq:tik}
	\tik_{\alpha, \data^\delta} (\signal)
	\coloneqq
	\frac{1}{2} \norm{\Fo (\signal)- \data^\delta}^2
	+ \frac{\alpha}{2}  \norm{ x- \signal_0 }^2
	\quad \text{ for  } x \in \DD
\end{equation}
has  at least one minimizer.
We can define  $\Go_\alpha (\data^ \delta) $ as any minimizer
of $\tik_{\alpha, \data^\delta}$. If the  solution of \eqref{eq:ip}
with minimal distance to $\signal_0$ is unique and denoted
by $ \Go_0 ( \data )$, then $(\skl{\Go_\al}_{\al >0}, \al^*)$
with a parameter choice satisfying  $\delta^2 / \al^* \to 0$  and $\al^* \to 0$ as $\delta \to 0$ is a  regularization method
for $\Go_0$  \cite{engl1996regularization,Mor84}.

Research indicates that solutions with minimal distance
to a fixed initial guess $\signal_0 \in \XX$  are too simple
in many applications. The use of non-quadratic penalties has demonstrated to often give better  results. Recently, deep learning methods showed outstanding performance. Here solutions are defined by a neural network that maps the given data  to a desired solution.

\subsection{Data-consistent networks}
The first ingredient for constructing regularizing two-step networks are data-consistent networks.
\begin{definition}[Data-consistent network]\label{def:data-consistent}
We call $\nn_0 \colon \XX \to \XX$ a  data-consistent network
if $\nn_0$ is Lipschitz continuous   and $ \forall z \in \Go_0\Fo(\DD) \colon
\Fo \nn_0 (z) = \Fo (z)$.
\end{definition}

In data-consistent networks, if $z \in \Go_0\Fo(\DD)$ is a solution
of \eqref{eq:ip}, then $\nn_0(z)$ is solution of
\eqref{eq:ip} too. In particular, $\nn_0 \Go_0$ is  a right
inverse for $\Fo$ with solution set $\nn_0 \Go_0 \Fo  (\DD)
= \fix(\nn_0 \Go_0 \Fo)$. Data-consistent networks can be constructed by
\begin{equation} \label{eq:consistent}
\nn_0(z)= \pr_{z,0} (\unet (z))\,,
\end{equation}
where $\unet \colon \XX \to  \XX$ is a Lipschitz continuous trained neural network, and
$\pr_{z,0}  \colon \XX \to \XX$ a Lipschitz continuous
mapping with $\pr_{z,0} (\signal) \in  \Fo^{-1}(\Fo(z))  = \{ x \in \DD \mid  \Fo(\signal) =  \Fo(z)\}$. The mapping $\pr_{z,0}$  can be  seen as a generalized  projection  on $\Fo^{-1}(\Fo(z))$. In the special case where $\Fo$ is a linear mapping, $\nn_0(z)$ can be chosen as $\nn_0(z) = z + \pr_{\text{ker}(\Fo)}\unet(z)$, where $\pr_{\text{ker}}$ is the projection on the kernel of $\Fo$ \cite{schwab2019deep}. 

\begin{definition}[Regularizing networks] \label{def:regnet}
We call $(\Ro_\alpha\colon  \YY \to \XX )_{\alpha > 0}$ defined by $\Ro_\alpha \coloneqq \nn_\al \circ \Go_\al$  a family of
regularizing networks if the following hold:
\begin{enumerate}[label=(R\arabic*), leftmargin=3em, topsep=0em, itemsep=0em]
  \item\label{r1} $(\Go_\al \colon \YY \to \XX )_{\al>0}$ is a regularization of $\Go_0$.
  \item\label{r2} $(\nn_\al \colon \XX \to \XX)_{\al>0}$  are
  uniformly $L$-Lipschitz continuous mappings.
  \item\label{r3}
   For some data-consistent network $\nn_0 \colon \XX \to \XX$ we have
  \begin{equation} \label{eq:convergence}
  \forall x \in \DD \colon \lim_{\al \to 0} \nn_\al \Go_\al \Fo (\signal) = \nn_0 \Go_0 \Fo (\signal) \,.
  \end{equation}
  \end{enumerate}
\end{definition}

In practice an important issue is to design networks that converge to a data-consistent limiting network as the noise level goes to zero. 
Next we give examples for a possible strategy to train such networks.

\begin{example}
Let $\signal_1, \dots ,\signal_N$ be training signals and $\data_i=\Fo(\signal_i)$ and respectively $\data_i^\delta$ the corresponding data. Further define the vectors of reconstructions $v\coloneqq(v_1, \dots, v_N)$ and $v^\al\coloneqq(v_1^\al, \dots, v_N^\al)$ where $v_i = \Go_0(\data_i)$ and $v_i^\al = \Go_\al(\data_i^\delta)$. The weights of the neural network are denoted by $\theta\in \Theta$. We write $\nn^\theta$ to depict a neural network with fixed architecture, whose weights have not yet been fixed.
\begin{enumerate}
\item One possible simple approach is to take the networks $\nn_\al \coloneqq \nn_0$ for all $\al>0$, where $\nn_0$ is the network obtained by minimizing the functional 
\begin{align}
\min_\theta \sum_i^N \norm{\nn^\theta(v_i)-\signal_i}^2+\mathcal{R}(\theta)\,.
\end{align}
Here $\mathcal{R}$ denotes some regularization functional for the weights $\theta$ that may be used to ensure a small Lipschitz constant. Clearly, since $\nn_0$ is Lipschitz continuous and $(\Go_\al)_{\al>0}$ is a regularization method, we have the desired limit in \ref{def:regnet} for all $x \in \DD$. Now if the data consistency is incorporated in the network architecture, the condition \ref{r3} is satisfied.
\item A more sophisticated approach is to choose the sequence of networks depending on the regularization parameter $\alpha$. Here the networks $\nn_\alpha$ are obtained by minimizing 
\begin{align*}
\min_\theta \sum_i^N \norm{\nn^\theta(v_i^\al)-\signal_i}^2+\mathcal{R}(\theta).
\end{align*}
To enforce the data consistency of the limiting network $\nn_0$ one could either choose the network architecture to be data-consistent, meaning $\forall \theta \in \Theta \ \forall \signal \in \DD\colon \Fo(\nn^\theta(\signal))=\Fo(\signal)$, or taking networks increasingly data-consistent of the form \begin{equation} \label{eq:consistent2}
\nn_\al(z)	= \pr_{z,\al} \unet_\al (z) \,.
\end{equation}
Here $\unet_\al \colon \XX \to \XX$ is a trained networks and $\pr_{z,\al}$ is a Lipschitz continuous mapping with  $\range ( \pr_{z,\al} ) \subseteq  E_{\al,z} \coloneqq \{x  \mid \norm{\Fo (\signal) -\Fo( z)} \leq r(\al)\}$
with $\lim_{\al \to 0} r(\al)  = 0$. Data-consistency is obtained in the limit. One example for $\pr_{z,\al}$ is the metric projection on $E_{\al,z}$ which is Lipschitz continuous if $E_{\alpha,z}$ is convex.  

Note that in \eqref{eq:consistent2} there are no restrictions on the particular choice of the architecture of the networks $\unet_\al$. 
\item Another network architecture guaranteeing data consistency is given by
\begin{equation}
\nn_\alpha(z) = \nn^{\rm dec}\left(\mathbf{S}_0+\alpha \mathbf{S}_1\right)\nn^{\rm enc}(z)\,.
\end{equation}
Here $\nn^{\rm enc}$ and $\nn^{\rm dec}$ denote an encoder and decoder network respectively, $\mathbf{S}_0$ denotes an $\alpha$-independent network and $\mathbf{S}_1$ denotes a network that is allowed to depend on $\alpha$. 
\end{enumerate}
\end{example}

\subsection{Convergence analysis}
\begin{theorem}[Regularizing networks]
Any family of regularizing networks $(\Ro_\al = \nn_\al \Go_\al)_{\al>0} $ (see Definition \ref{def:regnet}) is a regularization for $\nn_0 \Go_0$ in the sense of
Definition \ref{def:regularization}.
\end{theorem}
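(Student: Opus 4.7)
The plan is to recycle the admissible parameter choice from \ref{r1} as the parameter choice for $(\Ro_\al)_{\al>0}$, and then reduce the reconstruction error to an approximation term handled by \ref{r3} and a data-stability term handled by \ref{r1} together with the uniform Lipschitz bound \ref{r2}.

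First I would fix $x\in\nn_0\Go_0\Fo(\DD)$ and pick $\tilde x\in\DD$ with $x=\nn_0\Go_0\Fo(\tilde x)$; setting $z\coloneqq\Go_0\Fo(\tilde x)$ one has $x=\nn_0(z)$. Since $z\in\Go_0\Fo(\DD)$, data-consistency of $\nn_0$ (Definition \ref{def:data-consistent}) gives $\Fo(x)=\Fo(z)=\Fo(\tilde x)$, so $B_\delta(\Fo(x))=B_\delta(\Fo(z))$. Condition \ref{r1} provides an admissible parameter choice $\al^*$ for $(\Go_\al)_{\al>0}$ at the point $z$, and I would use this same $\al^*$ for $(\Ro_\al)_{\al>0}$. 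The first limit in Definition \ref{def:regularization} is then immediate from \ref{r1}.

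For the error, for any $y^\delta\in B_\delta(\Fo(x))$ I would use the decomposition
\begin{equation*}
\|x-\Ro_{\al^*}(y^\delta)\|
\;\le\;
\underbrace{\lnorm{\nn_0\Go_0\Fo(\tilde x)-\nn_{\al^*}\Go_{\al^*}\Fo(\tilde x)}}_{(A)}
+ L\,\underbrace{\lnorm{\Go_{\al^*}\Fo(\tilde x)-\Go_{\al^*}(y^\delta)}}_{(B)},
\end{equation*}
where the factor $L$ in front of $(B)$ comes from the uniform Lipschitz bound \ref{r2} on $\nn_{\al^*}$. Term $(A)$ vanishes as $\delta\to 0$ uniformly in $y^\delta$ by \ref{r3} applied at the fixed signal $\tilde x$, since \ref{r1} forces $\al^*(\delta,y^\delta)\to 0$ uniformly. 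Term $(B)$ I would split via the intermediate element $z$ into $\lnorm{\Go_{\al^*}\Fo(z)-z}$, the regularization error on exact data, and $\lnorm{z-\Go_{\al^*}(y^\delta)}$, which is precisely the quantity controlled uniformly by \ref{r1}.

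The main obstacle I foresee is the exact-data piece $\lnorm{\Go_{\al^*(\delta,y^\delta)}\Fo(z)-z}$: the uniform supremum in \ref{r1} literally controls $\Go_{\al^*(\delta,y^\delta)}$ evaluated at its own $y^\delta$, whereas here the operator is evaluated at $\Fo(z)$ with a parameter chosen from a \emph{different} $y^\delta$. My plan to resolve this is to exploit that $\Fo(z)=\Fo(x)$ belongs to every ball $B_\delta(\Fo(x))$, so instantiating \ref{r1} with the particular choice $y^\delta=\Fo(z)$ gives convergence $\Go_\al\Fo(z)\to z$ as $\al\to 0$; combined with the uniform smallness of $\al^*(\delta,y^\delta)$ provided by \ref{r1}, this upgrades to a uniform-in-$y^\delta$ bound and closes the proof. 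In the practically common setting of an a priori choice $\al^*=\al^*(\delta)$ (as in the Tikhonov example following Definition \ref{def:regularization}) the diagonal issue simply disappears and both summands follow directly from \ref{r1} and \ref{r3}.
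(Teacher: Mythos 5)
Your argument is essentially the paper's own proof: the same two-term decomposition into an approximation term controlled by \ref{r3} and a Lipschitz-times-data-stability term controlled by \ref{r1} and \ref{r2}, with the data term then split through the exact-data reconstruction $z=\Go_0\Fo(x)$, and your use of $\tilde x$ and the identity $\Fo(x)=\Fo(z)=\Fo(\tilde x)$ makes the terms coincide with the paper's one by one. The diagonal subtlety you flag for $\lnorm{\Go_{\al^*(\delta,\data^\delta)}\Fo(z)-z}$ is genuine but is equally present, and silently passed over, in the paper's proof (which asserts $\lnorm{\Go_{\al^*}\Fo(x)-\Go_0\Fo(x)}\to 0$ from the regularization property alone); as you note, it vanishes for a priori choices $\al^*=\al^*(\delta)$, while for a posteriori choices your proposed fix still needs convergence of $\Go_\al\Fo(z)$ along \emph{arbitrary} nets $\al\to 0$, which Definition \ref{def:regularization} does not literally supply.
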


\begin{proof}
Let $x \in \nn_0 \Go_0 \Fo (\DD)$, $\data^\delta  \in \YY$ with  $\norm{\Fo (\signal) - \data^\delta} \leq \delta$ and set $\signal_\al^\delta \coloneqq \nn_\al \Go_\al (\data^\delta)$. Then
\begin{align*}
\norm{x  -\signal_\al^\delta}
&= \norm{\nn_0 \Go_0 \Fo (\signal)  - \nn_\al \Go_\al \data^\delta}
\\&\leq
\norm{ \nn_0 \Go_0 \Fo (\signal) - \nn_\al \Go_\al \Fo (\signal)  }+
\norm{\nn_\al \Go_\al \Fo (\signal)   - \nn_\al \Go_\al \data^\delta}
\\&\leq
\norm{\nn_\al \Go_\al \Fo (\signal) - \nn_0 \Go_0 \Fo (\signal) }+
L \norm{\Go_\al \Fo (\signal)   - \Go_\al \data^\delta}\,.
\end{align*}
Now  if $\alstar$ is an admissible parameter
choice for $(\Go_\al)_{\al >0}$, then
\begin{multline}
 \sup \set{\snorm{x -  \Ro_{\al^*\skl{\delta, \data^\delta}} (\data^\delta)}
  \mid \data^\delta \in B_\delta (\Fo (\signal) ) }
  \leq \norm{\nn_\alstar \Go_\alstar \Fo (\signal) - \nn_0 \Go_0 \Fo (\signal) }
  \\
 +  L \sup \set{\snorm{  \Go_\alstar \Fo (\signal)  -  \Go_{\al^*\skl{\delta, \data^\delta}} \data^\delta}
  \mid \data^\delta \in B_\delta (\Fo (\signal) ) } \,.
\end{multline}
According to \ref{r3} in Definition \ref{def:regnet}, the first term converges to zero
and because of
\begin{align*}
    \snorm{  \Go_\alstar \Fo (\signal)  -  \Go_\alstar\data^\delta} \leq
    \snorm{  \Go_\alstar \Fo (\signal)  -  \Go_0 \Fo(\signal)}+\snorm{  \Go_0 \Fo (\signal)  -  \Go_{\alstar}\data^\delta}
\end{align*}
and the fact that
$(\Go_\al)_{\al >0}$ with $\al = \alstar$ is a regularization method for
 $\Go_0$, the second term converges to zero as $\delta  \to 0$.
\end{proof}

\section{Convergence rates}
Another important issue is the rate of approximation. This means specifically that there exists a decreasing function $f \colon  (0, \infty) \rightarrow (0, \infty) $ such that
$\lim_{\delta \rightarrow 0} f(\delta) = 0$ and
$\norm{ \Ro_{\al^*\skl{\delta, \data^\delta}} (\data^\delta) - \signal}  \leq f(\delta)$
uniformly for  all $\data^\delta \in \YY$ with
$\norm{\Fo(\signal) - \data^\delta} \leq \delta$.
%

\subsection{Reconstruction algorithms and convergence rates}
\begin{definition}[Reconstruction error of an algorithm]
 Let $\XX_0 \subseteq \XX$, $\delta >0$  and $\Go  \colon \YY \to \XX$ be a  reconstruction algorithm. We call
\begin{equation}
	\mathcal{E}( \Go, \delta , \XX_0) =
	\sup \set{ \snorm{ \signal - \Go (\data^\delta) }
	\mid \signal  \in \XX_0 \wedge \data^\delta \in \overline{B_\delta(\Fo(\signal))} }
\end{equation}
the  reconstruction error of $\Go$ over $\XX_0$.
\end{definition}

\begin{definition}[Convergence rate of an algorithm]
 Let $\XX_0 \subseteq \XX$, $r \in (0,1]$ and for
 any $\delta >0$, let $\Go^\delta $ be a reconstruction algorithm.
 We say that $(\Go^\delta)_{\delta>0}$
converges at rate $\delta^r$ over $\XX_0$ if
$\mathcal{E}( \Go^\delta, \delta , \XX_0)  = \mathcal O ( \delta^r )$
as $\delta \to 0$.
\end{definition}

The concept of convergence rates in particular applies for reconstruction algorithms defined by regularization methods. In general, no  convergence rate over $\Go_0\Fo(\DD)$ is possible; they require restricting to proper subsets $\XX_0 \subsetneq  \Go_0\Fo(\DD)$ \cite[Proposition 3.11]{engl1996regularization}.

Source conditions define suitable sets $\XX_0$ for classical Tikhonov regularization and related methods based on minimal norm solutions. We investigate the source conditions (transformed source sets) and convergence rates for regularizing networks where  $(\Go_\al)_{\al>0}$ is Tikhonov regularization in Example \ref{ex:tik}.

\subsection{Rates for the regularizing networks}
Our aim is to prove a convergence rate for $\Ro\deltadata$ assuming a convergence rate
for $\Go\deltadata$. Let $(\nn_\al \Go_\al)_{\al>0}$ be a regularizing network and $\al^*$ a parameter choice function. For any $\delta>0$ we define the reconstruction algorithms $\Go^\delta, \Ro^\delta\colon \YY \rightarrow \XX$ and $\nn^\delta\colon \XX\rightarrow \XX$ by
\begin{align*}
\begin{aligned}
\Go^\delta(z)&\coloneqq\Go_{\al^*(\delta,z)}(z),\\
\nn^{\delta,\data^\delta}(x)&\coloneqq\nn_{\al^*(\delta,\data^\delta)}(x),\\
\Ro^\delta(z)&\coloneqq \nn_{\al^*(\delta,z)}\Go_{\al^*(\delta,z)}(z),
\end{aligned}
\end{align*}
for $x\in \XX$ and $z\in\YY$.
\begin{assumption}[Convergence rate conditions]  \label{ass:rateN}
Let $\XX_0 \subseteq \Go_0 \Fo(\DD)$ satisfy the following for some $r \in (0,1]$
\begin{enumerate}[itemsep=0em, topsep=0em, label=(R\arabic*),leftmargin=3em]
\item \label{rateN1}
$\mathcal{E}( \Go^\delta, \delta ,\XX_0)  = \mathcal O ( \delta^r ) $
as $\delta \to 0$.
\item \label{rateN2}
$\sup \set{ \norm{\Go^\delta  ( \data^\delta )  -  \Go^\delta \Fo (x) } \mid x \in \XX_0
\wedge \data^\delta \in \overline{B_\delta (\Fo  (x)  )}}  =
\mathcal O ( \delta^r )  $.
\item \label{rateN3}
$\sup \set{\norm{\nn^{\delta,\data^\delta}(x)-\nn_0(x)}
\mid x \in \XX_0
\wedge \data^\delta \in \overline{B_\delta (\Fo  (x)  )}} = 
\mathcal O ( \delta^r ) $.
\end{enumerate}
\end{assumption}

The first condition \ref{rateN1} means that $(\Go\deltadata)_{\delta >0}$
converges at rate $\delta^r$. Condition \ref{rateN2} is a stability estimate for $\Go\deltadata$. Condition \ref{rateN3} gives a relation between $\nn\deltadata$, applied in noisy cases, and $\nn_0$, applied in the noiseless case.

\begin{theorem}[Convergence rate for regularizing networks]\label{thm:rate-rn}
Let $\MM_0 = \nn_0(\XX_0)$. Under Assumption~\ref{ass:rateN} we have
$\mathcal{E}( \Ro^\delta, \delta ,\MM_0)  = \mathcal O ( \delta^r )$.
\end{theorem}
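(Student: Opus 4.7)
I would prove the theorem by a direct triangle-inequality chain, exploiting the key observation that the data-consistency of $\nn_0$ lets me swap $\Fo(x)$ for $\Fo(z)$ whenever $x = \nn_0(z)$ with $z \in \XX_0 \subseteq \Go_0\Fo(\DD)$. This is what links the reference point in $\MM_0$ back to a reference point in $\XX_0$, where the hypotheses \ref{rateN1}--\ref{rateN3} are actually stated.

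First I would fix $x \in \MM_0$ and write $x = \nn_0(z)$ for some $z \in \XX_0$. For any $\data^\delta \in \overline{B_\delta(\Fo(x))}$, data-consistency gives $\Fo(x) = \Fo(\nn_0(z)) = \Fo(z)$, so in fact $\data^\delta \in \overline{B_\delta(\Fo(z))}$ as well. This is the crucial bookkeeping step; it is also essentially the only nontrivial use of the data-consistency of $\nn_0$ in the argument.

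Next I would split
\begin{align*}
\norm{x - \Ro^\delta(\data^\delta)}
&= \norm{\nn_0(z) - \nn^{\delta,\data^\delta}(\Go^\delta(\data^\delta))} \\
&\leq \norm{\nn_0(z) - \nn^{\delta,\data^\delta}(z)}
+ \norm{\nn^{\delta,\data^\delta}(z) - \nn^{\delta,\data^\delta}(\Go^\delta(\data^\delta))} \\
&\leq \norm{\nn_0(z) - \nn^{\delta,\data^\delta}(z)}
+ L\, \norm{z - \Go^\delta(\data^\delta)},
\end{align*}
using \ref{r2} (uniform $L$-Lipschitz bound on the family $\nn_\al$) in the last step. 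The first summand is $\mathcal O(\delta^r)$ directly by \ref{rateN3} applied at the point $z \in \XX_0$.

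For the second summand I would split once more:
\begin{equation*}
\norm{z - \Go^\delta(\data^\delta)}
\leq \norm{z - \Go^\delta \Fo(z)}
+ \norm{\Go^\delta \Fo(z) - \Go^\delta(\data^\delta)}.
\end{equation*}
The first term is bounded by $\mathcal{E}(\Go^\delta,\delta,\XX_0)$ (taking the degenerate choice $\data^\delta = \Fo(z) \in \overline{B_\delta(\Fo(z))}$), hence $\mathcal O(\delta^r)$ by \ref{rateN1}. The second term is $\mathcal O(\delta^r)$ by the stability assumption \ref{rateN2}, again using that $\data^\delta \in \overline{B_\delta(\Fo(z))}$. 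Taking the supremum over $x \in \MM_0$ and $\data^\delta \in \overline{B_\delta(\Fo(x))}$ yields $\mathcal{E}(\Ro^\delta,\delta,\MM_0) = \mathcal O(\delta^r)$, as desired. The only subtle point in the whole argument is the identification $\Fo(x) = \Fo(z)$ that transfers the noise ball on $\MM_0$ to the noise ball on $\XX_0$; everything else is pure triangle inequality plus Lipschitz continuity.
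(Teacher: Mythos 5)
Your proof is correct, and it takes a genuinely different---and in fact tidier---route than the paper's. The paper first inserts the intermediate point $\Ro^\delta\Fo(\signal)$ (the full noisy pipeline applied to exact data) and then expands $\snorm{\Ro^\delta\Fo(\signal)-\signal}$ through a further chain of intermediate points ($\nn_0\Go^\delta\Fo(z)$, $\nn^{\delta,\Fo(\signal)}\Go_0\Fo(\signal)$, $\nn_0\Go_0\Fo(\signal)$), ending with four summands and invoking \ref{rateN1} twice (once for $\snorm{\Go^\delta\Fo(z)-z}$ and once for $\snorm{\Go^\delta\Fo(\signal)-\Go_0\Fo(\signal)}$ after identifying $\Go_0\Fo(\signal)=z$). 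You instead peel off the network first, inserting $\nn^{\delta,\data^\delta}(z)$, so that \ref{rateN3} is applied immediately at $z\in\XX_0$ and the remainder reduces to $L\,\snorm{z-\Go^\delta(\data^\delta)}$, which one further split handles with a single application each of \ref{rateN1} and \ref{rateN2}. Both arguments hinge on exactly the same use of data-consistency, $\Fo(\signal)=\Fo(\nn_0(z))=\Fo(z)$, which transports the noise ball from $\MM_0$ back to $\XX_0$ so that the hypotheses of Assumption~\ref{ass:rateN} become applicable; you are right that this is the only non-formal step. A small bonus of your ordering: in your Lipschitz step the \emph{same} network $\nn^{\delta,\data^\delta}$ is evaluated at two points, whereas the paper's first inequality compares $\nn_{\al^*(\delta,\data^\delta)}$ with $\nn_{\al^*(\delta,\Fo(\signal))}$, i.e.\ networks at two different parameter values, so that step is not literally an application of the Lipschitz continuity of a single map and relies on the later comparison terms to absorb the discrepancy. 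Your decomposition avoids this wrinkle entirely and also yields the marginally better constant $1+2L$ in place of $1+4L$, though neither matters for the $\mathcal O(\delta^r)$ conclusion.
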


\begin{proof}
Let $\signal\in \MM_0$, $\norm{\Fo (\signal) -\data^\delta} \leq \delta$, and $z \in \XX_0$ s.t. $\nn_0(z)=\signal$.  Then
\begin{align*}
 \|  \Ro^\delta  (\data^\delta)  - \signal  \|
 &\leq
\snorm{ \Ro^\delta  (\data^\delta)  - \Ro^\delta\Fo (\signal)}
+ \snorm{ \Ro^\delta \Fo (\signal)- \signal}
\\
 &\leq
L \snorm{ \Go^\delta (\data^\delta)  -  \Go^\delta \Fo (\signal)}
+ \snorm{ \nn_0 \Go^\delta \Fo(z) - \nn_0(z)}
 \\&
 \hspace{0.08\textwidth} +
\snorm{ \Ro^\delta \Fo (\signal) - \nn_0 \Go^\delta \Fo(z)}
 \\
 & \leq 
L \snorm{ \Go^\delta  (\data^\delta)  -  \Go^\delta \Fo (\signal)}
+ L \snorm{ \Go^\delta \Fo(z) - z} \\&
 \hspace{0.08\textwidth} +
\snorm{ \nn^{\delta,\Fo(x)} \Go^\delta \Fo(\signal) - \nn_0 \Go^\delta \Fo(x)}
 \\
 & \leq 
L \snorm{ \Go^\delta  (\data^\delta)  -  \Go^\delta \Fo (\signal)}
+ L \snorm{ \Go^\delta \Fo(z) - z} \\&
 \hspace{0.08\textwidth} +
 \snorm{ \nn^{\delta,\Fo(\signal)} \Go^\delta \Fo(\signal) - \nn^{\delta,\Fo(\signal)} \Go_0 \Fo(\signal)}\\&
 \hspace{0.08\textwidth} + 
 \snorm{ \nn^{\delta,\Fo(\signal)} \Go_0 \Fo(\signal) - \nn_0 \Go_0 \Fo(\signal)}\\&
 \hspace{0.08\textwidth} +
 \snorm{ \nn_0 \Go_0 \Fo(\signal) - \nn_0 \Go^\delta \Fo(\signal)} 
 \\
 & \leq 
L \snorm{ \Go^\delta  (\data^\delta) - \Go^\delta \Fo (\signal)}
+ L \snorm{ \Go^\delta \Fo(z) - z} \\&
 \hspace{0.08\textwidth} +
 \snorm{ \nn^{\delta,\Fo(x)} \Go_0 \Fo(\signal) - \nn_0 \Go_0 \Fo(\signal)}
 + 2L \snorm{ \Go^\delta \Fo(\signal) - \Go_0 \Fo(\signal)}.
 \end{align*}
 
Each of the above terms are $\mathcal  O ( \delta^r )$: the first term due to the stability estimate \ref{rateN2}, the second term due to \ref{rateN1}, and the third term due to \ref{rateN3}. For the fourth term we use that $\Go_0 \Fo(\signal) = \Go_0\Fo(z)=\Go_0\Fo\Go_0\Fo(w)=\Go_0\Fo(w)=z\in\XX_0$ for some $w\in\DD$. This implies that the fourth term is $\mathcal  O ( \delta^r )$, due to \ref{rateN1} again.
\end{proof}

In the following we give an explicit example of a classical regularization method combined with a sequence of regularizing networks, where Assumption \ref{ass:rateN} is satisfied and therefore Theorem \ref{thm:rate-rn} can be applied.

\begin{example}[Regularizing networks combined with Tikhonov regularization]\label{ex:tik}
Given a G\^ateaux differentiable forward operator $\Fo$ we consider $(\Go_\al)_{\al >0}$ defined by  classical
Tikhonov regularization \eqref{eq:tik}, a data-consistent network $\nn_0 \colon \XX \to \XX$ and a sequence of regularizing networks $(\nn_\al\colon \XX \to \XX)_{\al>0}$ satisfying \ref{rateN3} for $r=1/2$.

\begin{corollary}
If we consider the set $\MM_0\coloneqq \nn_0(\XX_0)$ where $\XX_0$ is the source set of classical Tikhonov reguarlization and we assume that the networks $\nn_\alstar$ converge pointwise to $\nn_0$ on $\XX$ at rate $\mathcal O( \delta^{1/2} )$ as $\delta\rightarrow 0$, then $\mathcal{E}( \Ro^\delta, \delta ,\MM_0 )  = \mathcal O ( \delta^{1/2} )$
\end{corollary}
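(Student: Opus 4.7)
The plan is to apply Theorem~\ref{thm:rate-rn} with $r = 1/2$; the entire task therefore reduces to verifying the three items of Assumption~\ref{ass:rateN} on the Tikhonov source set $\XX_0$ and noting that the resulting rate transfers from $\XX_0$ to $\MM_0 = \nn_0(\XX_0)$ automatically through the statement of that theorem.

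Condition~\ref{rateN3} is essentially the hypothesis of the corollary. Since $\nn^{\delta,\data^\delta}(x) = \nn_{\alstar(\delta,\data^\delta)}(x)$, the assumed pointwise rate $\norm{\nn_{\alstar}(x) - \nn_0(x)} = \mathcal O(\delta^{1/2})$ on $\XX$ translates verbatim into \ref{rateN3}, uniformly in $x \in \XX_0$ and $\data^\delta \in \overline{B_\delta(\Fo(x))}$, because the stated rate is independent of the particular admissible value of $\alstar$.

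For \ref{rateN1} I would invoke the classical convergence-rate theorem for nonlinear Tikhonov regularization \cite{engl1996regularization}: with $\Fo$ G\^ateaux differentiable, the standard source condition defining $\XX_0$ (namely $x - x_0 \in \range(\Fo'(x)^*)$ together with the usual smallness assumption relating $\norm{\omega}$ to the nonlinearity constant of $\Fo$) and the a~priori parameter choice $\alstar \sim \delta$ give $\mathcal{E}(\Go^\delta, \delta, \XX_0) = \mathcal O(\delta^{1/2})$. For \ref{rateN2} I would route through the exact solution by the triangle inequality
\[
\norm{\Go^\delta(\data^\delta) - \Go^\delta \Fo(x)}
\leq \norm{\Go^\delta(\data^\delta) - x}
+ \norm{x - \Go_{\alstar(\delta,\Fo(x))}\Fo(x)},
\]
where the first term is $\mathcal O(\delta^{1/2})$ by \ref{rateN1} and the second is the noise-free Tikhonov rate at parameter $\alstar(\delta,\Fo(x)) \sim \delta$, again $\mathcal O(\delta^{1/2})$ under the same source condition.

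The main obstacle I anticipate is the bookkeeping in \ref{rateN2}: since $\alstar$ depends on the argument, the operators $\Go^\delta(\data^\delta)$ and $\Go^\delta \Fo(x)$ a priori use \emph{different} regularization parameters, so one cannot simply exploit Lipschitz continuity of a single $\Go_\al$. The triangle inequality above circumvents this by inserting the true solution $x$, reducing everything to two instances of the classical Tikhonov rate. Once \ref{rateN1}--\ref{rateN3} are established, Theorem~\ref{thm:rate-rn} yields $\mathcal{E}(\Ro^\delta, \delta, \MM_0) = \mathcal O(\delta^{1/2})$ without further work.
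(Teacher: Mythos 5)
Your proposal is correct and follows the same overall strategy as the paper: verify \ref{rateN1}--\ref{rateN3} for Tikhonov regularization on the source set $\XX_0$ and then invoke Theorem~\ref{thm:rate-rn} with $r=1/2$, with \ref{rateN1} coming from the classical rate theorem in \cite{engl1996regularization} and \ref{rateN3} being the hypothesis of the corollary. The one genuine difference is your treatment of \ref{rateN2}. The paper verifies it by appealing to a separate stability estimate for Tikhonov regularization from \cite{engl1996regularization} (after noting that $\Fo(x)=\Fo(z)$ for $x=\nn_0(z)\in\MM_0$, so the supremum over $\MM_0$ reduces to one over $\XX_0$). You instead insert the exact solution and bound
$\norm{\Go^\delta(\data^\delta)-\Go^\delta\Fo(x)}\leq\norm{\Go^\delta(\data^\delta)-x}+\norm{x-\Go^\delta(\Fo(x))}$,
and since $\Fo(x)\in\overline{B_\delta(\Fo(x))}$ is itself admissible ``noisy'' data, \emph{both} terms are bounded by $\mathcal{E}(\Go^\delta,\delta,\XX_0)$. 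This is a nice observation: it shows that \ref{rateN2} (as stated over $\XX_0$) is in fact implied by \ref{rateN1} alone, so no Tikhonov-specific stability estimate is needed; it also neatly sidesteps the bookkeeping issue you flag, namely that $\Go^\delta(\data^\delta)$ and $\Go^\delta(\Fo(x))$ use different regularization parameters. The paper's route, by contrast, makes explicit where the data-consistency $\Fo(\nn_0(z))=\Fo(z)$ enters. One minor caveat applying equally to both arguments: the corollary's hypothesis is phrased as a \emph{pointwise} rate for $\nn_\alstar\to\nn_0$, whereas \ref{rateN3} requires uniformity over $x\in\XX_0$ and $\data^\delta$; you at least acknowledge this, which is more than the paper does.
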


\begin{proof}
The convergence rate condition \ref{rateN1} holds according to \cite{engl1996regularization}. Since for $x= \nn_0(z) \in \MM_0$ we have $\Fo(x) = \Fo(z)$ (Definition \ref{def:data-consistent}) and because of the  stability estimate for Tikhonov regularization \cite{engl1996regularization} we have \begin{multline}
\sup \set{ \norm{\Go^\delta  ( \data^\delta )  -  \Go^\delta (\Fo  (x)) } \mid x \in \MM_0
\wedge \data^\delta \in \overline{B_\delta (\Fo  (x)  )}}   \\
=
\sup \set{ \norm{\Go^\delta  ( \data^\delta )  -  \Go^\delta (\Fo  (z)) } \mid z \in \XX_0
\wedge \data^\delta \in \overline{B_\delta (\Fo  (x)  )}}
  = \mathcal O( \delta^{1/2} ) \,,
\end{multline}
which shows \ref{rateN2}.
Finally \ref{rateN3} holds by assumption and therefore the conditions of 
Theorem~\ref{thm:rate-rn} are satisfied for $r=1/2$.
\end{proof}
\end{example}

This shows one of the main benefits of the concept of regularizing networks, namely  transforming the set $\XX_0$ on which the basic regularization converges at a certain rate, to a different data dependent set $\nn_0(\XX_0)$ with possibly less regularity, while preserving the convergence rate.

\section{Considered inverse problems}\label{sec:gen_IP}
In this section we provide the general mathematical description of inverse problems that are considered in the simulation experiments. We define nonlinear mappings $\Fo$ and derive formulations of right inverses $\Go_0$ and data-consistent networks $\nn_0$. 

\subsection{Projection on convex set}\label{sec:projection}
We consider the nonlinear inverse problem, where $\Fo:=\pr_C\colon\DD\to C$ is a metric projection on a closed convex set $C\subseteq\DD$, i.e.
\begin{align}\label{eq:proj}
\data = \pr_C(x):=\argmin_{\bar{x}\in C} \lset{\norm{\bar{x}-x}}\,.
\end{align}
The affine normal cone to $C$ at $x$ is defined as
\begin{align*}
\NN_C(x):=\lset{\tilde{x}\in\DD~|~\pr_C(\tilde{x})=x}\,.
\end{align*}
It is easily shown that any mapping that maps $x\in C$ to an element in the normal cone $\NN_C(x)$ is a right inverse of $\pr_C$. In particular, the projection $\pr_{\NN_C(x)}\colon\DD\to\DD$,
\begin{align}\label{eq:projcone}
\pr_{\NN_C(x)}(\hat{x}):=\argmin_{\tilde{x}\in \NN_C(x)\cap \DD} \lset{\norm{\tilde{x}-\hat{x}}},
\end{align}
combined with any function $\Ho_0\colon C\rightarrow \DD$ defines a right inverse $\Go_0\colon C \rightarrow \DD\colon \signal\mapsto \pr_{\NN_C(x)}(\Ho_0(x))$. This follows because $\forall x \in C\colon \pr_C(\pr_{\NN_C(x)}(\Ho_0(x)))=x$. We assume, that \eqref{eq:projcone} is well defined for all $x\in C$ and $\hat{x}\in\DD$. According to Definition \ref{def:data-consistent}, a data-consistent network $\nn_0$ satisfies $\forall z\in\Go_0\Fo(\DD)\colon\Fo\nn_0(z)=\Fo(z)$. We define 
\begin{align}\label{eq:proj_data_inv}
\nn_0(z) := \pr_{\NN_C(\pr_C(z))}(\unet(z)),
\end{align}
so this requirement is satisfied. Here $\unet \colon \XX \to  \XX$ is any Lipschitz continuous trained neural network (c.f. definition \ref{def:data-consistent}). See Figure \ref{fig:projection} for a visual illustration.

\begin{figure}[!ht]
    \centering
    \resizebox{0.5\textwidth}{!}{%
    \begin{tikzpicture}
    \draw[fill=lightbluee, line width=1.5] (0:2 cm) -- (72:2 cm) -- (144:2 cm) -- (216:2 cm) -- (288:2 cm) -- cycle;

\draw [top color=lightgreenn, bottom color=white, shading angle=90, draw=none] ([shift=(0:2cm)] -36:3cm ) -- ([shift=(0:2cm)] 0:0cm ) -- ([shift=(0:2cm)] 36:3cm ) -- cycle;
\draw [line width=1.5] ([shift=(0:2cm)] -36:3cm ) -- ([shift=(0:2cm)] 0:0cm ) -- ([shift=(0:2cm)] 36:3cm );

\node[scale=2] at (0,0) {$C$};
\node[scale=1.7] at (5.4cm,0cm) {$\mathcal{N}_{\hspace{0.5mm}\cc}(\hspace{-0.4mm}\mathbf{P}_\cc\zz\hspace{-0.3mm})$};
\node [fill=redd, draw=black, shape=circle, inner sep=1.45pt] at (3.6cm,0.6cm) {};
\node[scale=1.4] at (3.85cm,0.65cm) {$z$};
\node [fill=redd, draw=black, shape=circle, inner sep=1.55pt] at (2.0cm,0.0cm) {};
\node[scale=1.4, rotate=38] at (2.35cm,0.68cm) {$\mathbf{P}_\cc\zz$};
\node [fill=redd, draw=black, shape=circle, inner sep=1.55pt] at (3.2cm,-1.6cm) {};
\node[scale=1.4] at (3.25cm,-2.0cm) {$U\zz$};
\node [fill=redd, draw=black, shape=circle, inner sep=1.55pt] at (3.54cm,-1.13cm) {};
\node[scale=1.4] at (5.3cm,-1.0cm) {$\nn_0\zz$};
\draw[redd,dashed, line width=1.0] (3.6cm,-0.3cm) arc (-60:60:0.5cm);
\draw[redd,dashed, line width=1.0, ->, >=latex] (3.685cm,-0.245cm) arc (120:195:1.10cm);
\draw[redd, line width=1.0, ->, >=latex] (3.26cm,-1.51cm) -- (3.47cm,-1.22cm);
    \end{tikzpicture}}
    \caption{Visualization of the data-consistent network for the projection problem. It can be seen that $\pr_C(\nn_0(z)) = \pr_C(z)$, as required by the definition of a data-consistent network.}
    \label{fig:projection}%
\end{figure}
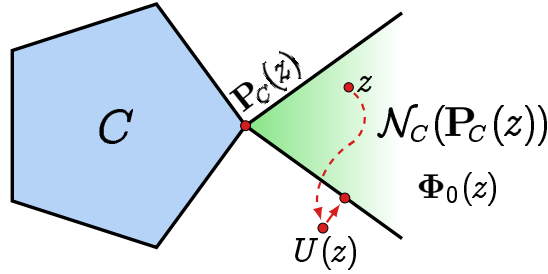

\subsection{Composition of mappings}\label{sec:composition}
As a second inverse problem, we consider the mapping $\Fo\colon\DD\to\YY$ that is defined as a composition of two (possibly nonlinear) mappings:
\begin{align*}
&\Fo(x):=\Fo_2(\Fo_1(x)), \quad \text{ where }\Fo_1\colon\DD\to\EE \text{ and }\Fo_2\colon\EE\to\YY\,,
\end{align*}
where $\EE$ is a Banach space. Furthermore we impose the restriction that the mapping $\Fo_2$ provides a data-consistent network that can be written as a projection
\begin{align*}
\nn^{(2)}_0(z) := \pr_{S\cap \range(\Fo_1)}(\unet_2(z)),
\end{align*}
where $S\subseteq\EE$. In particular, this is true for the mapping described in Section \ref{sec:projection}, when the projection on a normal cone also maps into the range of the operator $\Fo_1$. The projection onto the intersection in the data-consistent network can be implemented by an alternating projection algorithm \ref{sec:sat_rad}. If we assume that $S\cap\range(\Fo_1)\neq\emptyset$, then we define the data-consistent network $\nn_0$ for the full mapping as
\begin{align}\label{eq:conc_data_inv}
\nn_0(z) = \nn_0^{(1)}\Go_0^{(1)}\nn_0^{(2)}\Go_0^{(2)}\Fo_2\Fo_1(z)
\end{align} 
where $\nn_0^{(i)}$ and $\Go_0^{(i)}$ are defined as the data-consistent network for $\Fo_i$ and the right inverse for $\Fo_i$ respectively. We check the data-consistent property (Definition \ref{def:data-consistent}) by
\begin{align*}
\Fo\nn_0(z) &= \Fo_2\Fo_1\nn_0^{(1)}\Go_0^{(1)}\nn_0^{(2)}\Go_0^{(2)}\Fo_2\Fo_1(z)\\
			&= \Fo_2\Fo_1\Go_0^{(1)}\nn_0^{(2)}\Go_0^{(2)}\Fo_2\Fo_1(z)\\
			&= \Fo_2\nn_0^{(2)}\Go_0^{(2)}\Fo_2\Fo_1(z)\\
			&= \Fo_2\Go_0^{(2)}\Fo_2\Fo_1(z)\\
			&= \Fo_2\Fo_1(z)\\
			&= \Fo(z),\\
\end{align*}
where we used in order: the data-consistent property of $\nn_0^{(1)}$; the definition of a right-inverse $\Go_0^{(1)}$ in combination with the projection on the range of $\Fo_1$; the data-consistent property of $\nn_0^{(2)}$; and the definition of the right-inverse $\Go_0^{(2)}$.

We note that this is not the only data-consistent network possible for such an inverse problem: one could also design a network that only makes use of either $\nn_0^{(1)}$ or $\nn_0^{(2)}$. However, \eqref{eq:conc_data_inv} provides a network that is intuitively clear: an initial solution $z$ is obtained by a classical regularization method, after which first a better `guess' is made by applying a neural network on $\EE$, followed by a neural network that makes a better guess on the reconstruction space $\DD$, while keeping the solutions data-consistent throughout.

\section{Simulation experiments}
In this section, we first specify two examples of the inverse problems described in section \ref{sec:gen_IP}. After that, we explain all neural networks that will be compared for these examples, among which are the derived data-consistent networks. Finally, we provide the implementation details for all simulation experiments.

\subsection{Spatially dependent saturation of multivariate Gaussians}\label{sec:sat_gauss}
In the first simulation experiment, we consider the inverse problem of recovering images of multivariate Gaussians which have been nonhomogeneously saturated. Formally we define the domain $\DD=\XX:=\ell^2(\Omega)$ and we define the saturation mapping as a projection on a convex set, as described in section \ref{sec:projection}. This means $\Fo(x) :=\pr_C(x)$, where 
\begin{align*}
C:=\lset{x\in\ell^2(\Omega)~|~x(r)\leq M(r),~~\forall r\in\Omega,}
\end{align*}
where $M(r)\geq0$ is the saturation value at location $r$. The corresponding right inverse for $y\in C$ is defined as $\Go_0(y) = \pr_{\NN_C(y)}(y) = \id(y)$. The projection defined by \eqref{eq:proj} is explicitly given by
\begin{align*}
\big[\pr_C(x)\big](r):= \min\lset{x(r), M(r)}.
\end{align*}
Since $\NN_C(\pr_C(z))=\lset{x~|~\pr_C(x)=\pr_C(z)}$, \eqref{eq:proj_data_inv} can be written pointwise as 
\begin{align}\label{eq:data_cons_satur}
\big[\nn_0(z)\big](r) = 
\begin{cases}
z(r)	&\text{for }z(r)<M(r),\\
\max\lset{\big[\unet(z)\big](r),M(r)} &\text{for }z(r)\geq M(r).
\end{cases}
\end{align}
We consider the square domain $\Omega:=[-1,1]\times[-1,1]$. The spatially dependent saturation function is defined 
\begin{align}\label{eq:saturation}
M(r) := \begin{cases}0.6 &\text{if }\norm{r}\leq \frac12,\\
					0 	&\text{if }\norm{r}>\frac12. \end{cases}
\end{align}

Each image in the training or test set contains one centered multivariate Gaussian with diagonal covariance matrix, having standard deviations $(\sigma_1,\sigma_2)$ independently randomly chosen in the interval $[0.24,0.32]$. All images in the training and test set are scaled to obtain maximum values randomly chosen in the interval $[0.75,1]$. Opposed to standard networks, one of the benefits of using a data-consistent network is that it is more robust to changes in the data. For this reason, a modified test set has been created, where the Gaussians have standard deviations in the interval $[0.12,0.20]$ with maximum intensities in the interval $[0.6,0.8]$. For the numerical implementation we consider the discretized domain $\bar\Omega:=\R^{128\times128}$ as discretization of $\Omega$.

The data-consistent network $\nn_0(z)$, as described in \eqref{eq:data_cons_satur}, is compared with the neural network $\unet(z)$ without data-consistency. We compare reconstruction quality for both the regular test set and modified test set. A description of the neural network architecture and training details are provided in section \ref{sec:implementation}.

\subsection{Saturation of Radon transformed human chest images}\label{sec:sat_rad}
In the second simulation experiment, we consider the inverse problem of reconstructing images of the human chest, from saturated and highly limited angle Radon measurements. We consider the composition of two mappings as described in section \ref{sec:composition}, where $\Fo_1$ is a linear mapping that acts as the discrete Radon transform and $\Fo_2$ is a nonlinear saturation mapping that saturates the Radon signals at a constant value $M$. For conciseness, we define our domains and mappings in the discretized setup: $\DD:=\R^{n_x\times n_x}$, $\YY=\R^{n_\alpha\times\frac{3}{2}n_x}$, where $n_x=192$ is the number of pixels in each direction of the image and $n_\alpha = 8$ is the number of angles in the Radon transform, uniformly sampled in the interval $[0,\pi]$. 

We now define all elements that are needed to obtain the data-consistent network \eqref{eq:conc_data_inv}, which we repeat here for completeness:
\begin{align}
\nn_0(z) = \nn_0^{(1)}\Go_0^{(1)}\nn_0^{(2)}\Go_0^{(2)}\Fo_2\Fo_1(z).
\end{align} 
A matrix representation $\Fo_1\in\R^{n_\alpha\cdot\frac{3}{2}n_x\times n_x^2}$ of the Radon transform is obtained as described in \cite{lewitt1990multidimensional}. Its right inverse is taken as the pseudo-inverse of $\Fo_1$, i.e. $\Go_0^{(1)}:= \Fo_1^\dag$. Since the mapping is linear, the corresponding data-consistent network is a null-space network \cite{schwab2019deep}, i.e. $\nn_0^{(1)}(z) = z + \pr_{\text{ker}(\Fo_1)}\unet(z)$, where $\pr_{\text{ker}(\Fo_1)} = \id - \Fo_1^\dag\Fo_1$. The saturation mapping $\Fo_2=\pr_C$, its right inverse $\Go_0^{(2)}$ and the data-consistent network $\nn_0^{(2)}$ are chosen as described in section \ref{sec:sat_gauss}, this time with constant saturation level $M=8$. Finally, for this particular choice of $\Fo_2$, the projection on the intersection of convex sets reads $\pr_{\NN_C(\pr_C(z))\cap\range(\Fo_1)}$, which can be achieved by the `projection onto convex sets' (POCS) algorithm \cite{bauschke1996projection}: by alternatingly performing $\pr_{\NN_C(\pr_C(z))}$ and $\pr_{\range(\Fo_1)} = \Fo_1\Fo_1^\dag$, the resulting iteration converges linearly to a point on the intersection.

Training and test images were obtained from the LoDoPaB-CT dataset \cite{leuschner2019lodopab}, which on its turn makes use of the LIDC/IDRI dataset \cite{armato2011lung}. In our work, we only make use of the high quality CT reconstructions in the LoDoPaB-CT dataset that we use as `ground truth' for our setup. The images are scaled to $192\times192$ pixels, after which the mappings $\Fo_1$ and $\Fo_2$ are applied to obtain simulated sinograms. After that, pseudo-inverses $\Go_0^{(2)}$ and $\Go_0^{(1)}$ are applied to obtain the input for our data-consistent network. For this simulation experiment we have also created a modified test set to investigate how the trained networks generalize towards slightly modified data. For conciseness, the procedure to get from the regular test data to the modified test data is not explained in full detail. In short, the test set consists of images in the range of $\Fo_1^\dag$ that produce sinograms that have a maximum below or around the saturation level. This means that the saturation mapping $\Fo_2$ will not have a big effect on the unsaturated sinograms. Images in the modified test set look very similar to the ones in the regular test set, but they often show a small gradient at locations where the regular images show a piecewise constant structure. Some samples from the modified test set are shown in Figure \ref{fig:modified-} and appendix \ref{app:modified}.

The data-consistent network $\nn_0(z)$, as described in \eqref{eq:conc_data_inv}, is compared with two other networks: the first one applies a single neural network to the pseudo-inverse reconstruction; the second one first applies a neural network in the sinogram domain, then applies the pseudo-inverse of the Radon-transform, followed by a neural network in the image domain. For completeness, we summarize the three networks below:

\begin{itemize}
\item \makebox[43mm]{One neural network:\hfill} $\mathbf{N}_1(z) = \unet_1(z).$
\item \makebox[43mm]{Two neural networks:\hfill} $\mathbf{N}_2(z) = \unet_1\Go_0^{(1)}\unet_2\Fo_1(z).$
\item \makebox[43.4mm]{Data-consistent network:\hfill} $\nn_0(z) = \nn_0^{(1)}\Go_0^{(1)}\nn_0^{(2)}\Go_0^{(2)}\Fo_2\Fo_1(z).$
\end{itemize}
We emphasize that the data-consistent networks, in terms of architecture, make use of the same neural networks $\unet_1$ and $\unet_2$ as the first two networks, i.e. $\nn_0^{(1)}(z) = z + \pr_{\text{ker}(\Fo_1)}\unet_1(z)$ and $\nn_0^{(2)}$ makes use of $\unet_2$ as defined in \eqref{eq:data_cons_satur}. A more detailed description of the neural network architecture and training details are provided in section \ref{sec:implementation}.

Ideally, the data-consistent network is trained `end-to-end', meaning that both $\unet_1$ which is used in $\nn_0^{(1)}$, and $\unet_2$ which is used in $\nn_0^{(2)}$, are trained at the same time. However, the application of the POCS algorithm is computationally intensive, since it requires iterative application of the mappings $\Fo_1$ and $\Fo_1^\dag$. For this reason, we have chosen to first train $\nn_0^{(2)}$ to output the unsaturated sinogram, then perform the POCS algorithm and finally train $\nn_0^{(1)}$ to output the reconstructed image.

\subsection{Neural network architecture and training details}\label{sec:implementation}
In this work, the popular U-Net \cite{ronneberger2015unet, jin2017deep} is implemented as a neural network. By using standard nonlinearities such as rectified linear units (ReLUs) and convolutions, the network is Lipschitz continuous, which is a requirement as described in Definition \ref{def:data-consistent}. The Lipschitz constant can be controlled by weight regularization, such as adding an $L^2$-loss on the weights in the loss function.

\begin{table}[!ht]
\begin{center}
\def\arraystretch{1.05}
\scalebox{0.98}[1.0]
{\begin{tabularx}{1.02\textwidth}{l *{3}{X}}
\toprule			 
							& Exp. 1 $(\unet/\nn_0)$:	& Exp. 2 $(\unet_1/\nn_0^{(1)})$: 	& Exp. 2 $(\unet_2/\nn_0^{(2)})$:   \\
							& image domain				& image domain						& sinogram domain 					\\
\midrule
$\#$training samples		& 1024						& 35584								& 35584								\\ 
$\#$validation samples		& 256						& 3522								& 3522								\\ 
$\#$test samples			& 1024						& 3553								& 3553								\\ 
\midrule
depth						& 4							& 4									& 4									\\ 
width						& 2							& 2									& 2									\\ 
$\#$channels in top layer 	& 8							& 16								& 16								\\
convolution size 			& $3\times3$				& $3\times3$						& $3\times3$						\\
nonlinearity				& ReLU						& ReLU								& ReLU								\\
\midrule
start learning rate			& $10^{-3}$					& $10^{-3}$							& $10^{-3}$							\\
final learning rate			& $10^{-4}$					& $2\cdot10^{-4}$					& $2\cdot10^{-4}$					\\
batch size					& 64						& 32								& 32								\\ 
$\#$epochs					& 1000						& 25								& 25								\\ 
\bottomrule
\end{tabularx}}
\end{center}
\caption{U-Net parameter details for all simulation experiments.}
\label{tab:parameters}
\end{table}

The U-Net was implemented as described in \cite{jin2017deep}, although for each experiment some parameters were chosen slightly different to obtain optimal results. For all experiments, the network has a `depth' of four, meaning four times max-pooling and upsampling. The U-Nets in the image domain perform the regular max-pooling and upsampling in two directions, while the U-Net in the sinogram domain performs these only in one direction, leaving the number of angles constant at 8. This is done because the neighboring angles in the sinogram show very little resemblance to each other and there are only 8. The `width', or the amount of convolutions at every depth is chosen to be two. As in \cite{jin2017deep} the number of convolution channels doubles after each max-pooling; the number of channels at the start is stated in Table \ref{tab:parameters}, since this was chosen differently for every simulation experiment. In all experiments a residual structure that is also apparent in \cite{jin2017deep} is used. The U-Net uses $3\times3$ convolutions with biases and applied a ReLU-activation after each convolution, except the last one. In all experiments, an $L^2$-loss function on the difference between output and ground truth is minimized. For optimization, the ADAM optimizer with exponentially decaying learning rate is chosen. The learning rates and batch sizes are stated in Table \ref{tab:parameters}. 

\section{Numerical results}
In this section, the reconstruction quality of data-consistent networks is compared with the U-Nets that are not data-consistent. Besides visual comparison, the quality will be compared by means of peak signal-to-noise ratio (PSNR) and structural similarity (SSIM). For both experiments this will be done for the regular test set as well as the modified test set in order to investigate the generalization capacity of the networks. 

\subsection{Spatially dependent saturation of multivariate Gaussians}
Multivariate Gaussians are saturated with a spatially dependent saturation function \eqref{eq:saturation}, as described in section \ref{sec:sat_gauss}. Results for one sample from the regular test set are shown in Figure \ref{fig:regular}. Here it can be seen that both U-Net and the data-consistent network provide a very accurate reconstruction. This is also reflected in the PSNR and SSIM values shown in Table \ref{tab:quality_gaussians}. The pseudo-inverse reconstruction, which in this case is just the measurement, is not a good one, since a lot of information is lost by applying the saturation mapping.

\begin{figure}[!ht]
\centering
\begin{subfigure}[t]{0.21\textwidth}
\centering
\captionsetup{justification=centering}
\includegraphics[width=0.8\textwidth]{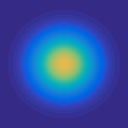}
\includegraphics[width=\textwidth, trim = {61 0 61 0}, clip]{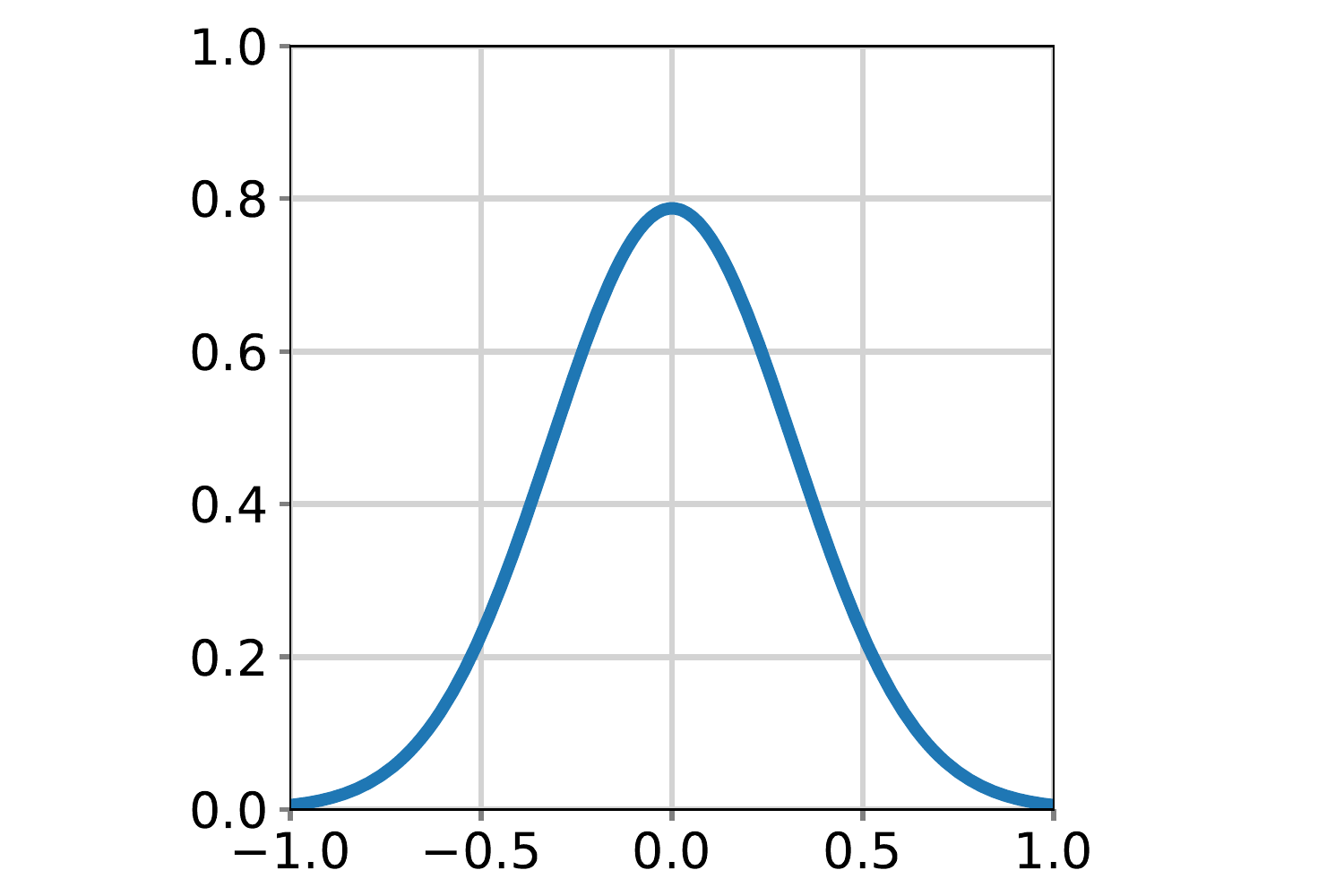}
\caption{Ground truth}
\label{fig:regular-x}
\end{subfigure}
\begin{subfigure}[t]{0.21\textwidth}
\centering
\captionsetup{justification=centering}
\includegraphics[width=0.8\textwidth]{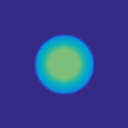}
\includegraphics[width=\textwidth, trim = {61 0 61 0}, clip]{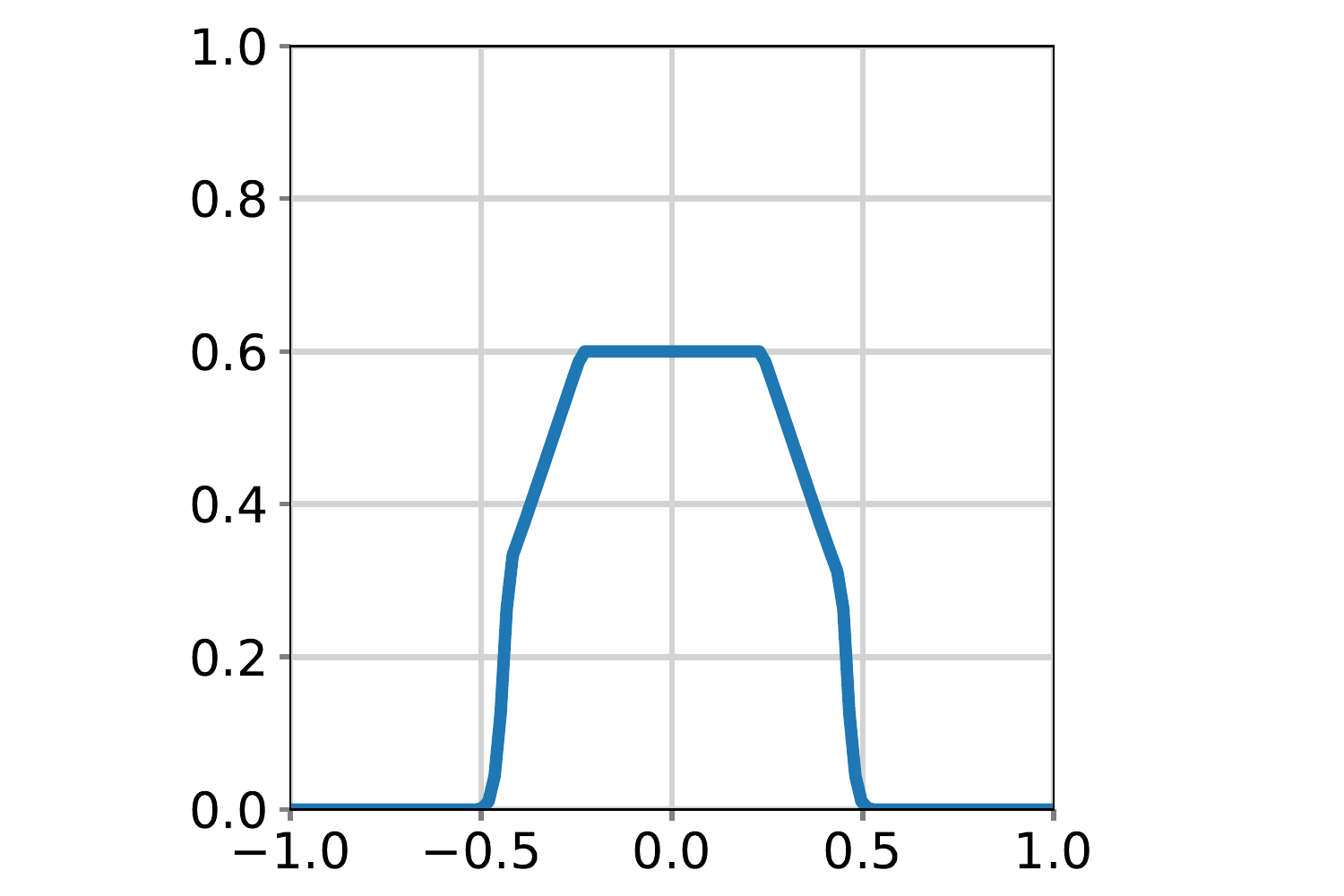}
\caption{Saturated}
\label{fig:regular-y}
\end{subfigure}
\begin{subfigure}[t]{0.21\textwidth}
\centering
\captionsetup{justification=centering}
\includegraphics[width=0.8\textwidth]{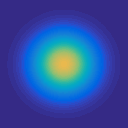}
\includegraphics[width=\textwidth, trim = {61 0 61 0}, clip]{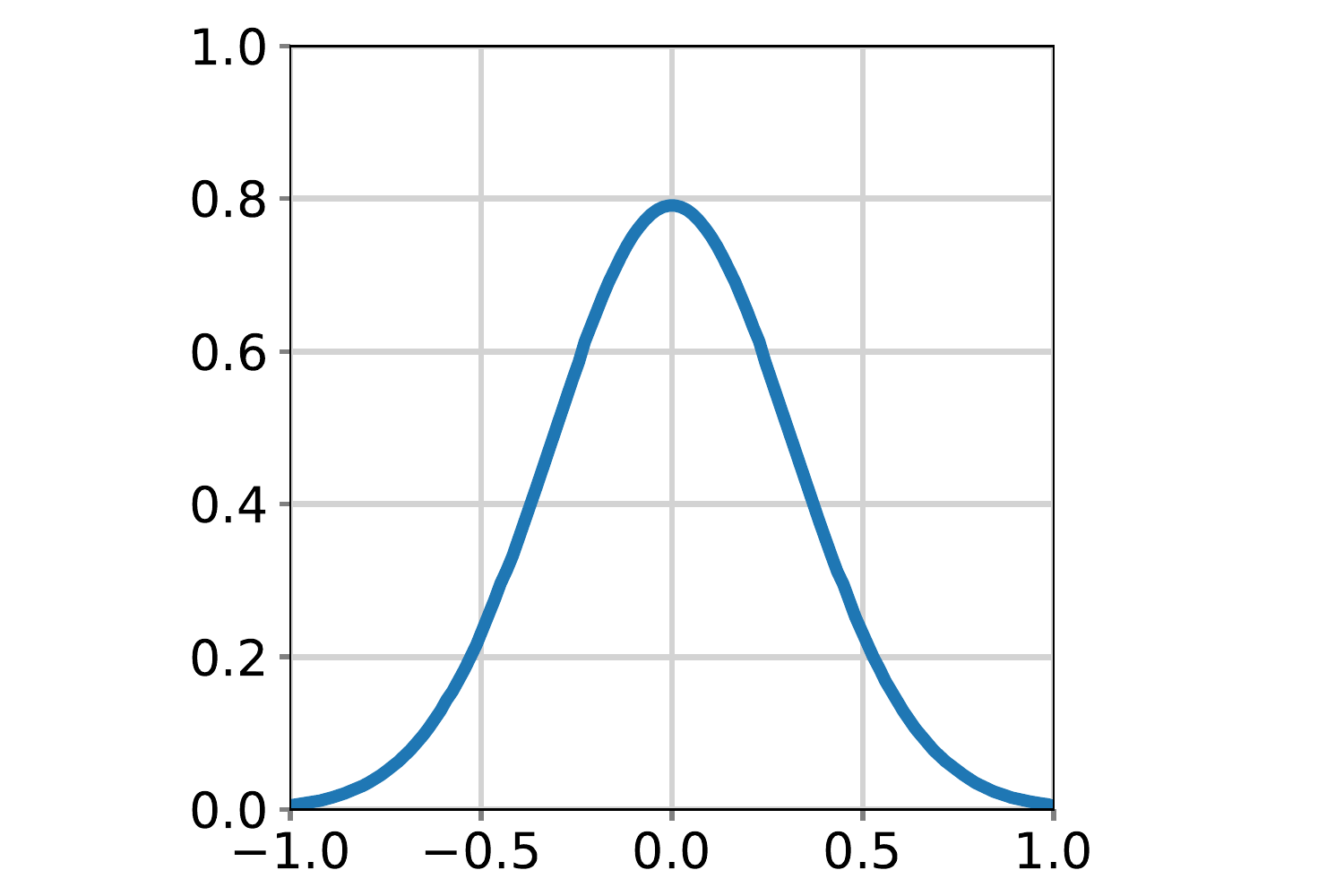}
\caption{U-Net}
\label{fig:regular-u}
\end{subfigure}
\begin{subfigure}[t]{0.21\textwidth}
\centering
\captionsetup{justification=centering}
\includegraphics[width=0.8\textwidth]{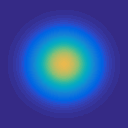}
\includegraphics[width=\textwidth, trim = {61 0 61 0}, clip]{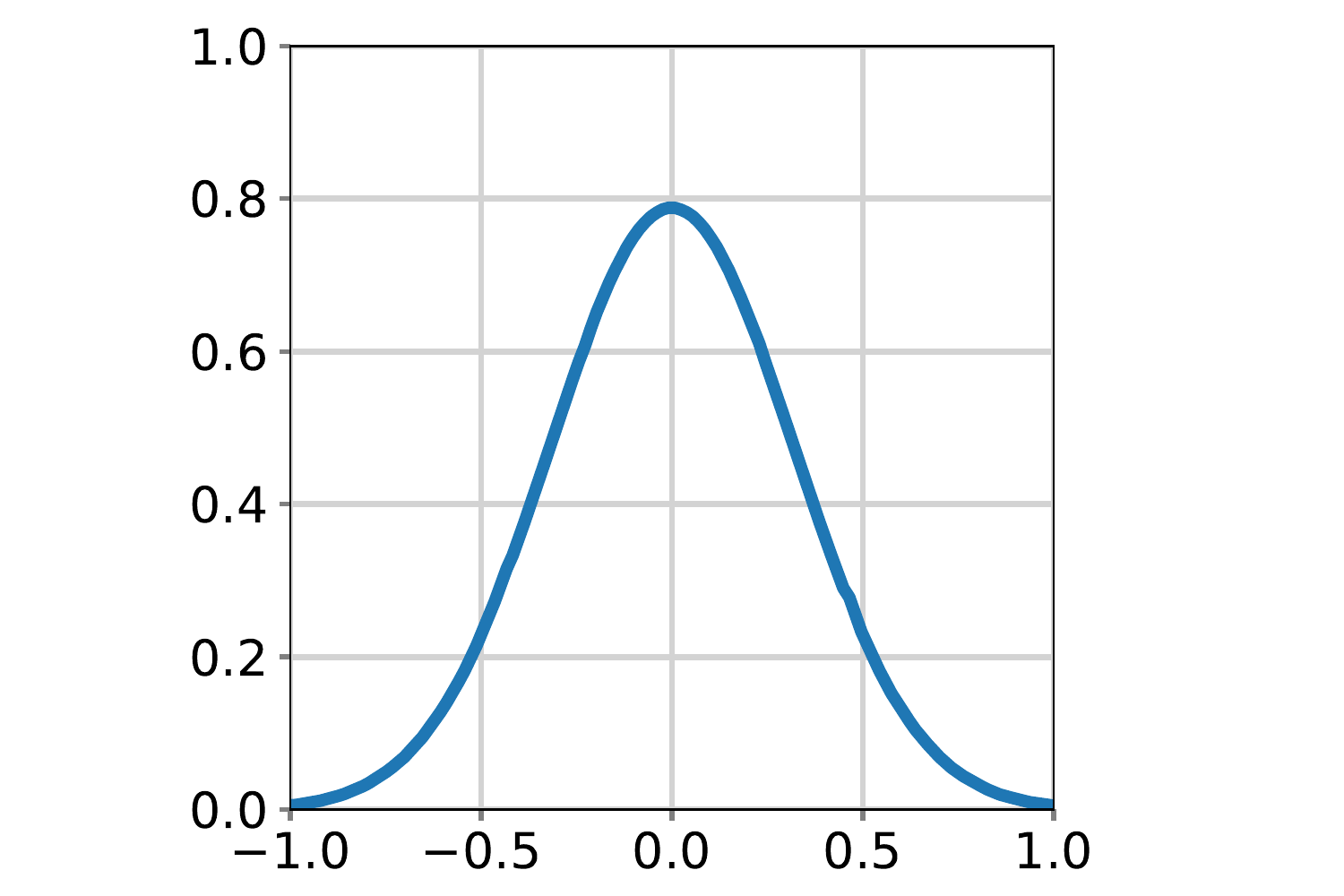}
\caption{Data-consistent}
\label{fig:regular-d}
\end{subfigure}
\caption{Reconstructions of a sample from the regular test set. In the bottom the horizontal central slice is shown. Both U-Net and data-consistent network provide an almost perfect reconstruction.}
\label{fig:regular}
\end{figure}

In Figure \ref{fig:modified} the results for one sample are shown for the modified test set, which contains smaller Gaussians with a slightly lower intensity. Both U-Net and the data-consistent network are not perfectly able to fill in the missing information in the small Gaussians. This can be expected, since Gaussians of this size were not included in the training set. However, the data-consistent network does not deform the Gaussian at the location where it is not saturated, while U-Net does this slightly; for instance around $-0.5$ in the slice plot. This behaviour is also reflected in the PSNR and SSIM values in Table \ref{tab:quality_gaussians}. Interestingly, the pseudo-inverse behaves very well if we just look at the values in the table, because the saturation mapping did not destroy a lot of the information in the Gaussian. Visual results of three more samples in the modified test set are shown in appendix \ref{app:Gaussians}. It can be seen that U-Net tends to widen the Gaussians, since it was trained on Gaussians in the training set that were wider. Although the modified test set shows a very specific modification, it illustrates that a data-consistent network is beneficial over using an arbitrary neural network: by making use of the information that we have from the mapping $\Fo$, we obtain generalization capacity.

\begin{figure}[!ht]
\centering
\begin{subfigure}[t]{0.21\textwidth}
\centering
\captionsetup{justification=centering}
\includegraphics[width=0.8\textwidth]{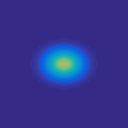}
\includegraphics[width=\textwidth, trim = {61 0 61 0}, clip]{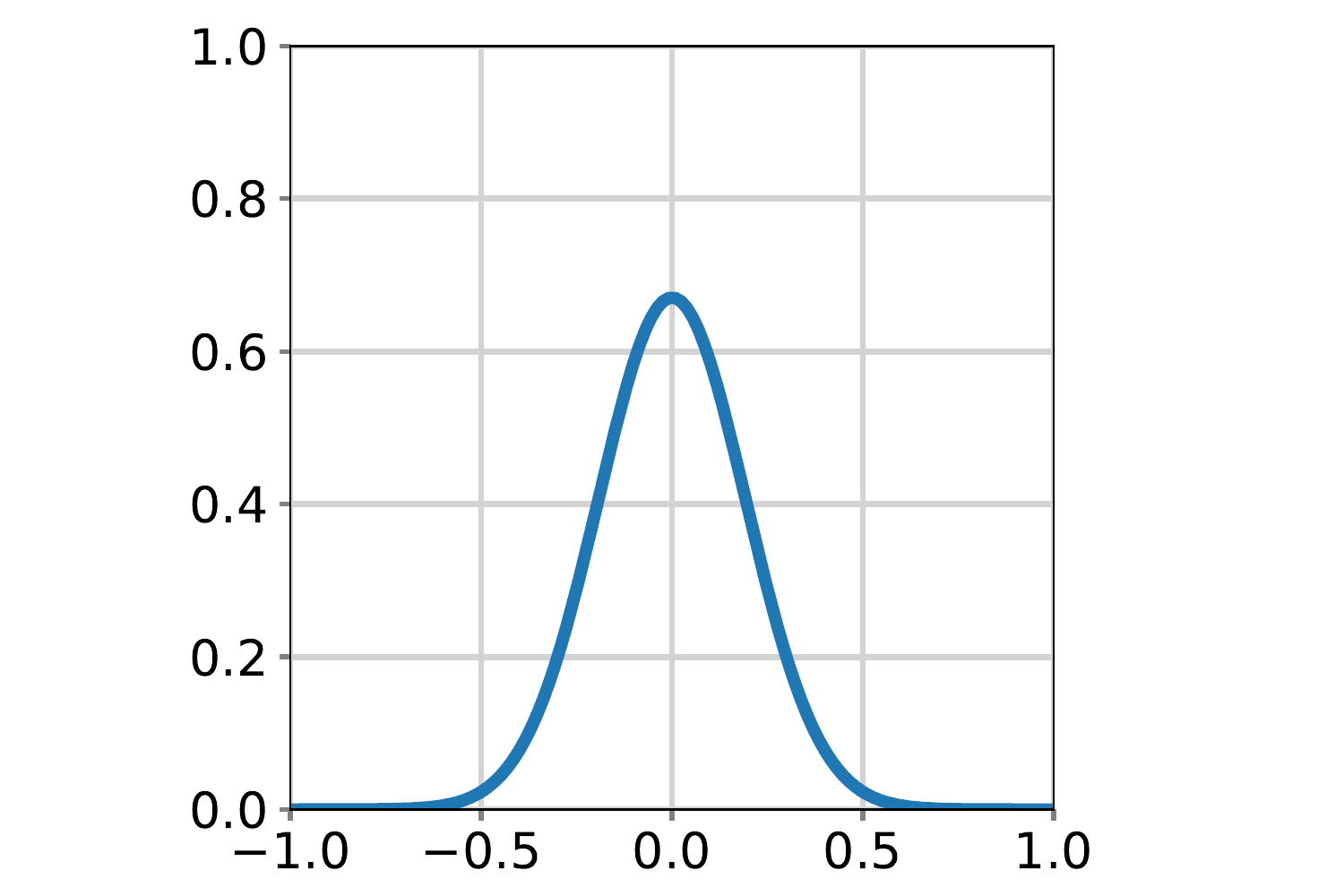}
\caption{Ground truth}
\label{fig:modified-x}
\end{subfigure}
\begin{subfigure}[t]{0.21\textwidth}
\centering
\captionsetup{justification=centering}
\includegraphics[width=0.8\textwidth]{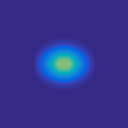}
\includegraphics[width=\textwidth, trim = {61 0 61 0}, clip]{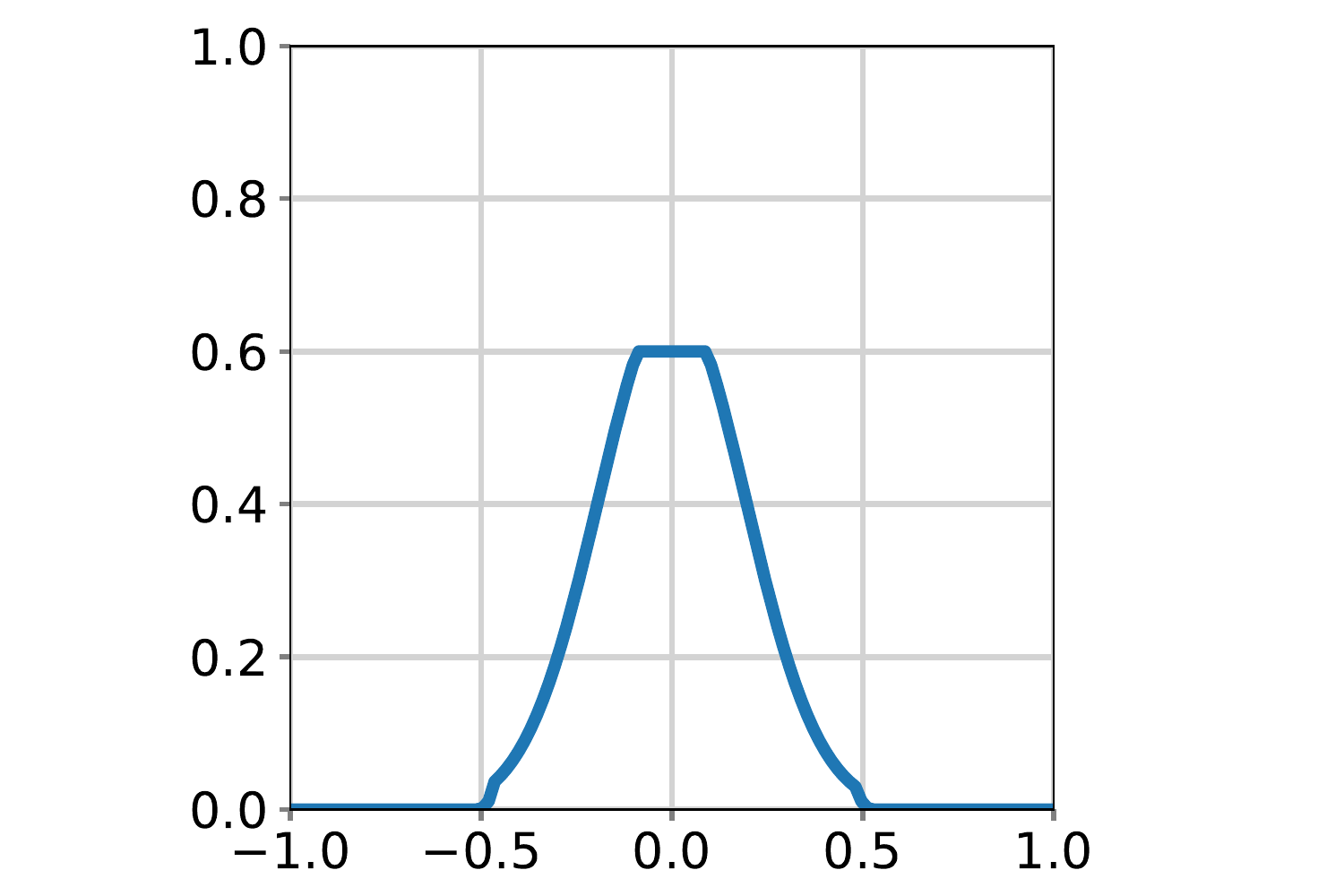}
\caption{Saturated}
\label{fig:modified-y}
\end{subfigure}
\begin{subfigure}[t]{0.21\textwidth}
\centering
\captionsetup{justification=centering}
\includegraphics[width=0.8\textwidth]{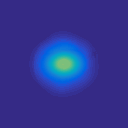}
\includegraphics[width=\textwidth, trim = {61 0 61 0}, clip]{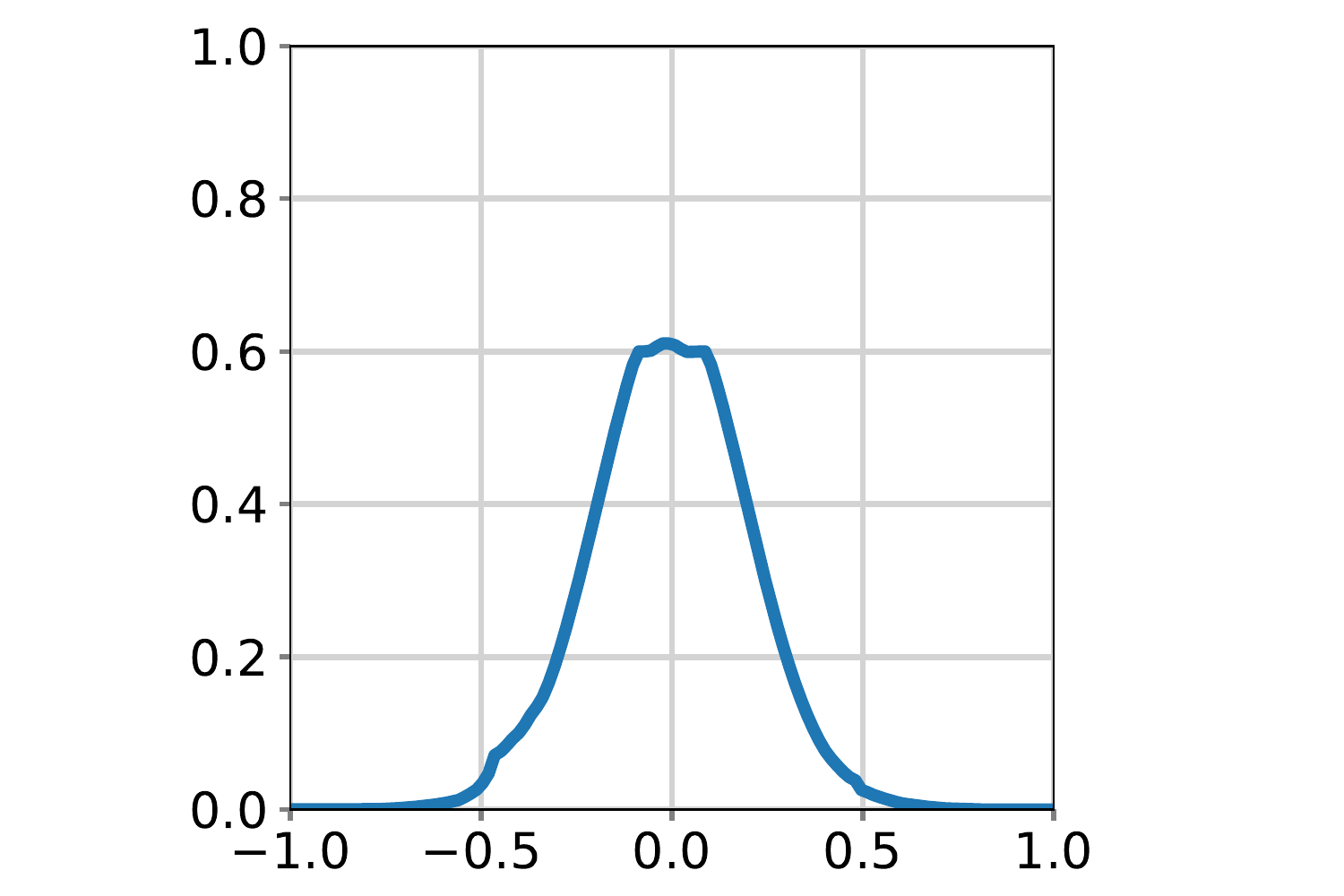}
\caption{U-Net}
\label{fig:modified-u}
\end{subfigure}
\begin{subfigure}[t]{0.21\textwidth}
\centering
\captionsetup{justification=centering}
\includegraphics[width=0.8\textwidth]{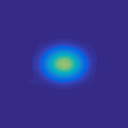}
\includegraphics[width=\textwidth, trim = {61 0 61 0}, clip]{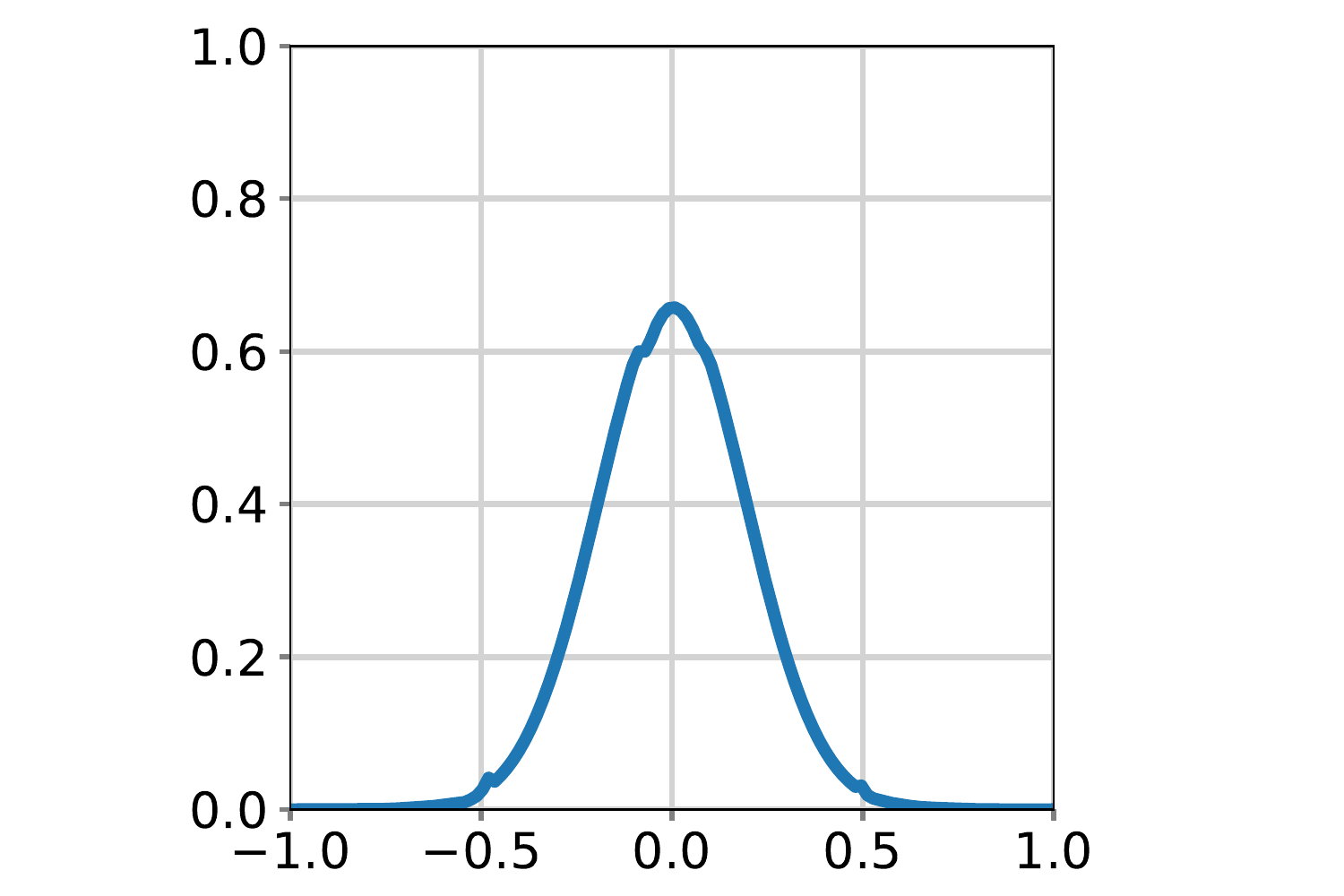}
\caption{Data-consistent}
\label{fig:modified-d}
\end{subfigure}
\caption{Reconstructions of a sample from the modified test set. In the bottom the horizontal central slice is shown. Data consistency makes sure that intensity is only changed above the saturation level.}
\label{fig:modified}
\end{figure}

\begin{table}[!ht]
\vspace{-2mm}
\begin{center}
\scalebox{0.94}[1.0]
{\begin{tabular}{*8{l}} 
\toprule

			 &				&	PSNR		&			 	&				&	SSIM		&			 	\\
			 \cmidrule(lr){2-4}					 			\cmidrule(l){5-7}					 
		 	 & Pseudo-		& 		 		& Data-		 	& Pseudo-		& 		 		& Data-		 	\\
			 & inverse		& U-Net			& consistent 	& inverse		& U-Net			& consistent 	\\
\midrule
Regular	set	 & $24.2 \pm2.2$ & $60.6 \pm2.1$ & $66.7 \pm1.6$ & $0.56 \pm 0.08$ & $1.00 \pm0.00$ & $1.00 \pm0.00$ \\
Modified set & $48.0 \pm7.8$ & $36.9 \pm2.9$ & $48.0 \pm4.4$ & $0.99 \pm 0.01$ & $0.92 \pm0.03$ & $0.97 \pm0.01$ \\
\bottomrule
\end{tabular}} 
\end{center}
\caption {Comparison of PSNR and SSIM for all reconstruction methods.}
\label{tab:quality_gaussians}
\end{table}

\subsection{Saturation of Radon transformed human chest images}
For one selected sample in the regular test set and one in the modified test set, all reconstructions are shown in Figures \ref{fig:regular-} and \ref{fig:modified-}. These specific samples were selected because their PSNR values for the U-Nets and the data-consistent network show a similar relation to each other as the average PSNR values of the whole test set (c.f. Table \ref{tab:quality_Radon}). 

\begin{figure}[!ht]
\centering
\begin{subfigure}[t]{0.193\textwidth}
\centering
\captionsetup{justification=centering}
\includegraphics[width=\textwidth, trim = {68 0 92 0}, clip]{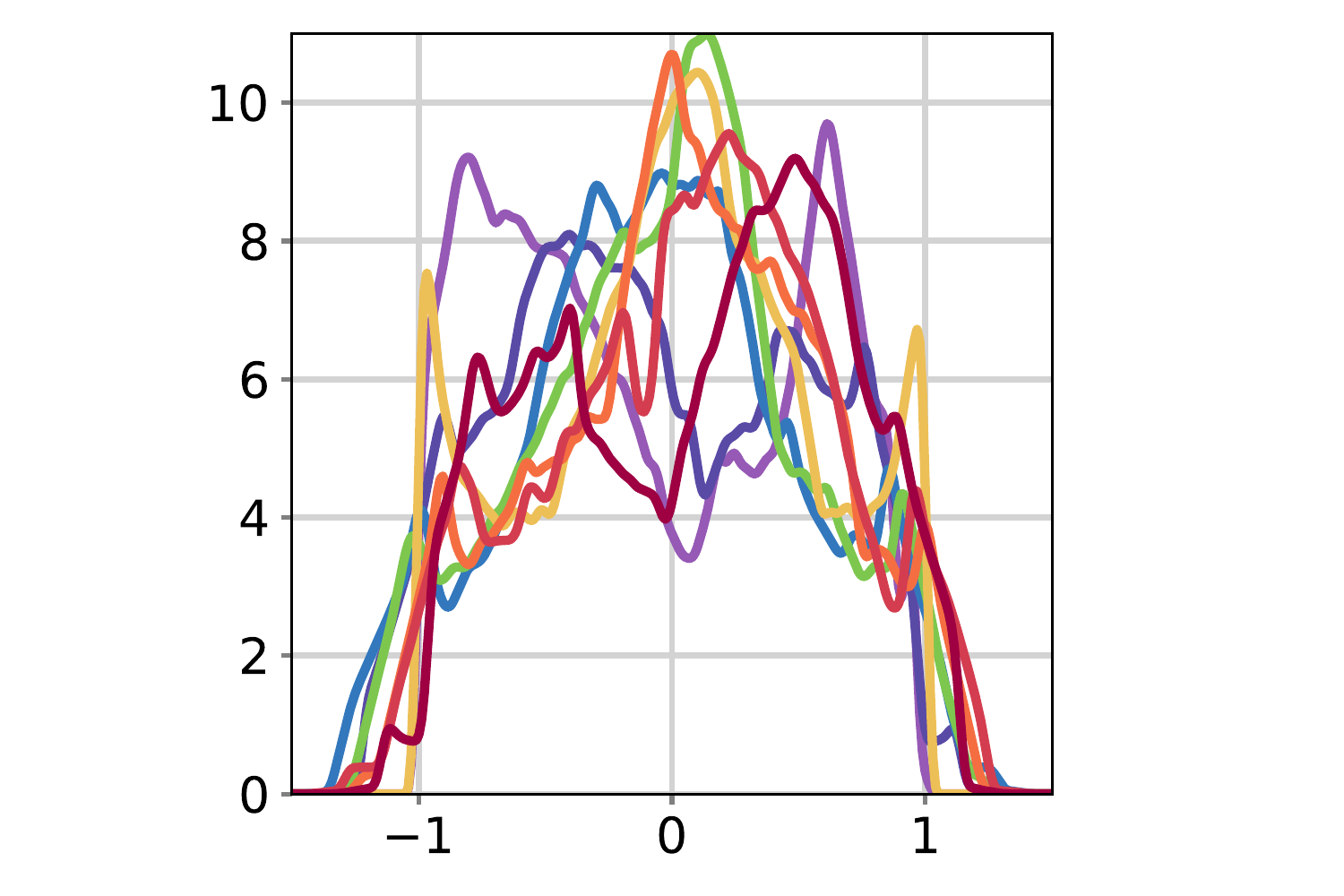}
\includegraphics[width=\textwidth, trim = {-20 0 0 0}, clip]{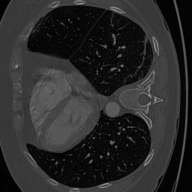}
\caption{\scalebox{0.95}[1.0]{Ground truth}}
\label{fig:regular-GT}
\end{subfigure}
\begin{subfigure}[t]{0.193\textwidth}
\centering
\captionsetup{justification=centering}
\includegraphics[width=\textwidth, trim = {68 0 92 0}, clip]{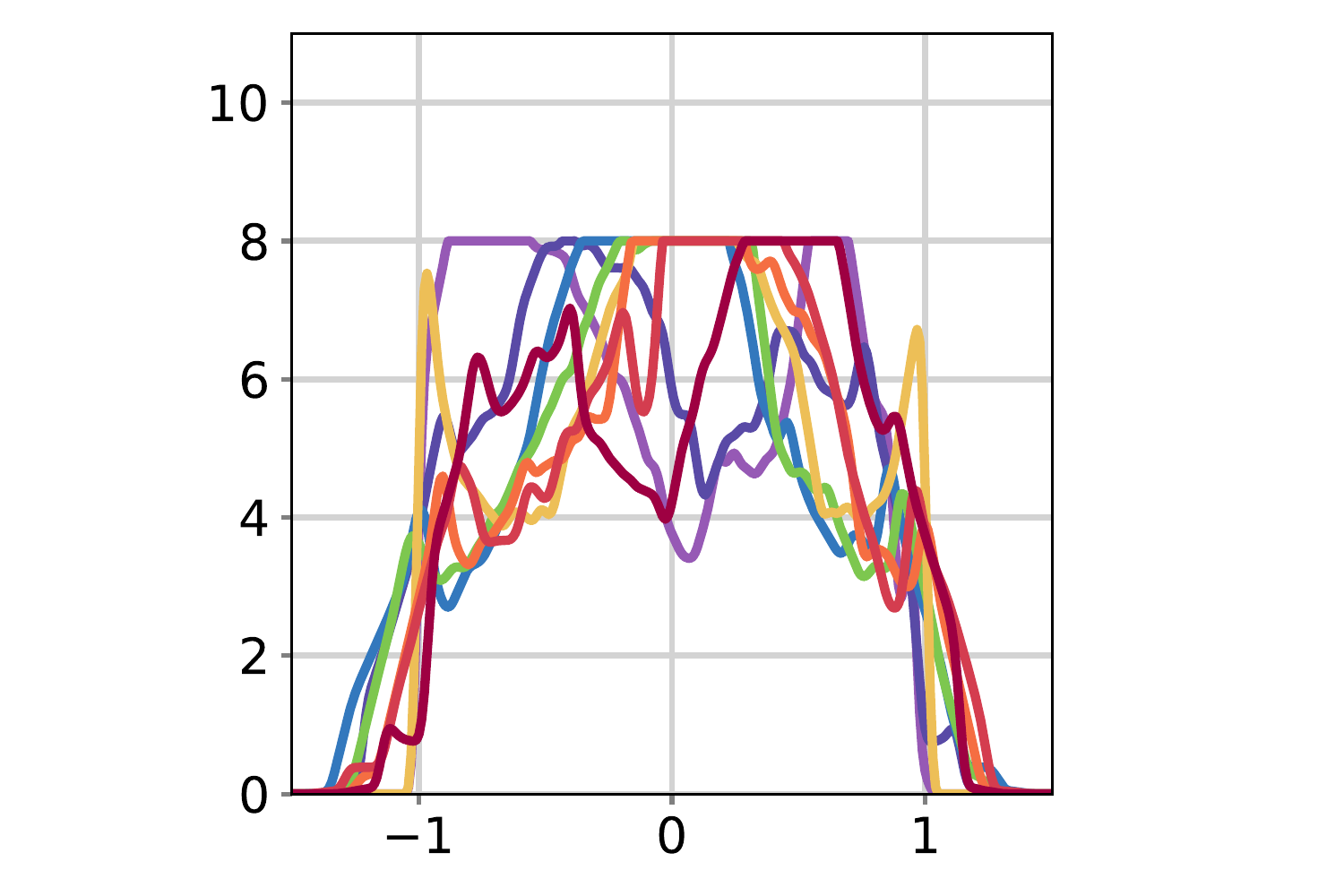}
\includegraphics[width=\textwidth, trim = {-20 0 0 0}, clip]{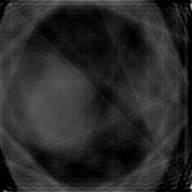}
\caption{\scalebox{0.95}[1.0]{Pseudo-inverse}\\{PSNR $=22.80$}}
\label{fig:regular-PI}
\end{subfigure}
\begin{subfigure}[t]{0.193\textwidth}
\centering
\captionsetup{justification=centering}
\includegraphics[width=\textwidth, trim = {68 0 92 0}, clip]{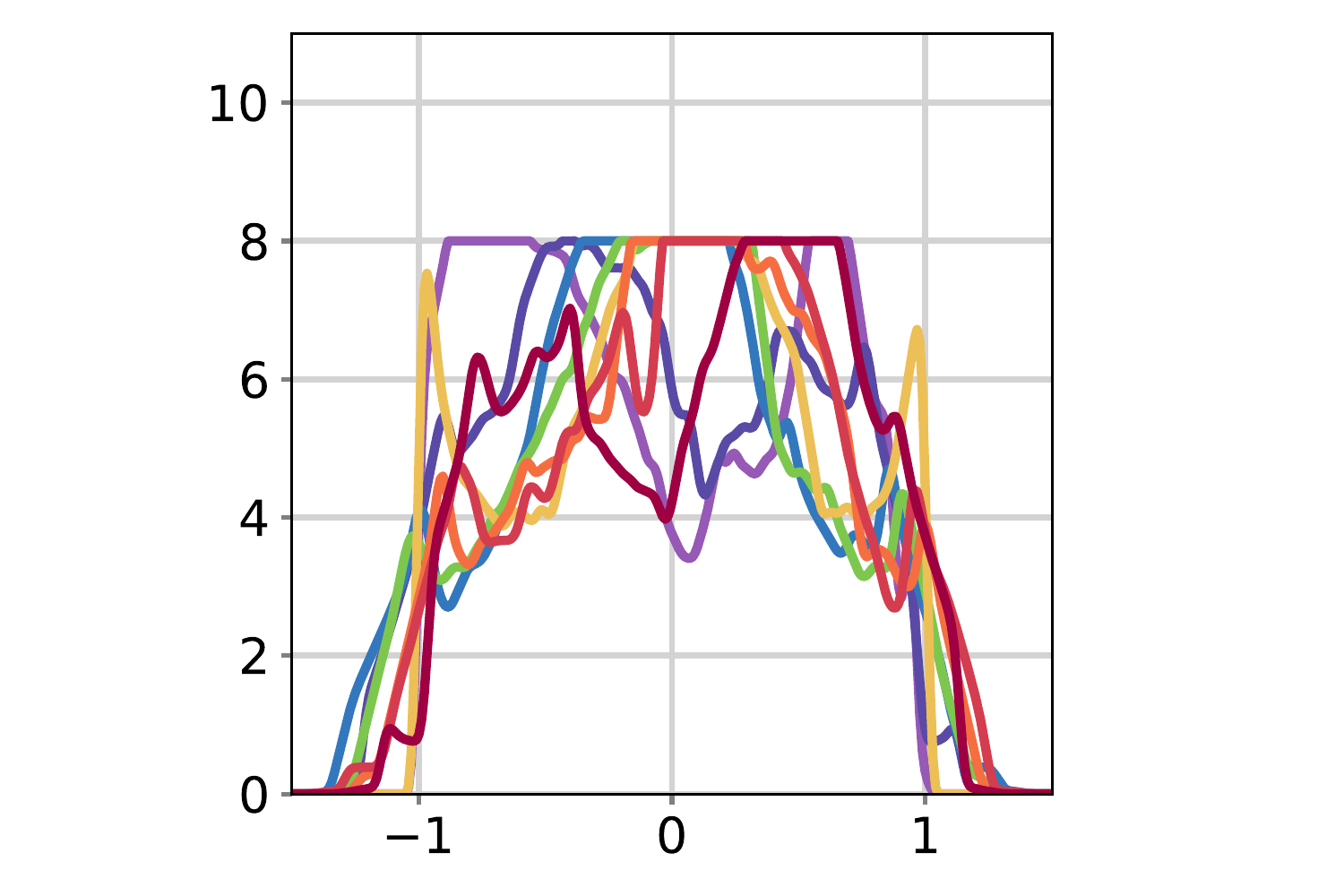}
\includegraphics[width=\textwidth, trim = {-20 0 0 0}, clip]{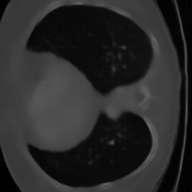}
\caption{\scalebox{0.95}[1.0]{One U-Net}\\{PSNR $=27.31$}}
\label{fig:regular-U1}
\end{subfigure}
\begin{subfigure}[t]{0.193\textwidth}
\centering
\captionsetup{justification=centering}
\includegraphics[width=\textwidth, trim = {68 0 92 0}, clip]{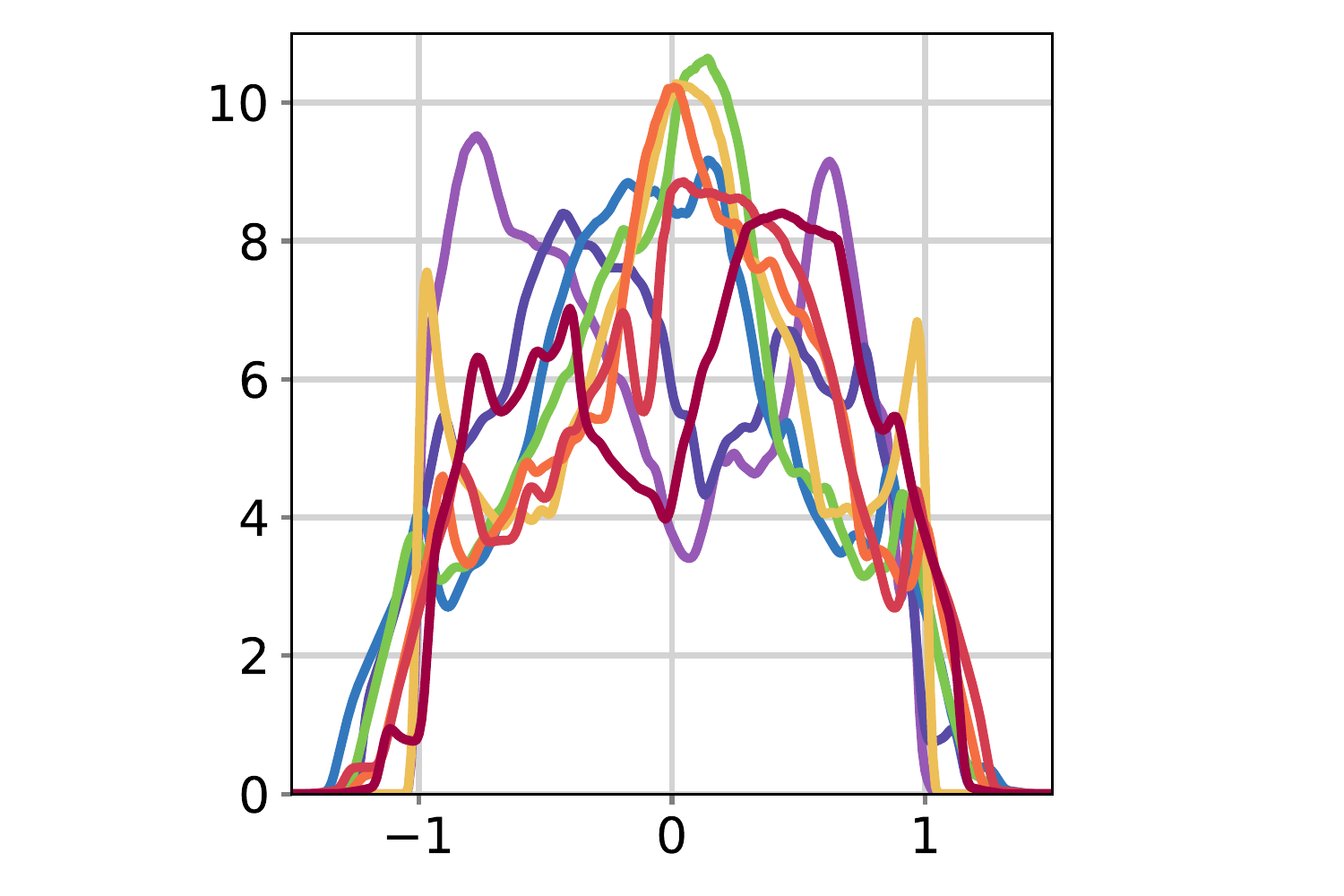}
\includegraphics[width=\textwidth, trim = {-20 0 0 0}, clip]{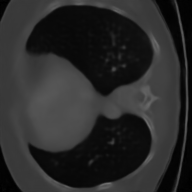}
\caption{\scalebox{0.95}[1.0]{Two U-Nets}\\{PSNR $=27.70$}}
\label{fig:regular-U2}
\end{subfigure}
\begin{subfigure}[t]{0.193\textwidth}
\centering
\captionsetup{justification=centering}
\includegraphics[width=\textwidth, trim = {68 0 92 0}, clip]{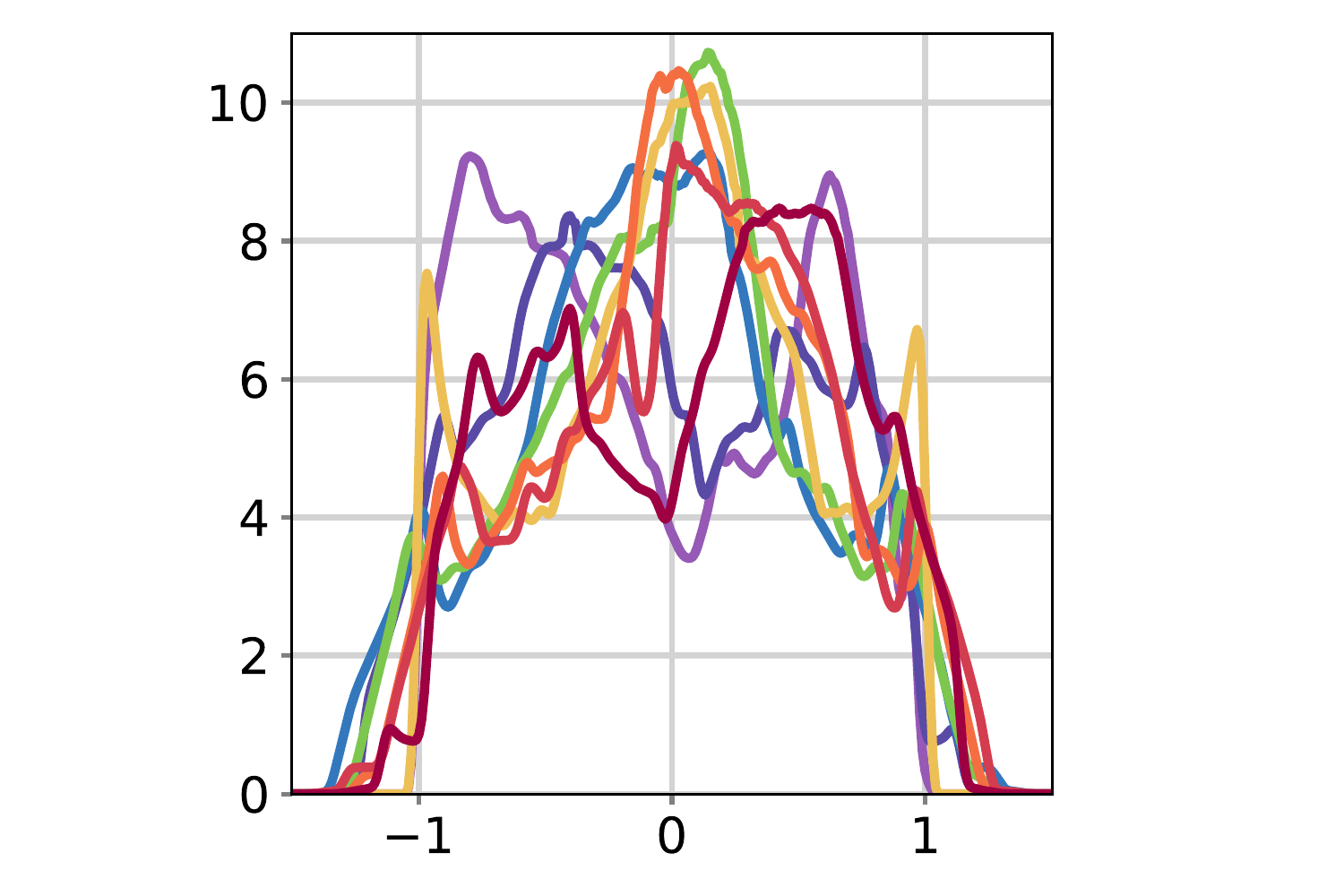}
\includegraphics[width=\textwidth, trim = {-20 0 0 0}, clip]{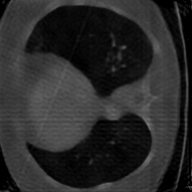}
\caption{\scalebox{0.95}[1.0]{Data-consistent}\\{PSNR $=27.01$}}
\label{fig:regular-DC}
\end{subfigure}
\caption{Reconstructions of a typical sample from the regular test set. Top: reconstructed sinograms with all 8 angles in different colors. Bottom: reconstructed images.}
\label{fig:regular-}
\end{figure}

In the top of Figure \ref{fig:regular-}, the inputs of the right inverse $\Go_0^{(1)}$ are shown. This corresponds to the saturated sinograms in case no or only one neural network is trained, and this corresponds to the output of the neural network in the sinogram domain in case two neural networks are trained. The sinogram signals are plotted in a different color for each angle. The data-consistent network does not change the values of the sinogram that are below the saturation level ($M=8$). Interestingly, the U-Net in the sinogram domain has learned not to do this as well to a large extent: some values just below the saturation level are changed (for instance the purple line around $-0.5$), but values much lower are not changed at all. In the bottom of Figure \ref{fig:regular-}, all reconstructions for this sample are shown. It can be seen that although PSNR values are similar, there is a clear visual difference: the data-consistent network shows more artefacts that are typical for limited-angle Radon reconstructions, while the standard U-Nets provide over-smoothed reconstructions. In other words, the data-consistent reconstruction does not smooth out potential details, while the other networks do. However, all networks perform rather similarly, since there are no clear structures that can be seen in one of the reconstructions and not in the others. This is also reflected in Table \ref{tab:quality_Radon}. Moreover, since only 8 angles were used in the Radon transform, only larger structures can be reconstructed. Some extreme samples, for which the PSNR value of the data-consistent solution is high, similar or low compared to the U-Net solutions, are shown in Appendix \ref{app:regular}.

\begin{figure}[!ht]
\centering
\begin{subfigure}[t]{0.193\textwidth}
\centering
\captionsetup{justification=centering}
\includegraphics[width=\textwidth, trim = {68 0 92 0}, clip]{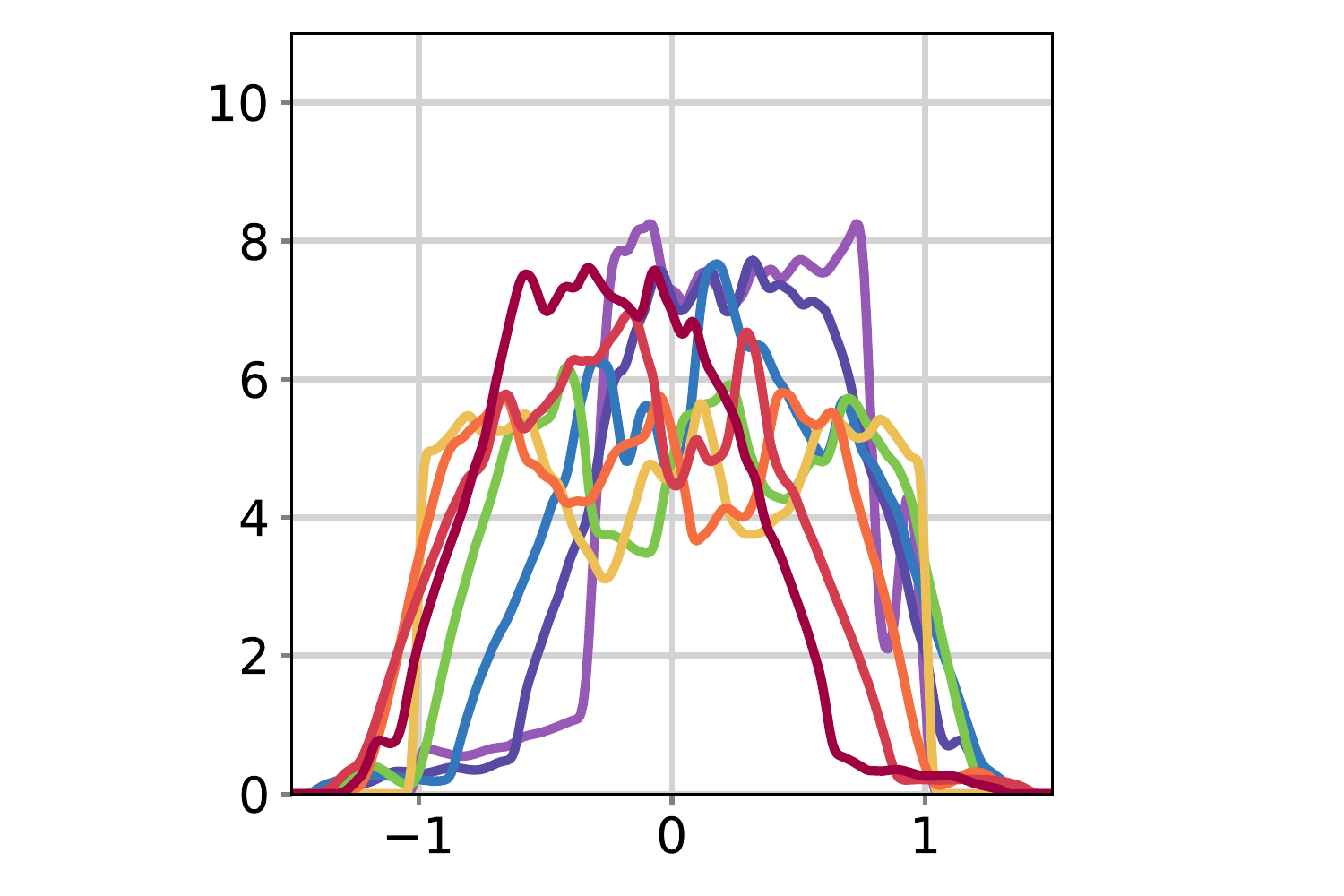}
\includegraphics[width=\textwidth, trim = {-20 0 0 0}, clip]{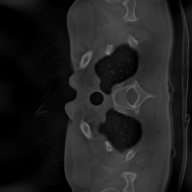}
\caption{\scalebox{0.95}[1.0]{Ground truth}}
\label{fig:modified-GT}
\end{subfigure}
\begin{subfigure}[t]{0.193\textwidth}
\centering
\captionsetup{justification=centering}
\includegraphics[width=\textwidth, trim = {68 0 92 0}, clip]{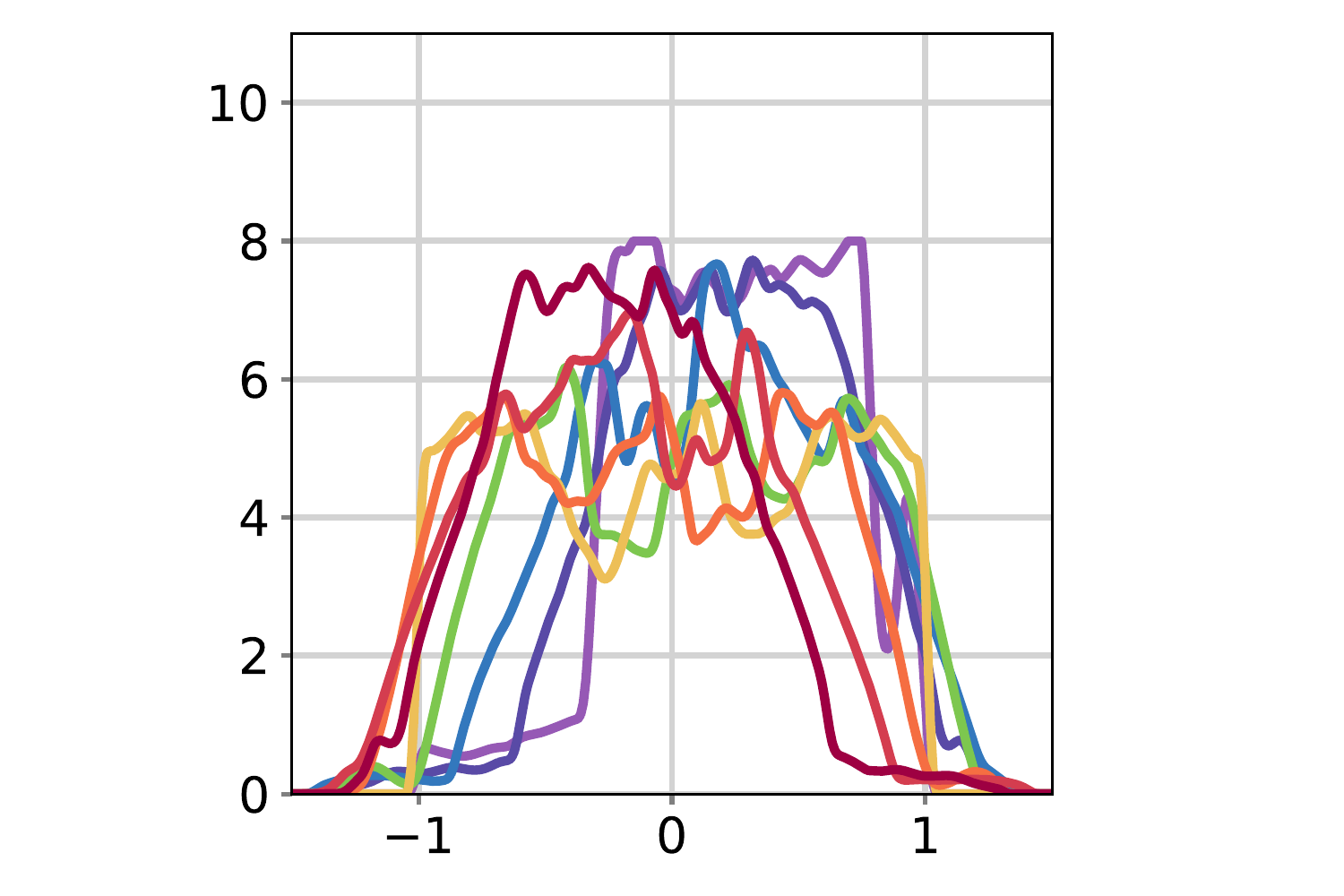}
\includegraphics[width=\textwidth, trim = {-20 0 0 0}, clip]{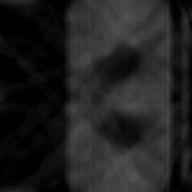}
\caption{\scalebox{0.95}[1.0]{Pseudo-inverse}\\{PSNR $=30.02$}}
\label{fig:modified-PI}
\end{subfigure}
\begin{subfigure}[t]{0.193\textwidth}
\centering
\captionsetup{justification=centering}
\includegraphics[width=\textwidth, trim = {68 0 92 0}, clip]{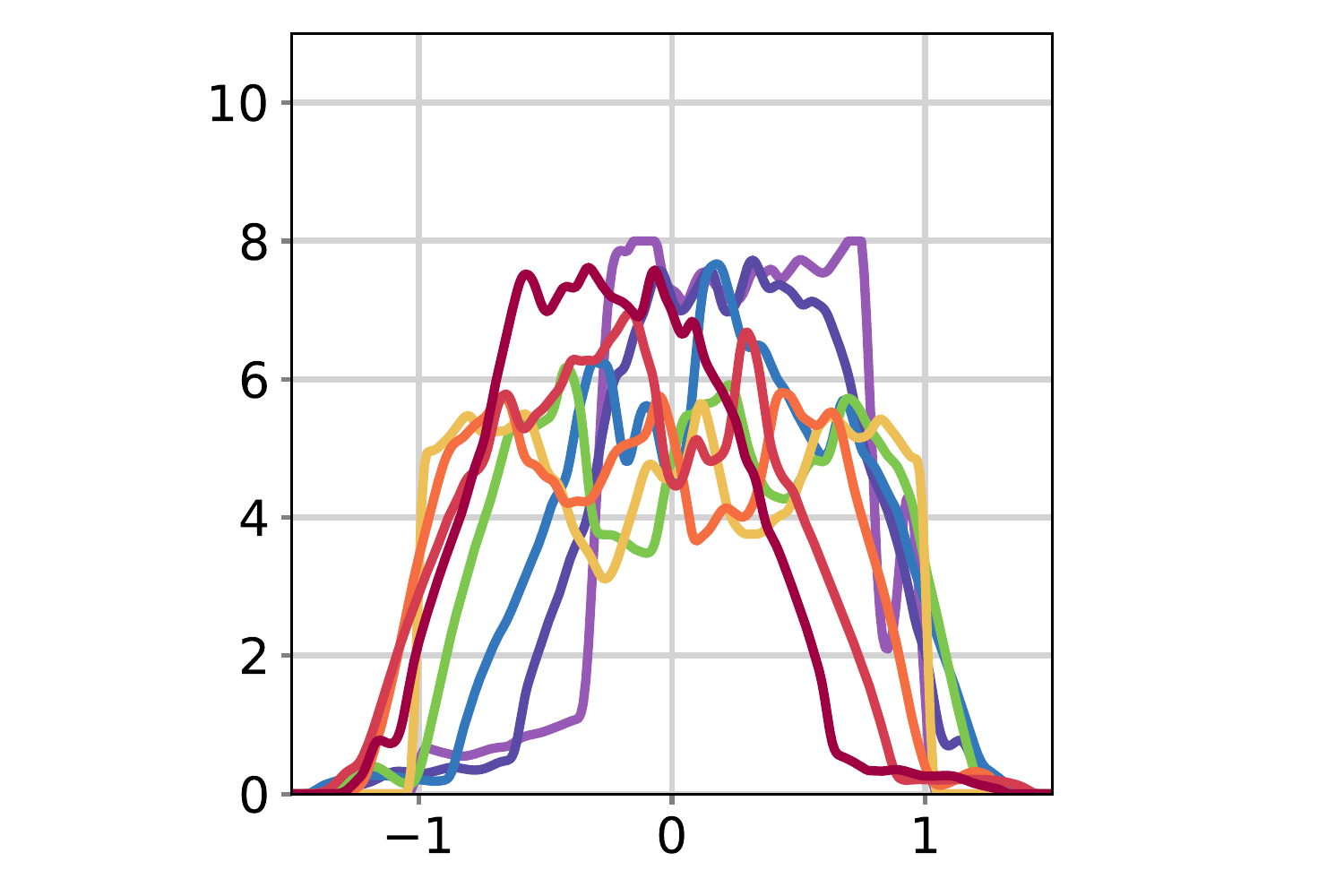}
\includegraphics[width=\textwidth, trim = {-20 0 0 0}, clip]{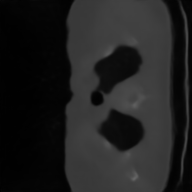}
\caption{\scalebox{0.95}[1.0]{One U-Net}\\{PSNR $=28.29$}}
\label{fig:modified-U1}
\end{subfigure}
\begin{subfigure}[t]{0.193\textwidth}
\centering
\captionsetup{justification=centering}
\includegraphics[width=\textwidth, trim = {68 0 92 0}, clip]{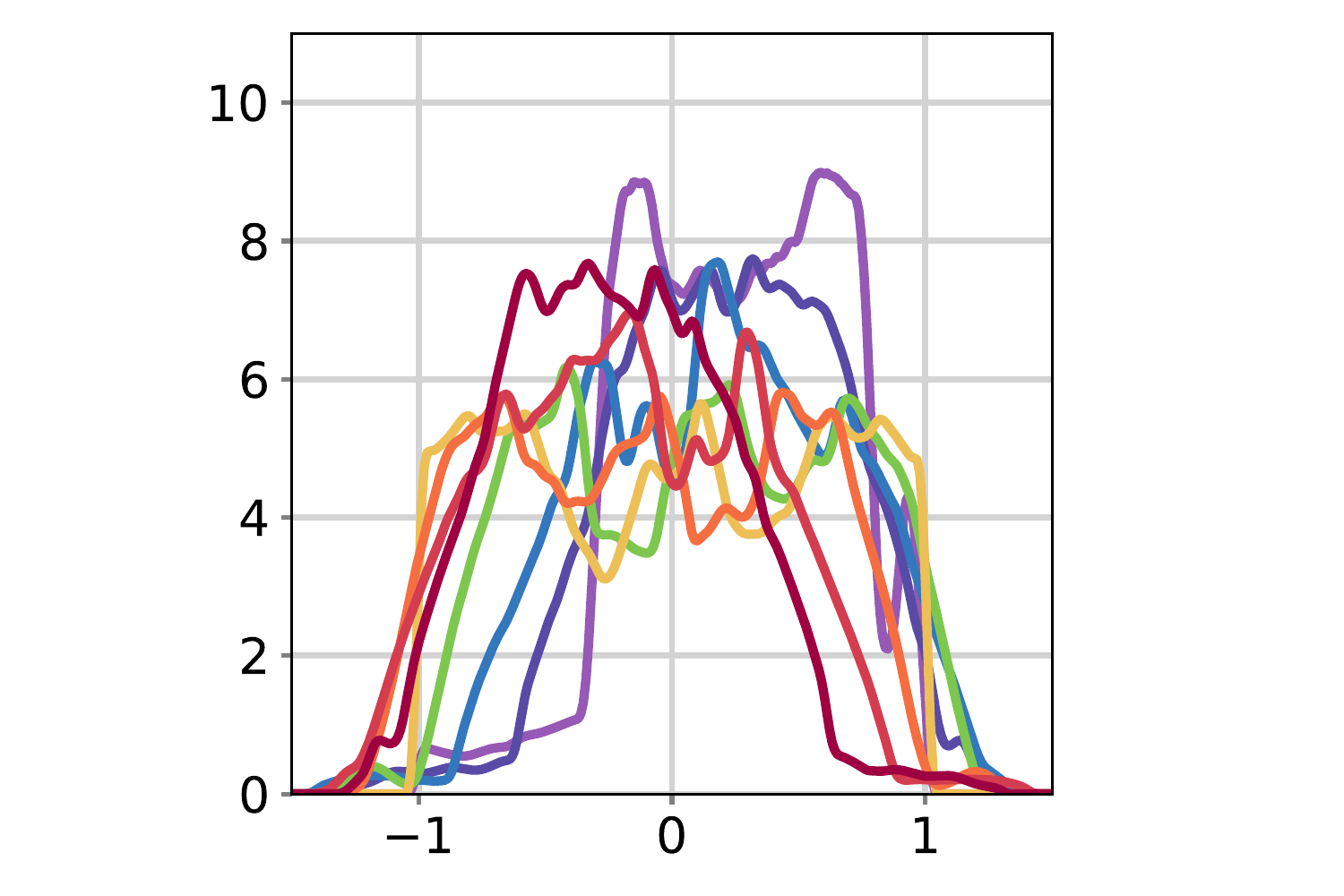}
\includegraphics[width=\textwidth, trim = {-20 0 0 0}, clip]{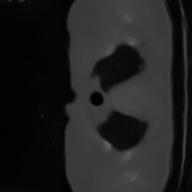}
\caption{\scalebox{0.95}[1.0]{Two U-Nets}\\{PSNR $=29.08$}}
\label{fig:modified-U2}
\end{subfigure}
\begin{subfigure}[t]{0.193\textwidth}
\centering
\captionsetup{justification=centering}
\includegraphics[width=\textwidth, trim = {68 0 92 0}, clip]{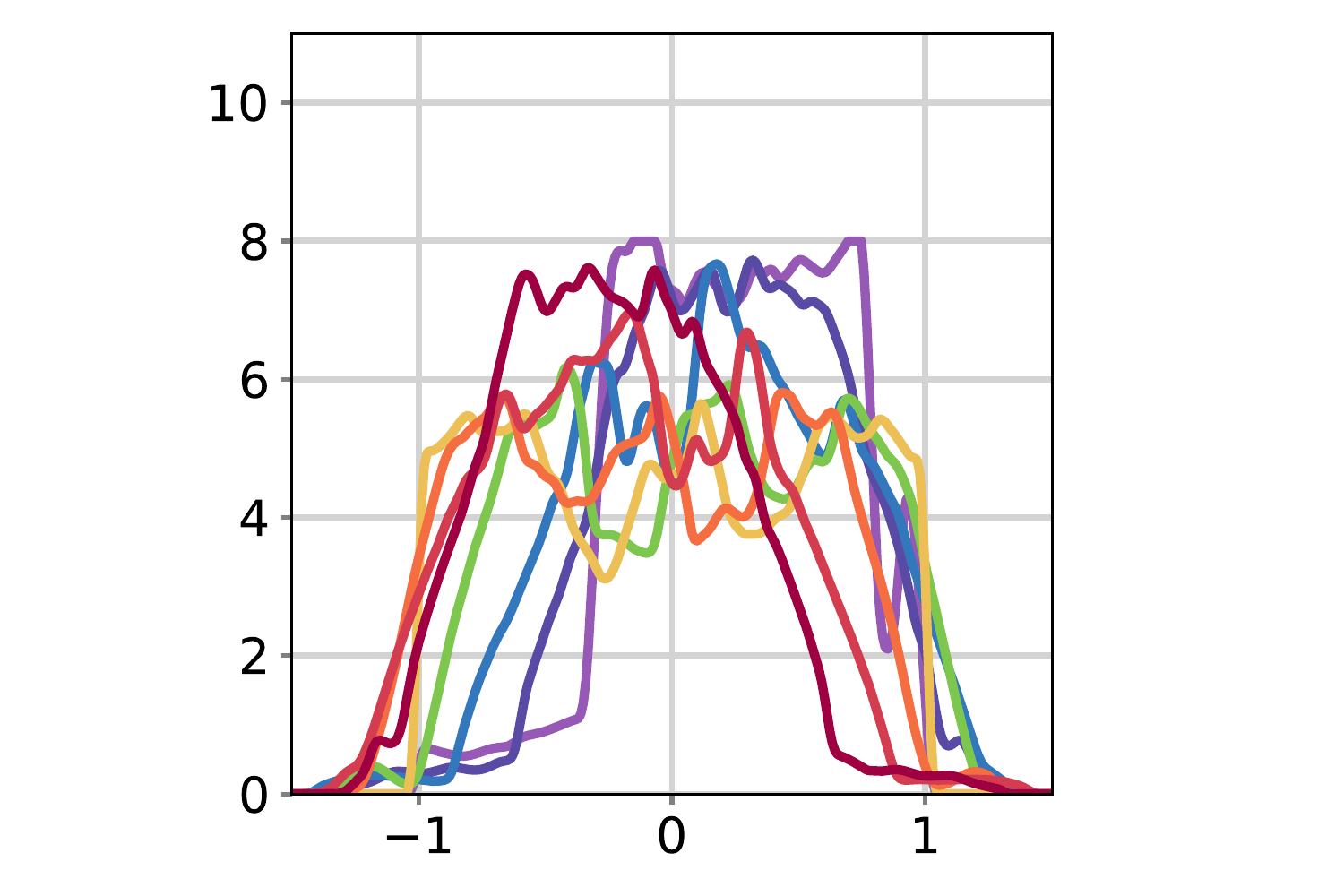}
\includegraphics[width=\textwidth, trim = {-20 0 0 0}, clip]{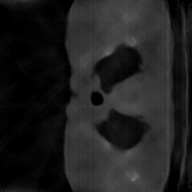}
\caption{\scalebox{0.95}[1.0]{Data-consistent}\\{PSNR $=30.83$}}
\label{fig:modified-DC}
\end{subfigure}
\caption{Reconstructions of a typical sample from the modified test set. Top: reconstructed sinograms with all 8 angles in different colors. Bottom: reconstructed images.}
\label{fig:modified-}
\end{figure}

In the top of Figure \ref{fig:modified-}, again all sinogram reconstructions are plotted, now for the modified test set. As the modified sinograms contain many values just below or around the saturation level ($M=8$), this is challenging for the regular U-Net. Indeed it can be seen that the data-consistent network does not change the saturated sinogram much, while the U-Net on the sinogram increases the purple line well beyond the saturation level. In the bottom of Figure \ref{fig:regular-}, it can be seen that the ground truth possesses some `smooth' regions, especially in the background on the left and right of the two dark inclusions. While both U-Nets create piecewise constant reconstructions that completely remove this gradient, the data-consistent network keeps these smooth regions: it generalizes better to test images that are not found in the training set by making use of the information in the operator. More extreme samples, where the PSNR value of the data-consistent solution is high, similar or low compared to the U-Net solutions, are shown in Appendix \ref{app:modified}. The same effect on smooth regions can be seen in these visualizations. In Table \ref{tab:quality_Radon}, it can be seen that for all networks the quality drops when the regular test set is replaced by the modified test set; however, this drop is only very small for the data-consistent network and is much bigger for the regular U-Nets. Note that the pseudo-inverse gives an increased PSNR for the modified test set, because the modified images were constructed to lie in the range of $\Fo$. 
\begin{table}[!ht]
\begin{center}
\scalebox{0.94}[1.0]
{\begin{tabularx}{\textwidth}{l *{3}{X} l}
\toprule			 
			 &\multicolumn{4}{c}{PSNR} \\
			 \cmidrule{2-5}
			& Pseudo-inverse	& One U-Net & Two U-Nets & Data-consistent	\\
\midrule
Regular set	 & $23.1 \pm2.3$ 	&	$30.5 \pm1.5$ 	&	$31.0 \pm1.5$ 	&	$30.1 \pm1.9$ 	\\
Modified set & $29.1 \pm1.6$ 	&	$27.5 \pm1.7$ 	&	$28.3 \pm1.4$ 	&	$29.9 \pm1.2$ 	\\
\vspace{-1.5mm}
\\
			 &\multicolumn{4}{c}{SSIM} \\
\cmidrule{2-5}
			 			& Pseudo-inverse	& One U-Net & Two U-Nets & Data-consistent	\\
\midrule
Regular set	 & $0.50 \pm0.07$ 	&	$0.82 \pm0.04$ 	&	$0.83 \pm0.04$ 	&	$0.74 \pm0.07$ 	\\
Modified set & $0.71 \pm0.07$ 	&	$0.74 \pm0.05$ 	&	$0.73 \pm0.05$ 	&	$0.75 \pm0.05$ 	\\
\vspace{-1.5mm}
\\
			 &\multicolumn{4}{c}{Data-fidelity} \\
\cmidrule{2-5}
			& Pseudo-inverse	& One U-Net & Two U-Nets & Data-consistent	\\
\midrule
Regular set & $6.1 \pm3.3$ 		&	$\hspace{1.8mm}4.8 \pm1.5$ 	&	$3.9 \pm1.1$ 	&	$0.9 \pm0.4$ 	\\
Modified set & $0.4 \pm0.2$ 	&	$11.9 \pm5.4$ 	&	$8.5 \pm2.8$ 	&	$0.6 \pm0.2$ 	\\
\bottomrule
\end{tabularx}} 
\end{center}
\caption {Comparison of PSNR, SSIM and data-fidelity for all reconstruction methods.}
\label{tab:quality_Radon}
\end{table}

Finally in Table \ref{tab:quality_Radon} we check the data-fidelity of the solutions from all networks by computing $\norm{\Fo(\tilde{\signal})-\Fo(\signal)}$, where $\tilde{\signal}$ is the solution of the respective reconstruction method and $x$ is the ground truth. Ideally, the data-fidelity should be zero for the pseudo-inverse and the data-consistent network. It can be seen that indeed the data-fidelity is much lower for these solutions than for the U-Nets, although not completely zero. This is most probably due to numerical issues (especially for the pseudo-inverse of the regular set) and due to the fact that we needed to save and load images from disc while training  because of the size of the data set. 

\section{Conclusion}
In this paper we introduced data consistent networks for nonlinear inverse problems. We presented a convergent regularization method by combining deep neural networks that converge to a data-consistent network with classical regularization methods. With the proposed data-driven regularization methods we are able to preserve convergence rates of classical methods over a transformed source set, which is adapted to some data set. This yields improved reconstructions for elements close to the training set, but at the same time data-consistent networks make use of the information from the forward mapping $\Fo$, which provides increased generalization capacity. This is particularly useful when the physics process is understood, but exact knowledge on real data is not available or when it is not possible to create a training set that is similar to the real data. We showed that on a test set similar to the training set, our approach shows reconstruction results comparable to a classical post-processing network, whereas for instances not represented in the training set, the loss of performance is much less present. This demonstrates the generalization ability of our approach.

\newpage 

\appendix
\section{Additional results for saturated Gaussians}\label{app:Gaussians}
In this appendix, three more samples from the modified test set of the saturated Gaussians are shown. It can be seen in Figures \ref{fig:modified-2} and \ref{fig:modified-3} that U-Net tends to slightly widen the Gaussian in some cases, since this was necessary for the wider Gaussians in the training set. Moreover it can be seen in Figures \ref{fig:modified-3} and \ref{fig:modified-1} that both U-Net and the data-consistent network sometimes fail to restore the top of the Gaussian adequately: the training on wider Gaussians is not directly generalised for smaller Gaussians. 

\begin{figure}[!ht]
\centering
\begin{subfigure}[t]{0.24\textwidth}
\centering
\captionsetup{justification=centering}
\includegraphics[width=0.8\textwidth]{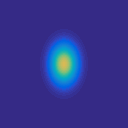}
\includegraphics[width=\textwidth, trim = {61 0 61 0}, clip]{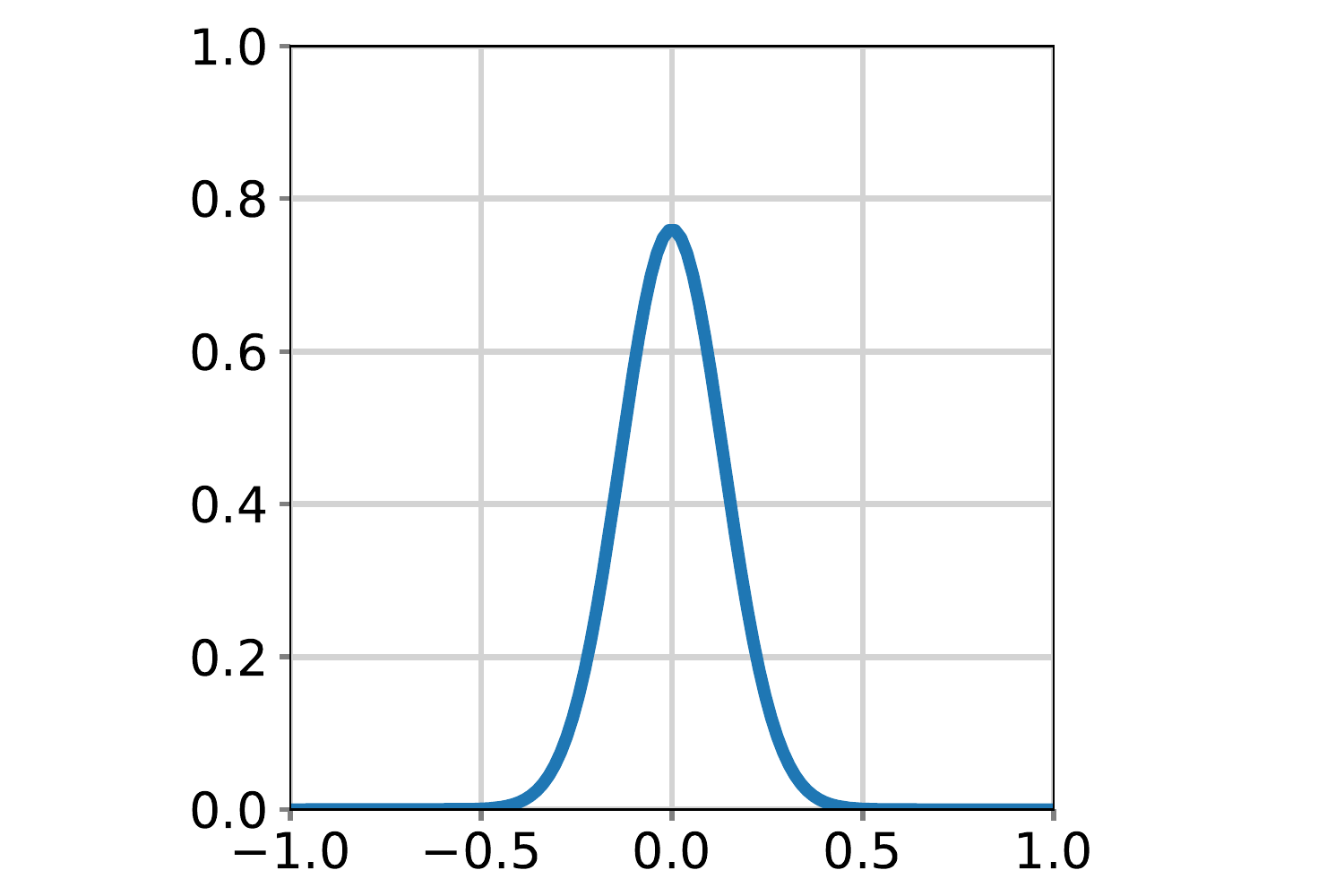}
\caption{Ground truth}
\label{fig:modified-x-2}
\end{subfigure}
\begin{subfigure}[t]{0.24\textwidth}
\centering
\captionsetup{justification=centering}
\includegraphics[width=0.8\textwidth]{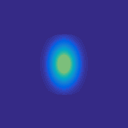}
\includegraphics[width=\textwidth, trim = {61 0 61 0}, clip]{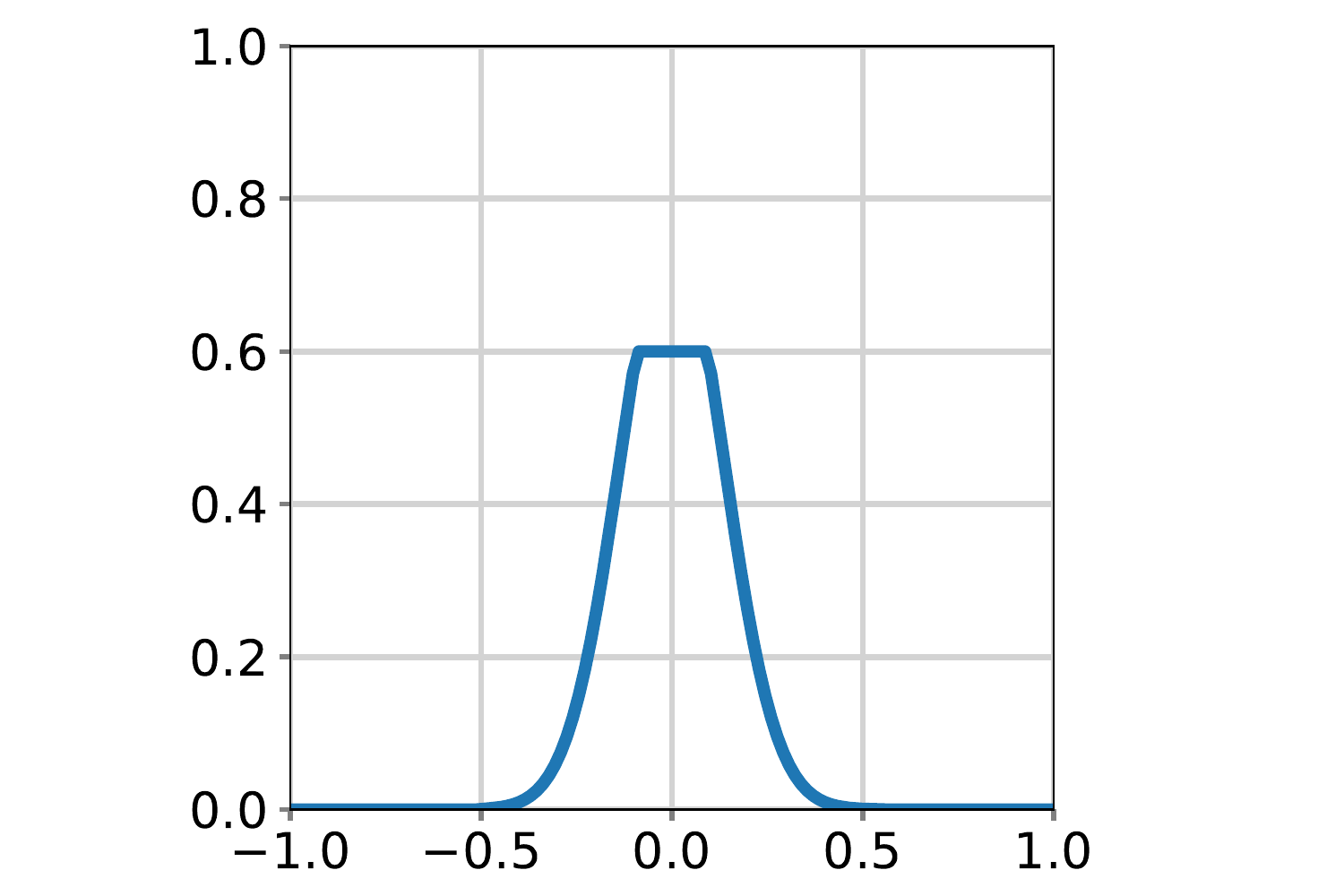}
\caption{Saturated}
\label{fig:modified-y-2}
\end{subfigure}
\begin{subfigure}[t]{0.24\textwidth}
\centering
\captionsetup{justification=centering}
\includegraphics[width=0.8\textwidth]{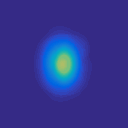}
\includegraphics[width=\textwidth, trim = {61 0 61 0}, clip]{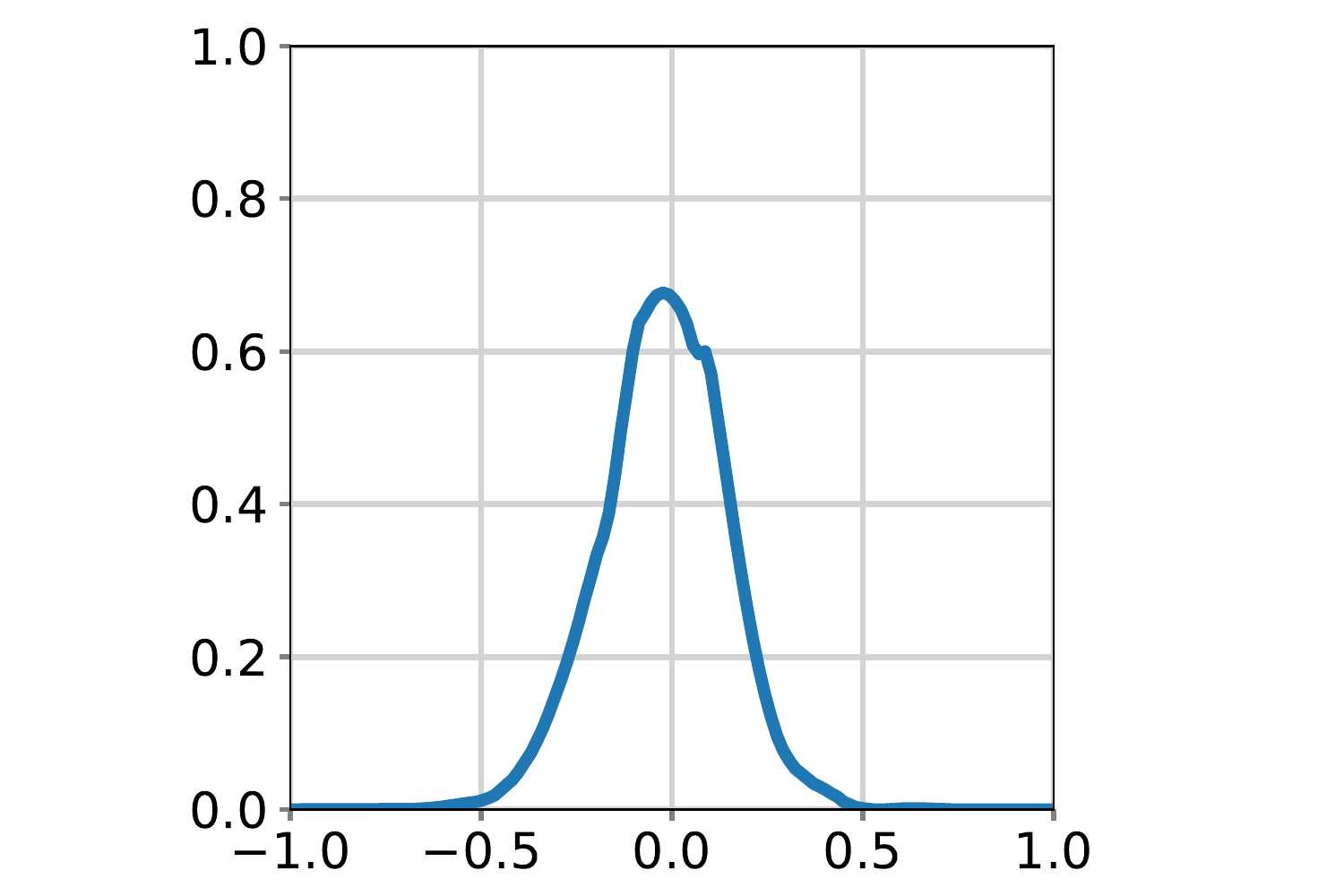}
\caption{U-Net}
\label{fig:modified-u-2}
\end{subfigure}
\begin{subfigure}[t]{0.24\textwidth}
\centering
\captionsetup{justification=centering}
\includegraphics[width=0.8\textwidth]{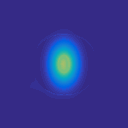}
\includegraphics[width=\textwidth, trim = {61 0 61 0}, clip]{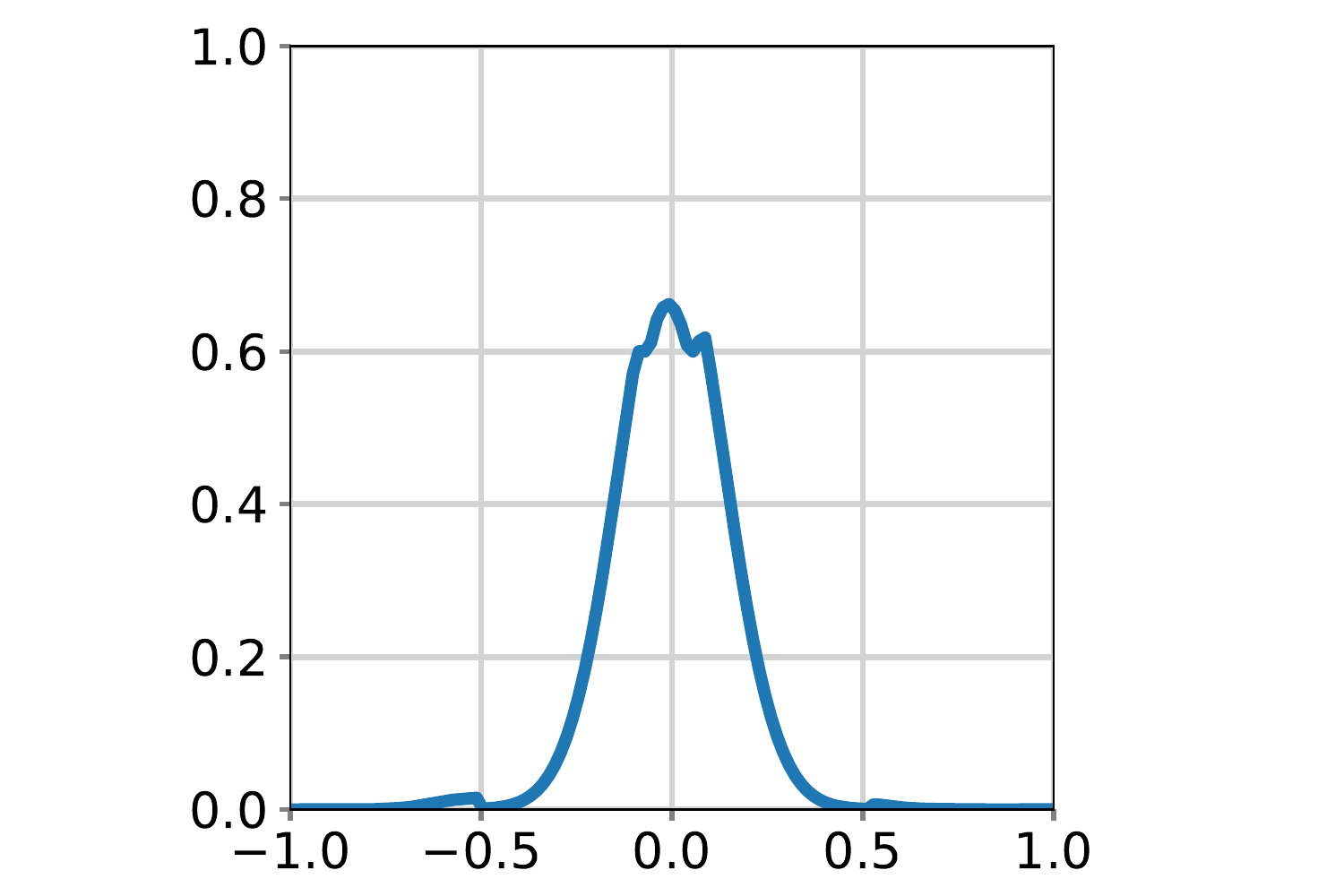}
\caption{Data-consistent}
\label{fig:modified-d-2}
\end{subfigure}
\caption{}
\label{fig:modified-2}
\end{figure}

\begin{figure}[!ht]
\centering
\begin{subfigure}[t]{0.24\textwidth}
\centering
\captionsetup{justification=centering}
\includegraphics[width=0.8\textwidth]{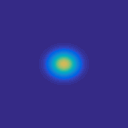}
\includegraphics[width=\textwidth, trim = {61 0 61 0}, clip]{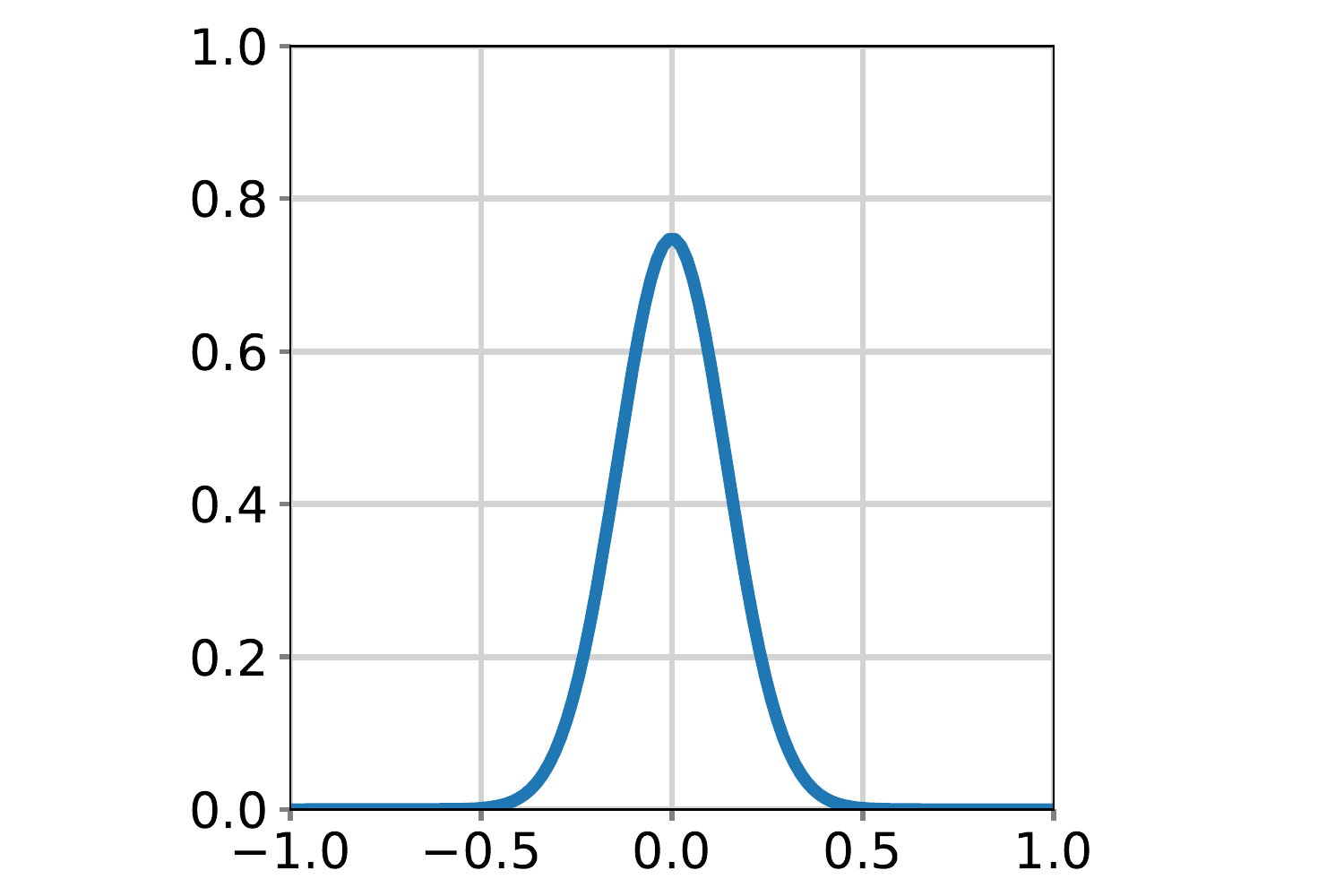}
\caption{Ground truth}
\label{fig:modified-x-3}
\end{subfigure}
\begin{subfigure}[t]{0.24\textwidth}
\centering
\captionsetup{justification=centering}
\includegraphics[width=0.8\textwidth]{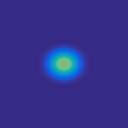}
\includegraphics[width=\textwidth, trim = {61 0 61 0}, clip]{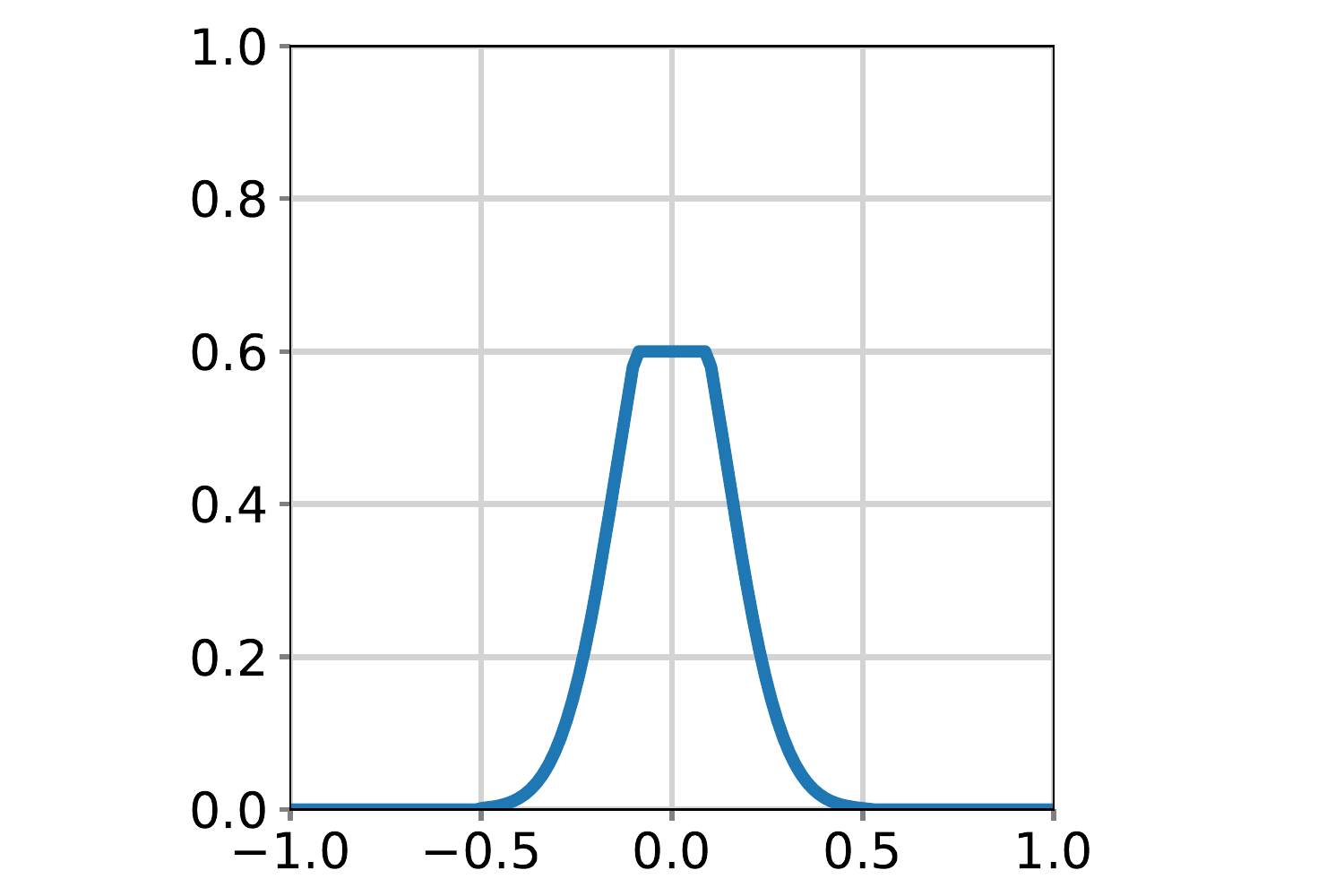}
\caption{Saturated}
\label{fig:modified-y-3}
\end{subfigure}
\begin{subfigure}[t]{0.24\textwidth}
\centering
\captionsetup{justification=centering}
\includegraphics[width=0.8\textwidth]{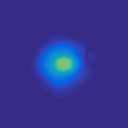}
\includegraphics[width=\textwidth, trim = {61 0 61 0}, clip]{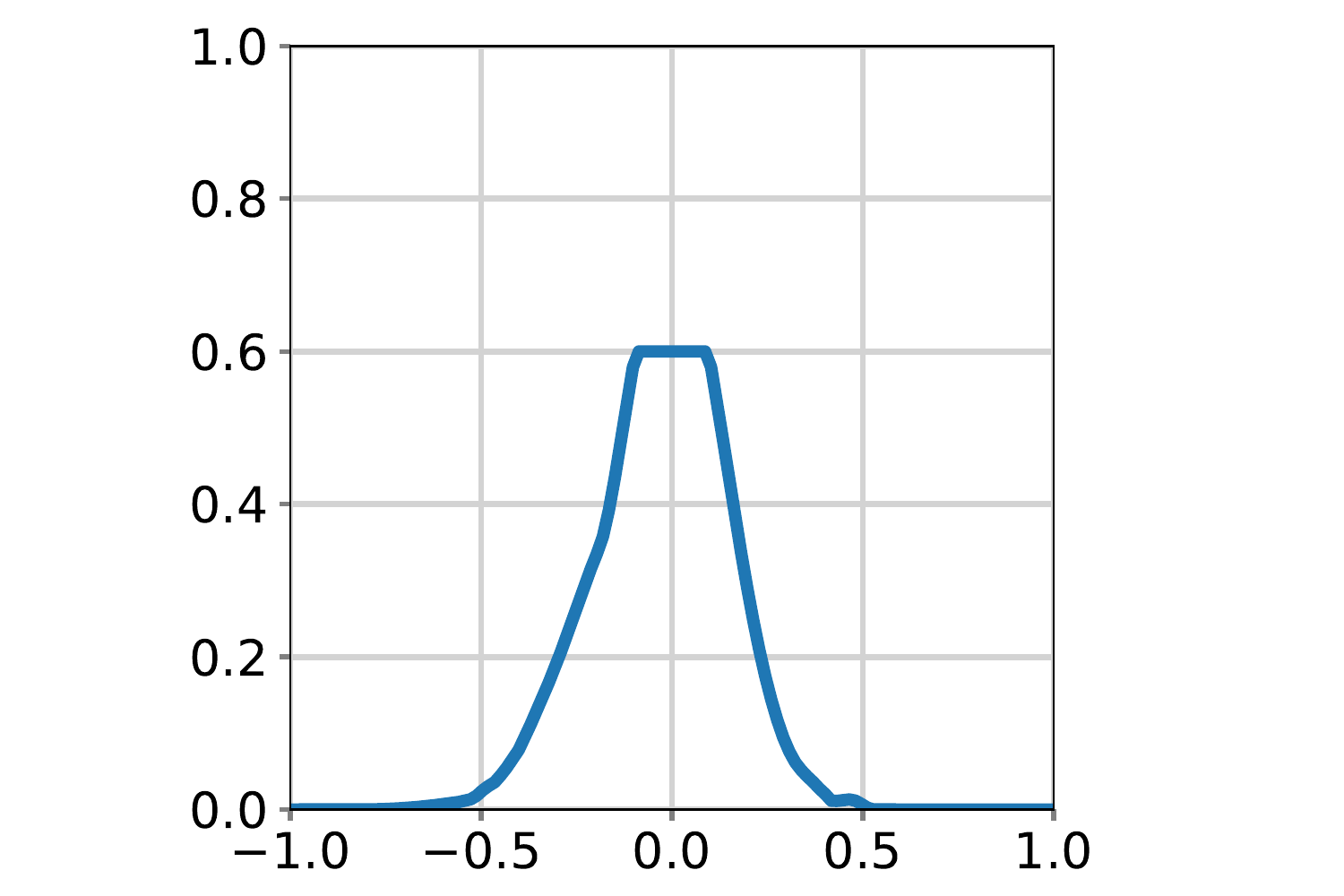}
\caption{U-Net}
\label{fig:modified-u-3}
\end{subfigure}
\begin{subfigure}[t]{0.24\textwidth}
\centering
\captionsetup{justification=centering}
\includegraphics[width=0.8\textwidth]{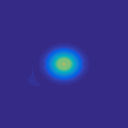}
\includegraphics[width=\textwidth, trim = {61 0 61 0}, clip]{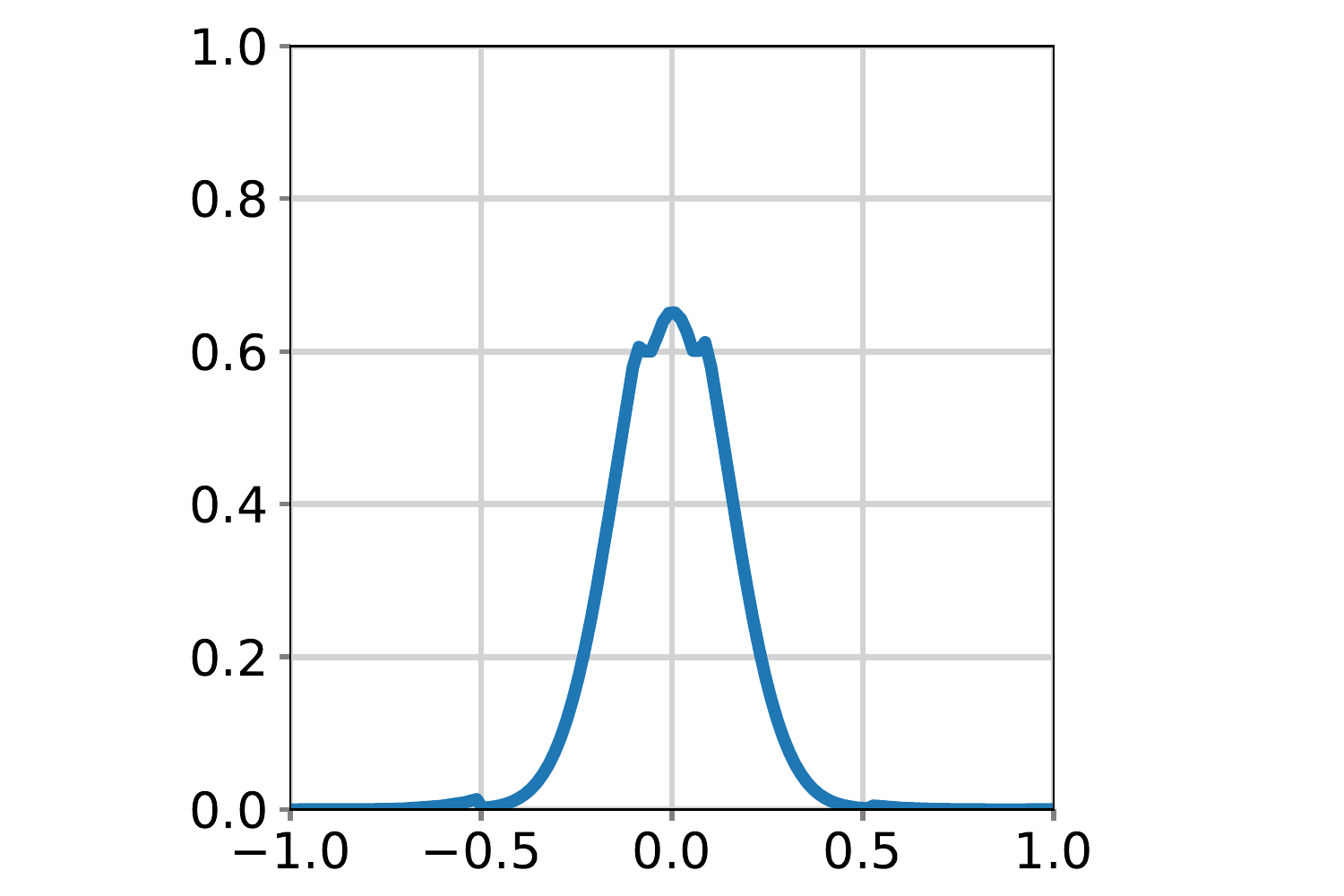}
\caption{Data-consistent}
\label{fig:modified-d-3}
\end{subfigure}
\caption{}
\label{fig:modified-3}
\end{figure}

\begin{figure}[!ht]
\centering
\begin{subfigure}[t]{0.24\textwidth}
\centering
\captionsetup{justification=centering}
\includegraphics[width=0.8\textwidth]{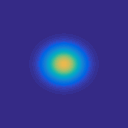}
\includegraphics[width=\textwidth, trim = {61 0 61 0}, clip]{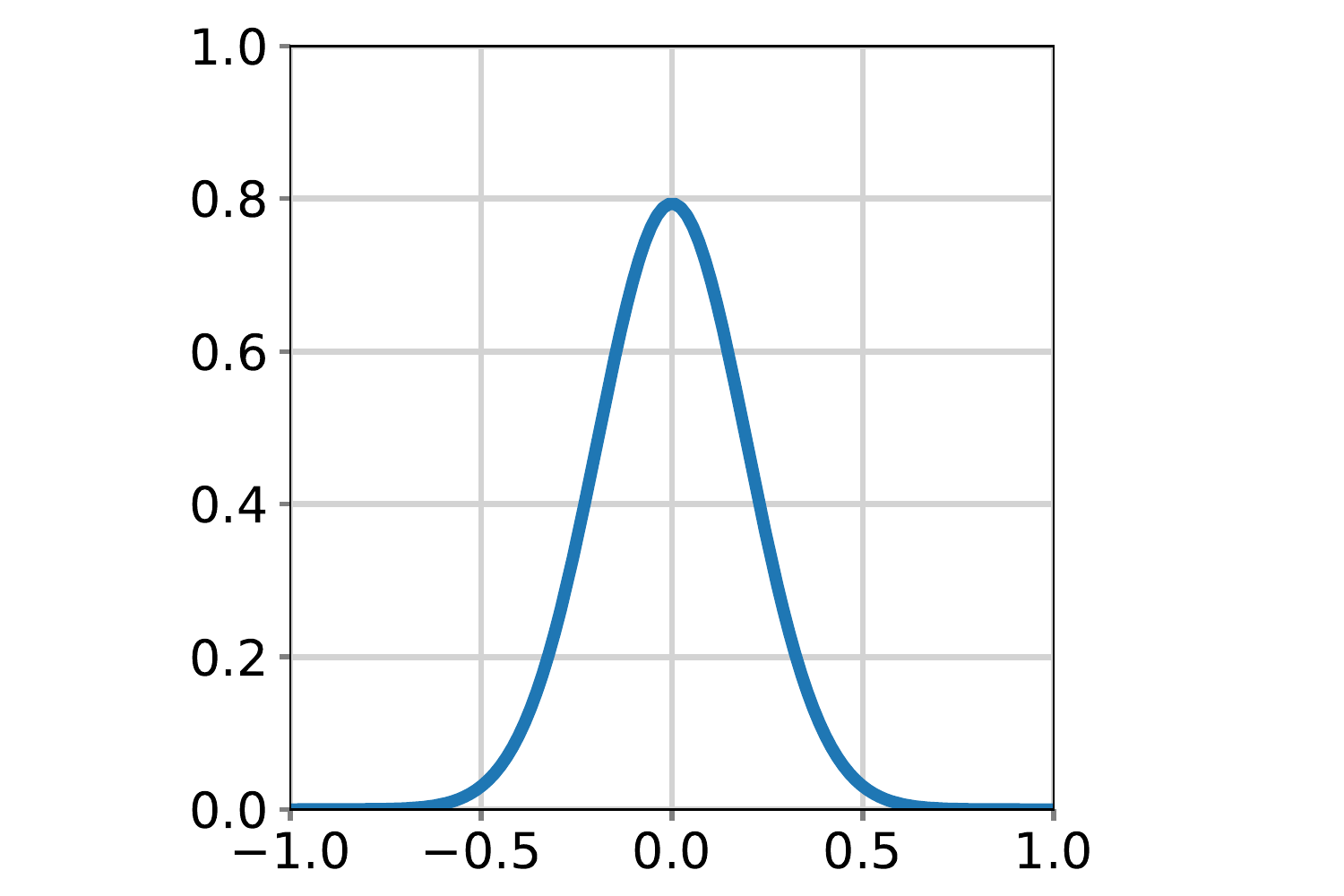}
\caption{Ground truth}
\label{fig:modified-x-1}
\end{subfigure}
\begin{subfigure}[t]{0.24\textwidth}
\centering
\captionsetup{justification=centering}
\includegraphics[width=0.8\textwidth]{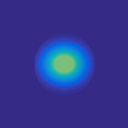}
\includegraphics[width=\textwidth, trim = {61 0 61 0}, clip]{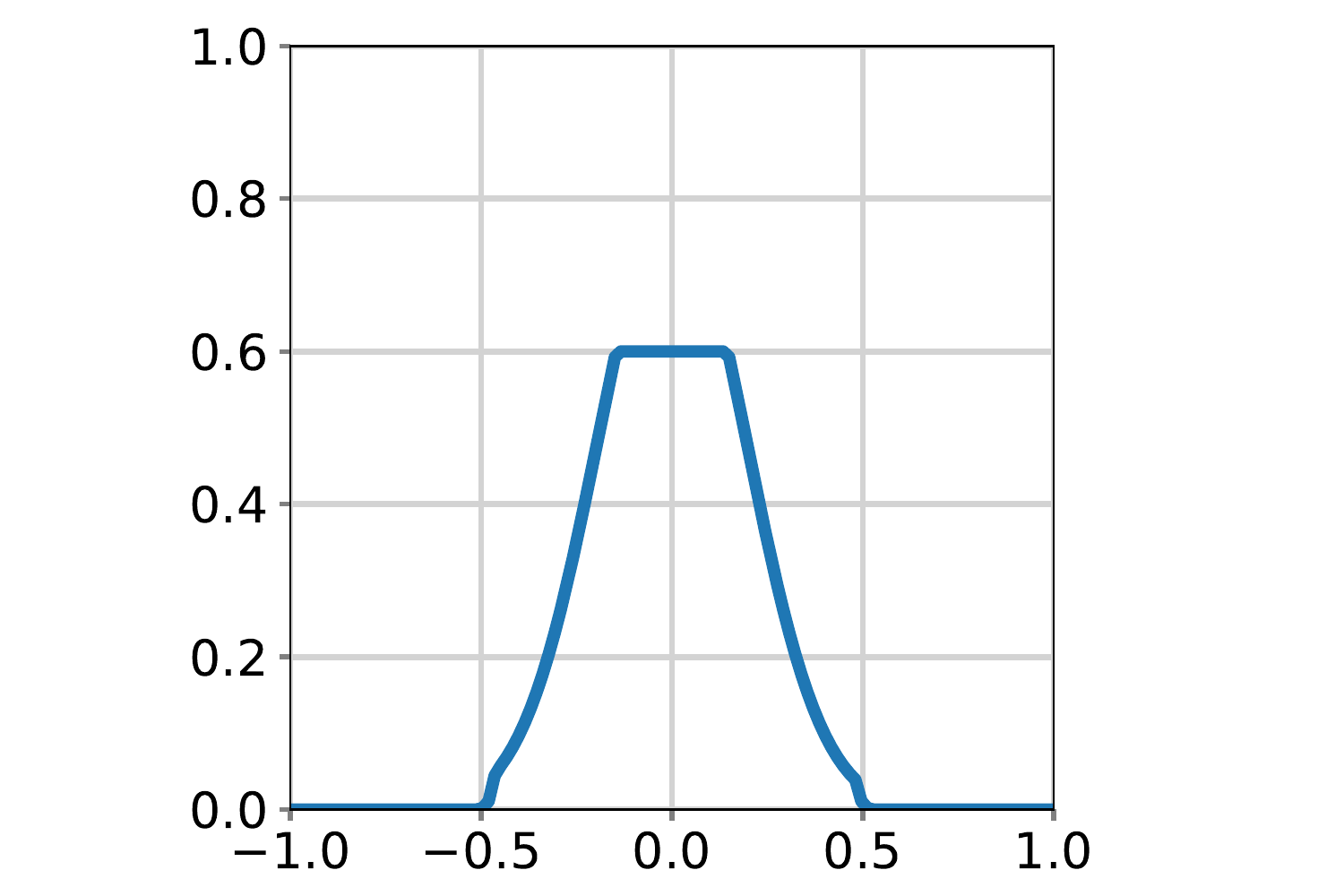}
\caption{Saturated}
\label{fig:modified-y-1}
\end{subfigure}
\begin{subfigure}[t]{0.24\textwidth}
\centering
\captionsetup{justification=centering}
\includegraphics[width=0.8\textwidth]{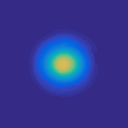}
\includegraphics[width=\textwidth, trim = {61 0 61 0}, clip]{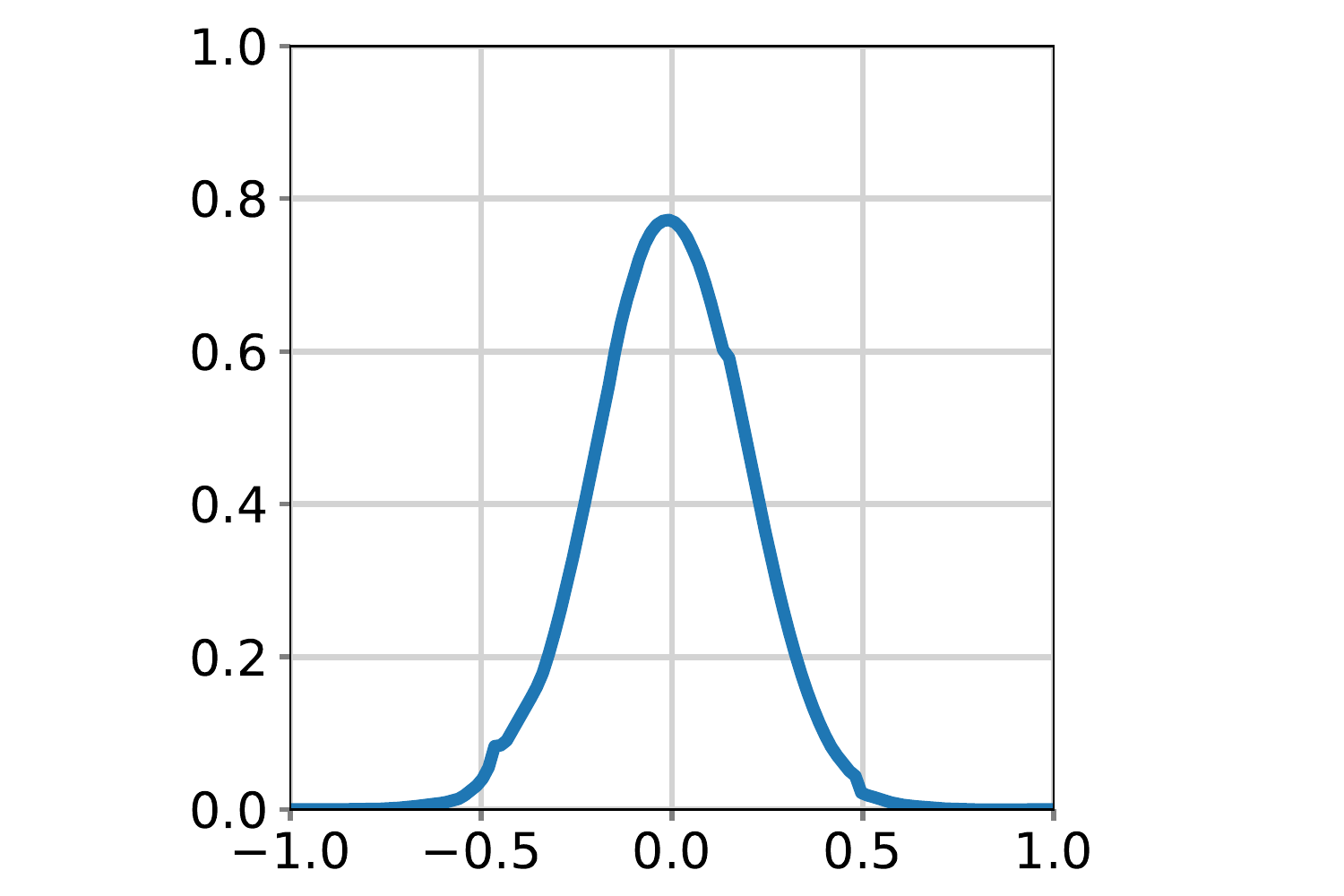}
\caption{U-Net}
\label{fig:modified-u-1}
\end{subfigure}
\begin{subfigure}[t]{0.24\textwidth}
\centering
\captionsetup{justification=centering}
\includegraphics[width=0.8\textwidth]{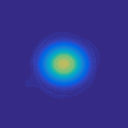}
\includegraphics[width=\textwidth, trim = {61 0 61 0}, clip]{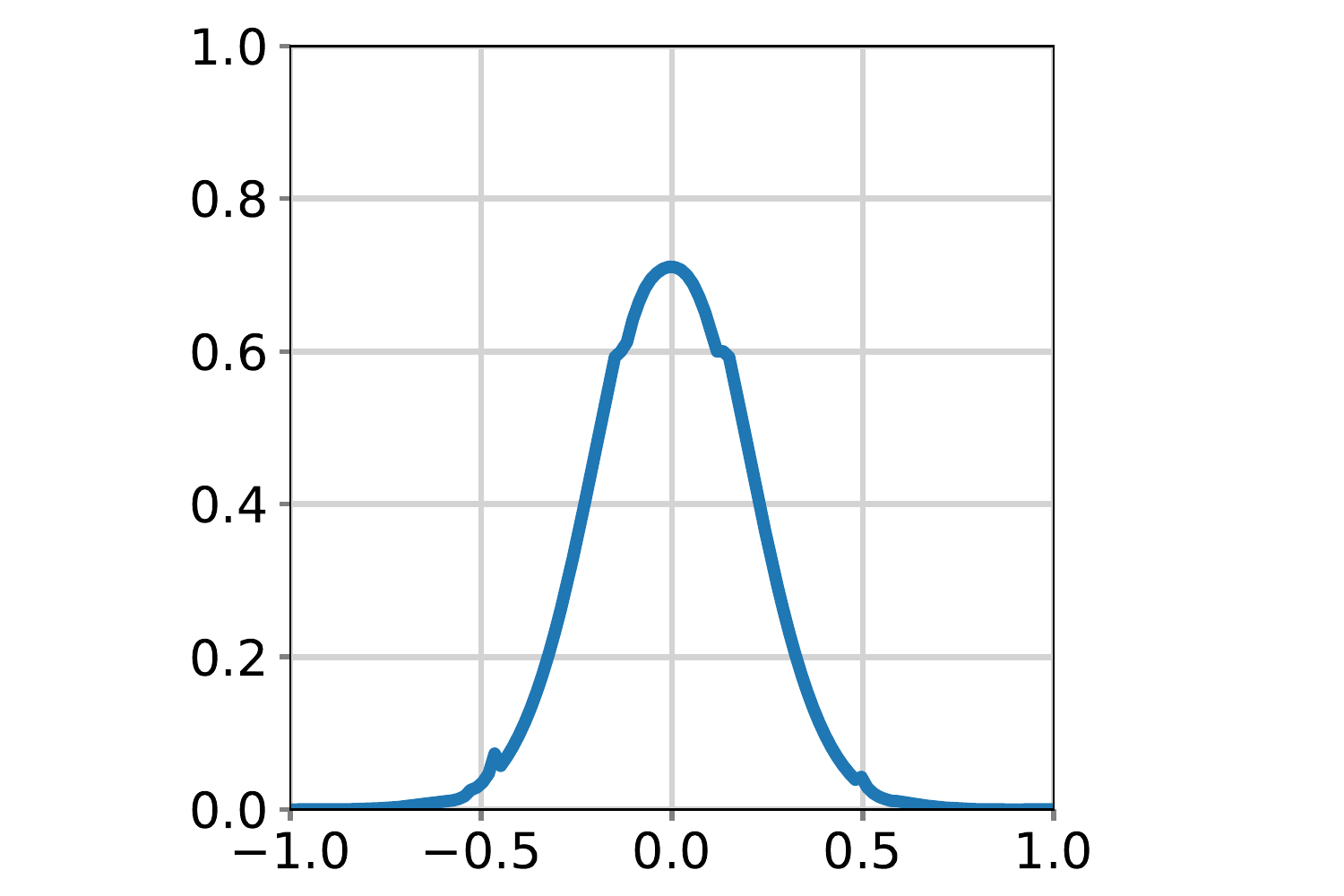}
\caption{Data-consistent}
\label{fig:modified-d-1}
\end{subfigure}
\caption{}
\label{fig:modified-1}
\end{figure}

\clearpage
\newpage
\section{Additional results for saturated Radon transform}\label{app:Radon}
In this appendix, three additional samples from the regular and the modified test set of human chest images are shown. The samples have been selected based on their PSNR values: we show the samples in the test set for which the data-consistent network yields the highest relative PSNR value (Figures \ref{fig:regular-1} and \ref{fig:modified-4}), a similar PSNR value (Figures \ref{fig:regular-2} and \ref{fig:modified-5}), and the lowest relative PSNR value (Figures \ref{fig:regular-3} and \ref{fig:modified-6}) when compared to the U-Nets.

\subsection{Regular test set}\label{app:regular}
$~$
\begin{figure}[!ht]
\centering
\begin{subfigure}[t]{0.193\textwidth}
\centering
\captionsetup{justification=centering}
\includegraphics[width=\textwidth, trim = {0 0 0 0}, clip]{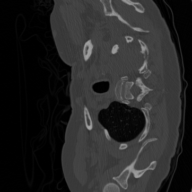}
\caption{\scalebox{0.95}[1.0]{Ground truth}}
\label{fig:regular-GT-1}
\end{subfigure}
\begin{subfigure}[t]{0.193\textwidth}
\centering
\captionsetup{justification=centering}
\includegraphics[width=\textwidth, trim = {0 0 0 0}, clip]{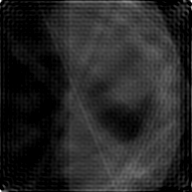}
\caption{\scalebox{0.95}[1.0]{Pseudo-inverse}\\{PSNR $=29.01$}}
\label{fig:regular-PI-1}
\end{subfigure}
\begin{subfigure}[t]{0.193\textwidth}
\centering
\captionsetup{justification=centering}
\includegraphics[width=\textwidth, trim = {0 0 0 0}, clip]{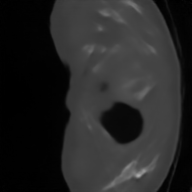}
\caption{\scalebox{0.95}[1.0]{One U-Net}\\{PSNR $=31.50$}}
\label{fig:regular-U1-1}
\end{subfigure}
\begin{subfigure}[t]{0.193\textwidth}
\centering
\captionsetup{justification=centering}
\includegraphics[width=\textwidth, trim = {0 0 0 0}, clip]{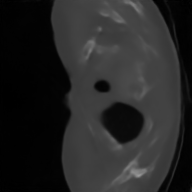}
\caption{\scalebox{0.95}[1.0]{Two U-Nets}\\{PSNR $=32.29$}}
\label{fig:regular-U2-1}
\end{subfigure}
\begin{subfigure}[t]{0.193\textwidth}
\centering
\captionsetup{justification=centering}
\includegraphics[width=\textwidth, trim = {0 0 0 0}, clip]{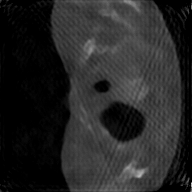}
\caption{\scalebox{0.95}[1.0]{Data-consistent}\\{PSNR $=34.10$}}
\label{fig:regular-DC-1}
\end{subfigure}
\caption{Sample for which the data-consistent PSNR value is relatively high compared to the U-Net PSNR values.}
\label{fig:regular-1}
\end{figure}

\begin{figure}[!ht]
\centering
\begin{subfigure}[t]{0.193\textwidth}
\centering
\captionsetup{justification=centering}
\includegraphics[width=\textwidth, trim = {0 0 0 0}, clip]{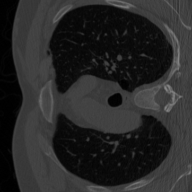}
\caption{\scalebox{0.95}[1.0]{Ground truth}}
\label{fig:regular-GT-2}
\end{subfigure}
\begin{subfigure}[t]{0.193\textwidth}
\centering
\captionsetup{justification=centering}
\includegraphics[width=\textwidth, trim = {0 0 0 0}, clip]{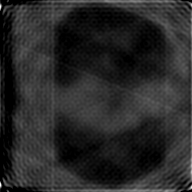}
\caption{\scalebox{0.95}[1.0]{Pseudo-inverse}\\{PSNR $=25.33$}}
\label{fig:regular-PI-2}
\end{subfigure}
\begin{subfigure}[t]{0.193\textwidth}
\centering
\captionsetup{justification=centering}
\includegraphics[width=\textwidth, trim = {0 0 0 0}, clip]{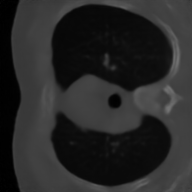}
\caption{\scalebox{0.95}[1.0]{One U-Net}\\{PSNR $=30.09$}}
\label{fig:regular-U1-2}
\end{subfigure}
\begin{subfigure}[t]{0.193\textwidth}
\centering
\captionsetup{justification=centering}
\includegraphics[width=\textwidth, trim = {0 0 0 0}, clip]{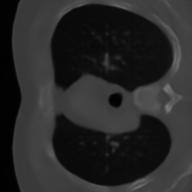}
\caption{\scalebox{0.95}[1.0]{Two U-Nets}\\{PSNR $=29.94$}}
\label{fig:regular-U2-2}
\end{subfigure}
\begin{subfigure}[t]{0.193\textwidth}
\centering
\captionsetup{justification=centering}
\includegraphics[width=\textwidth, trim = {0 0 0 0}, clip]{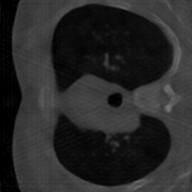}
\caption{\scalebox{0.95}[1.0]{Data-consistent}\\{PSNR $=30.09$}}
\label{fig:regular-DC-2}
\end{subfigure}
\caption{Sample for which the data-consistent PSNR value is approximately the same as the U-Net PSNR values.}
\label{fig:regular-2}
\end{figure}

\begin{figure}[!ht]
\centering
\begin{subfigure}[t]{0.193\textwidth}
\centering
\captionsetup{justification=centering}
\includegraphics[width=\textwidth, trim = {0 0 0 0}, clip]{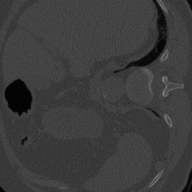}
\caption{\scalebox{0.95}[1.0]{Ground truth}}
\label{fig:regular-GT-3}
\end{subfigure}
\begin{subfigure}[t]{0.193\textwidth}
\centering
\captionsetup{justification=centering}
\includegraphics[width=\textwidth, trim = {0 0 0 0}, clip]{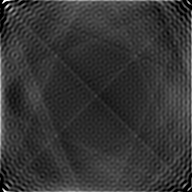}
\caption{\scalebox{0.95}[1.0]{Pseudo-inverse}\\{PSNR $=22.87$}}
\label{fig:regular-PI-3}
\end{subfigure}
\begin{subfigure}[t]{0.193\textwidth}
\centering
\captionsetup{justification=centering}
\includegraphics[width=\textwidth, trim = {0 0 0 0}, clip]{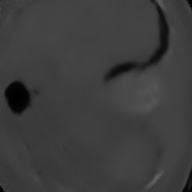}
\caption{\scalebox{0.95}[1.0]{One U-Net}\\{PSNR $=29.78$}}
\label{fig:regular-U1-3}
\end{subfigure}
\begin{subfigure}[t]{0.193\textwidth}
\centering
\captionsetup{justification=centering}
\includegraphics[width=\textwidth, trim = {0 0 0 0}, clip]{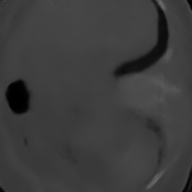}
\caption{\scalebox{0.95}[1.0]{Two U-Nets}\\{PSNR $=28.36$}}
\label{fig:regular-U2-3}
\end{subfigure}
\begin{subfigure}[t]{0.193\textwidth}
\centering
\captionsetup{justification=centering}
\includegraphics[width=\textwidth, trim = {0 0 0 0}, clip]{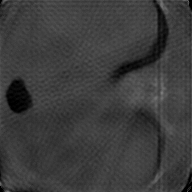}
\caption{\scalebox{0.95}[1.0]{Data-consistent}\\{PSNR $=24.52$}}
\label{fig:regular-DC-3}
\end{subfigure}
\caption{Sample for which the data-invariant PSNR value is relatively low compared to the U-Net PSNR values.}
\label{fig:regular-3}
\end{figure}

\newpage
\subsection{Modified test set}\label{app:modified}
The ground truth images in the modified test set contain more regions which are non-constant, as opposed to the regular test set, which consists of piecewise constant images. The data-consistent network is better able to deal with these modifications in the images, as can be seen particularly well in Figure \ref{fig:modified-4}: The U-Nets create a piecewise constant dark structure in the middle, while the data-consistent network keeps it more smooth. 
\begin{figure}[!ht]
\centering
\begin{subfigure}[t]{0.193\textwidth}
\centering
\captionsetup{justification=centering}
\includegraphics[width=\textwidth, trim = {0 0 0 0}, clip]{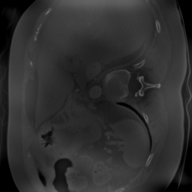}
\caption{\scalebox{0.95}[1.0]{Ground truth}}
\label{fig:modified-GT-4}
\end{subfigure}
\begin{subfigure}[t]{0.193\textwidth}
\centering
\captionsetup{justification=centering}
\includegraphics[width=\textwidth, trim = {0 0 0 0}, clip]{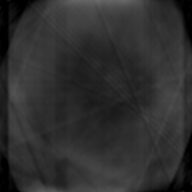}
\caption{\scalebox{0.95}[1.0]{Pseudo-inverse}\\{PSNR $=34.76$}}
\label{fig:modified-PI-4}
\end{subfigure}
\begin{subfigure}[t]{0.193\textwidth}
\centering
\captionsetup{justification=centering}
\includegraphics[width=\textwidth, trim = {0 0 0 0}, clip]{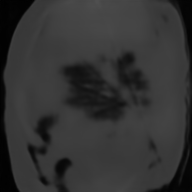}
\caption{\scalebox{0.95}[1.0]{One U-Net}\\{PSNR $=25.96$}}
\label{fig:modified-U1-4}
\end{subfigure}
\begin{subfigure}[t]{0.193\textwidth}
\centering
\captionsetup{justification=centering}
\includegraphics[width=\textwidth, trim = {0 0 0 0}, clip]{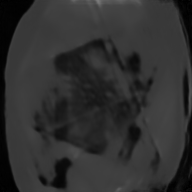}
\caption{\scalebox{0.95}[1.0]{Two U-Nets}\\{PSNR $=27.61$}}
\label{fig:modified-U2-4}
\end{subfigure}
\begin{subfigure}[t]{0.193\textwidth}
\centering
\captionsetup{justification=centering}
\includegraphics[width=\textwidth, trim = {0 0 0 0}, clip]{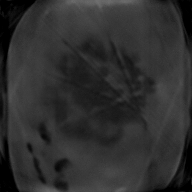}
\caption{\scalebox{0.95}[1.0]{Data-consistent}\\{PSNR $=33.61$}}
\label{fig:modified-DC-4}
\end{subfigure}
\caption{Sample for which the data-invariant PSNR value is relatively high compared to the U-Net PSNR values.}
\label{fig:modified-4}
\end{figure}

\begin{figure}[!ht]
\centering
\begin{subfigure}[t]{0.193\textwidth}
\centering
\captionsetup{justification=centering}
\includegraphics[width=\textwidth, trim = {0 0 0 0}, clip]{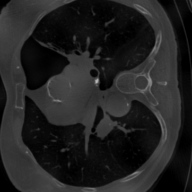}
\caption{\scalebox{0.95}[1.0]{Ground truth}}
\label{fig:modified-GT-5}
\end{subfigure}
\begin{subfigure}[t]{0.193\textwidth}
\centering
\captionsetup{justification=centering}
\includegraphics[width=\textwidth, trim = {0 0 0 0}, clip]{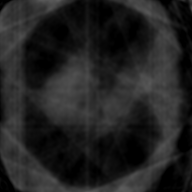}
\caption{\scalebox{0.95}[1.0]{Pseudo-inverse}\\{PSNR $=28.43$}}
\label{fig:modified-PI-5}
\end{subfigure}
\begin{subfigure}[t]{0.193\textwidth}
\centering
\captionsetup{justification=centering}
\includegraphics[width=\textwidth, trim = {0 0 0 0}, clip]{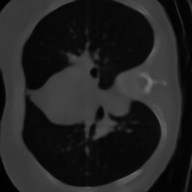}
\caption{\scalebox{0.95}[1.0]{One U-Net}\\{PSNR $=31.50$}}
\label{fig:modified-U1-5}
\end{subfigure}
\begin{subfigure}[t]{0.193\textwidth}
\centering
\captionsetup{justification=centering}
\includegraphics[width=\textwidth, trim = {0 0 0 0}, clip]{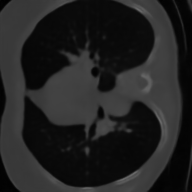}
\caption{\scalebox{0.95}[1.0]{Two U-Nets}\\{PSNR $=31.52$}}
\label{fig:modified-U2-5}
\end{subfigure}
\begin{subfigure}[t]{0.193\textwidth}
\centering
\captionsetup{justification=centering}
\includegraphics[width=\textwidth, trim = {0 0 0 0}, clip]{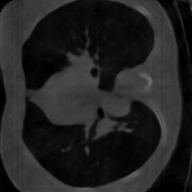}
\caption{\scalebox{0.95}[1.0]{Data-consistent}\\{PSNR $=31.47$}}
\label{fig:modified-DC-5}
\end{subfigure}
\caption{Sample for which the data-invariant PSNR value is approximately the same as the U-Net PSNR values.}
\label{fig:modified-5}
\end{figure}

\begin{figure}[!ht]
\centering
\begin{subfigure}[t]{0.193\textwidth}
\centering
\captionsetup{justification=centering}
\includegraphics[width=\textwidth, trim = {0 0 0 0}, clip]{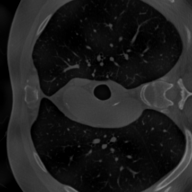}
\caption{\scalebox{0.95}[1.0]{Ground truth}}
\label{fig:modified-GT-6}
\end{subfigure}
\begin{subfigure}[t]{0.193\textwidth}
\centering
\captionsetup{justification=centering}
\includegraphics[width=\textwidth, trim = {0 0 0 0}, clip]{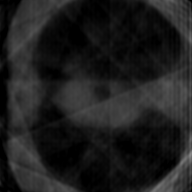}
\caption{\scalebox{0.95}[1.0]{Pseudo-inverse}\\{PSNR $=28.62$}}
\label{fig:modified-PI-6}
\end{subfigure}
\begin{subfigure}[t]{0.193\textwidth}
\centering
\captionsetup{justification=centering}
\includegraphics[width=\textwidth, trim = {0 0 0 0}, clip]{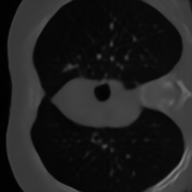}
\caption{\scalebox{0.95}[1.0]{One U-Net}\\{PSNR $=31.05$}}
\label{fig:modified-U1-6}
\end{subfigure}
\begin{subfigure}[t]{0.193\textwidth}
\centering
\captionsetup{justification=centering}
\includegraphics[width=\textwidth, trim = {0 0 0 0}, clip]{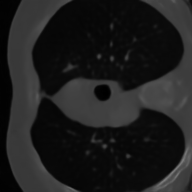}
\caption{\scalebox{0.95}[1.0]{Two U-Nets}\\{PSNR $=31.30$}}
\label{fig:modified-U2-6}
\end{subfigure}
\begin{subfigure}[t]{0.193\textwidth}
\centering
\captionsetup{justification=centering}
\includegraphics[width=\textwidth, trim = {0 0 0 0}, clip]{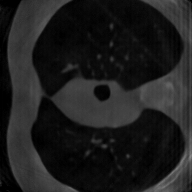}
\caption{\scalebox{0.95}[1.0]{Data-consistent}\\{PSNR $=30.89$}}
\label{fig:modified-DC-6}
\end{subfigure}
\caption{Sample for which the data-invariant PSNR value is relatively low compared to the U-Net PSNR values.}
\label{fig:modified-6}
\end{figure}

\end{document}